\documentclass{amsart}

\usepackage{amssymb, hyperref, url, xcolor, tikz-cd}

\newtheorem{theorem}{Theorem}[section]
\newtheorem{proposition}[theorem]{Proposition}
\newtheorem{lemma}[theorem]{Lemma}
\newtheorem{corollary}[theorem]{Corollary}
\newtheorem{fact}[theorem]{Fact}

\theoremstyle{definition}
\newtheorem{definition}[theorem]{Definition}

\theoremstyle{remark}
\newtheorem{remark}[theorem]{Remark}
\newtheorem{example}[theorem]{Example}

\numberwithin{equation}{section}

\def\Ind{\setbox0=\hbox{$x$}\kern\wd0\hbox to 0pt{\hss$\mid$\hss} \lower.9\ht0\hbox to 0pt{\hss$\smile$\hss}\kern\wd0} 

\def\Notind{\setbox0=\hbox{$x$}\kern\wd0\hbox to 0pt{\mathchardef \nn=12854\hss$\nn$\kern1.4\wd0\hss}\hbox to 0pt{\hss$\mid$\hss}\lower.9\ht0 \hbox to 0pt{\hss$\smile$\hss}\kern\wd0}

\def \U {\mathcal U}

\def \eq {\operatorname{eq}}
\def \tp {tp}
\def \defin {\operatorname{def}}
\def \HH {\operatorname{H}}

\def \diff {\operatorname{diff}}
\def \alg {\operatorname{alg}}
\def \aut {\text{Aut}}
\def \mcl {\operatorname{mcl}}
\def \acl {\operatorname{acl}}
\def \dcl {\operatorname{dcl}}

\def \DCF {\operatorname{DCF}}

\title[Definable Galois cohomology]{On definable Galois theory and definable Galois cohomology in the totally transcendental setting}

\author{David Meretzky}

\address{David Meretzky\\ Universidad de Los Andes\\ Departamento de Matematicas\\ Carrera 1 \#18A - 12\\
Edificio H \\ Bogotá - Colombia 111711} 
\email{davmicmar@gmail.com}
\date{\today}
\subjclass[2020]{03C60, 12H05, 12G05}
\keywords{differential fields, model theory, Picard-Vessiot theory, Galois cohomology}

\begin{document}

\begin{abstract}
    	This paper gives results which relate the definable Galois theory of \cite{OmarAnand} to the definable Galois cohomology of \cite{pillay1997} in the setting of a totally transcendental first order theory. Firstly, we show that under some common assumptions the definable Galois cohomology of the extrinsic definable Galois group associated to fixed internality data classifies the number of definable Galois extensions inside a copy of the prime model. This generalizes a well-known result for Picard-Vessiot differential Galois theory shown originally via tannakian methods \cite{DeligneMilne2022}. 

        We then collate some triviality results for differential Galois cohomology \cite{PILLAY2017809} and recent results on iterated Picard-Vessiot extensions \cite{magid2022completepicardvessiotclosure} \cite{meretzky2026galoistheoryautomorphismgroups} to give colimit formulas for differential Galois cohomology. Precisely, for an ordinary differential field $K$ of characteristic $0$ and a linear differential algebraic group $G$ over $K$, the differential Galois cohomology $\text{H}^1_{\delta}(K,G)$, is given by a colimit over the family of normal closures of iterated Picard-Vessiot extensions of $K$ of finite type.  We then propose a new notion of boundedness for a differential field.
        
        We finally show that the minimal closure of a set of parameters $A$, the intersection of all elementary embeddings of a copy of the prime model over $A$ into itself, contains all of the definable Galois cohomological information for definable groups over $A$ satisfying also some strong but common conditions. Lastly, we give some colimit formulas for definable Galois cohomology and propose a model-theoretic definition of a bounded set of parameters. 
\end{abstract}

\maketitle

\section{Introduction}

In this paper we give two kinds of results which relate the definable Galois cohomology of \cite{pillay1997} to the definable Galois theory of \cite{OmarAnand} in the setting of the prime model of a totally transcendental theory. Both sets of results draw inspiration from the theory of differentially closed fields of characteristic $0$, where we look at interactions between the Picard-Vessiot theory and (constrained) differential Galois cohomology as presented in \cite{kolchin1985differential}. In this setting we give a new colimit formula for differential Galois cohomology and propose a new definition of boundedness for a differential field.  

Our differential algebraic results continue a line of work \cite{pillay_1998_DGI} 
\cite{Pillay2004}
\cite{PILLAY2017809}  \cite{Minchenko_2019} 
\cite{Chatzidakis2017GeneralizedPE}
\cite{AnandDavidComm}  giving fundamental triviality and boundedness results for Galois cohomology in the setting of a differential field $K$ of characteristic $0$. We begin by reviewing the basic facts in the algebraic setting which this work generalizes. 

Boundedness is an important field arithmetic property which plays a role in existence results for Picard-Vessiot extensions over differential fields with non-algebraically closed fields of constants \cite{KamenskyPillay}. We recall the definition: A field $F$ is bounded if it has finitely many Galois extensions of each finite degree.

Galois cohomology is a fundamental tool in algebraic number theory \cite{Serre1979}. Let $G$ be an algebraic group defined over a perfect field $F$. Then usual algebraic Galois cohomology $\HH^1_{\alg}(F,G)$ is a pointed set classifying $F$-isomorphism classes of algebraic torsors for $G$. The basepoint corresponds to the class of $G$ considered as a torsor. 

There are a pair of fundamental relationships between torsors and Galois extensions which provide intuition: Firstly, a Galois extension of a field $F$ gives rise to a torsor for the Galois group. Secondly, any torsor for an algebraic group becomes isomorphic to the group acting on itself over some finite Galois extension. 

The close connection between torsors and Galois extensions tells us first that Galois cohomology can be understood as a limit of finite extensions:

\begin{fact}\label{fact: alg colim}\cite{Serre1979}
    Let $F$ be a perfect field and let $G$ be an algebraic group defined over $F$. Then $$\varinjlim \HH^1_{\alg}(E/F,G(E)) \cong \HH^1_{\alg}(F,G)$$ where the limit is taken over the directed system of finite Galois extensions $E$ of $F$ and $\HH^1_{\alg}(E/F,G(E))$ denotes the isomorphism classes of $G$-torsors which have an $E$-point. 
\end{fact}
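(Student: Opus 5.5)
We prove the colimit formula by identifying $\HH^1_{\alg}(F,G)$ with the continuous cohomology $\HH^1(\mathrm{Gal}(F^{\alg}/F),G(F^{\alg}))$ and then using that a continuous cocycle on a profinite group factors through a finite quotient. Throughout, $F^{\alg}$ is an algebraic closure of $F$, which is Galois over $F$ since $F$ is perfect, and $\Gamma=\mathrm{Gal}(F^{\alg}/F)$ is the inverse limit of the finite groups $\mathrm{Gal}(E/F)$.

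First we set up the transition maps. For finite Galois $F\subseteq E\subseteq E'$, the map $\HH^1(\mathrm{Gal}(E/F),G(E))\to \HH^1(\mathrm{Gal}(E'/F),G(E'))$ is inflation: precompose a cocycle with $\mathrm{Gal}(E'/F)\to\mathrm{Gal}(E/F)$ and compose with $G(E)\hookrightarrow G(E')$. Cocycles and coboundaries are preserved, and inflations compose correctly, so these maps form a directed system.

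Next we identify each stage with torsors. Given a $G$-torsor $P$ over $F$ with a point $p\in P(E)$, each $\sigma\in\mathrm{Gal}(E/F)$ gives a unique $a_\sigma\in G(E)$ with $\sigma(p)=p\,a_\sigma$. Then $a$ is a $1$-cocycle, and changing $p$ changes $a$ by a coboundary. Conversely, a cocycle twists the trivial torsor by Galois descent along $E/F$; this is effective because $G$ is quasi-projective, or more generally because torsors under algebraic groups are representable. These two constructions are mutually inverse on isomorphism classes, which justifies the description of $\HH^1_{\alg}(E/F,G(E))$ as classes of torsors with an $E$-point. Inflation then corresponds to regarding an $E$-point as an $E'$-point, so the system is compatible.

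The direct limit therefore maps to $\HH^1_{\alg}(F,G)$, the set of all torsor classes. For surjectivity, we need every torsor $P$ over $F$ to have a point over some finite Galois $E$. The variety $P$ is nonempty over $F^{\alg}$, so by the Nullstellensatz it has an $F^{\alg}$-point. That point has coordinates in a finitely generated, hence finite, subextension, and we take $E$ to be its Galois closure; this uses perfectness of $F$. For injectivity, suppose two classes at stage $E$ become equal in the limit. Then they become equal at some stage $E''$, meaning the torsors are $F$-isomorphic, and $F$-isomorphism of torsors does not depend on the stage. Equivalently, in cocycle language, the coboundary witnessing equality lies in $G(E'')$ for some finite $E''$, since a single element has coordinates in a finite extension.

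The step we expect to be the main obstacle is the torsor–cocycle dictionary at a fixed finite level, specifically the effectivity of Galois descent needed to build a torsor from a cocycle. We would cite Serre \cite{Serre1979} or the standard Weil descent argument for quasi-projective varieties rather than reprove it. The remaining steps are formal limit bookkeeping.
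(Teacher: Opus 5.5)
Your proof is correct, but it takes a different route from the one behind this Fact. The paper gives no proof; it quotes \cite{Serre1979}. The argument it transports to the definable setting is Lemma \ref{lem: colim}, which, like Serre's, is purely cocycle-theoretic and rests on two points:
\begin{enumerate}
\item \emph{Surjectivity.} Every cocycle comes from a finite stage. A continuous cocycle $\mathrm{Gal}(F^{\alg}/F)\to G(F^{\alg})$ factors through some $\mathrm{Gal}(E/F)$ with values in $G(E)$; in the paper, the definability data lies in some $B_i$.
\item \emph{Injectivity.} Inflation is injective on $\HH^1$; in the paper this is Theorem \ref{thm: SES}. Here, if $g\in G(F^{\alg})$ makes two inflated cocycles cohomologous, then $g$ is fixed by $\mathrm{Gal}(F^{\alg}/E)$, so $g\in G(E)$.
\end{enumerate}
You announce this plan in your first paragraph but then run a torsor argument instead, taking surjectivity from the geometric fact that a torsor has a point over a finite Galois extension.

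That torsor argument is the natural proof of the statement as literally phrased. Notice, though, that once $\HH^1_{\alg}(E/F,G(E))$ is defined as the torsor classes having an $E$-point, the stages are subsets of $\HH^1_{\alg}(F,G)$ and the transition maps are inclusions. The colimit is therefore their union, injectivity is automatic, and your surjectivity paragraph is the whole proof.

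In particular, the step you single out as the main obstacle, effectivity of Galois descent, is used by neither route. It is needed only to match the torsor description with $\HH^1(\mathrm{Gal}(E/F),G(E))$.

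What each route buys: yours isolates the single geometric input (Nullstellensatz plus perfectness, which gives Galois closures). The cocycle route is formal, uses no geometry, and is the form the paper generalizes.

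A small wording fix: in your injectivity step, the hypothesis should be that the two classes have the same image in $\HH^1_{\alg}(F,G)$. Saying they ``become equal in the limit'' makes the step circular as written.
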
 

It also tells us that algebraic closure can be detected cohomologically:

\begin{fact}\cite{Serre1979}\label{fact: alg colim}
    A perfect field $F$ is algebraically closed if and only if $\HH^1_{\alg}(F,G) \cong 1$ for all linear algebraic groups $G$ defined over $F$. 
\end{fact}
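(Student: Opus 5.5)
Both directions go through the identification $\HH^1_{\alg}(F,G)\cong \HH^1(\operatorname{Gal}(\bar F/F), G(\bar F))$ of torsor classes with continuous cocycle classes. This identification comes from the colimit description in the first Fact: a torsor acquires a point over some finite Galois extension $E$, and it is then recorded by the cocycle $\sigma\mapsto g^{-1}\sigma(g)$.

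\emph{Forward direction.} If $F$ is algebraically closed, then every nonempty variety over $F$ (in particular every $G$-torsor) has an $F$-rational point, by the Nullstellensatz. A torsor $X$ with a point $x\in X(F)$ is isomorphic to $G$ via $g\mapsto g\cdot x$. Hence $\HH^1_{\alg}(F,G)$ is trivial; equivalently, the Galois group is trivial, so every directed-system term in the first Fact is trivial.

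\emph{Converse.} Assume $F$ is perfect and not algebraically closed. Then $F$ has a nontrivial finite Galois extension $E/F$ with group $\Gamma$ of order $n>1$. I will exhibit a linear algebraic group with a nontrivial torsor. The cleanest choice is the finite constant group $G=\Gamma$, viewed as a (zero-dimensional, hence linear) algebraic group over $F$ with trivial Galois action. Since $\operatorname{Gal}(\bar F/F)$ acts trivially on $\Gamma$, we have
\[
\HH^1(\operatorname{Gal}(\bar F/F),\Gamma)=\operatorname{Hom}_{\mathrm{cont}}(\operatorname{Gal}(\bar F/F),\Gamma)/\text{conjugation}.
\]
The quotient map $\operatorname{Gal}(\bar F/F)\to\Gamma$ is a surjective homomorphism. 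It is therefore not conjugate to the trivial homomorphism, and so gives a nontrivial class. Concretely, the corresponding torsor is $\operatorname{Spec}E$ with its $\Gamma$-action. It has no $F$-point because $E\neq F$, so it is not isomorphic to $\Gamma$. This is precisely the ``Galois extension gives a torsor for the Galois group'' intuition from the introduction.

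If one prefers connected groups, an alternative is to reduce to a nontrivial cyclic piece and use $\HH^1(F,\mu_p)\cong F^\times/F^{\times p}$ or Artin--Schreier. However, this needs a field $F$ lacking certain roots, which is not guaranteed. So I will stick with finite constant groups.

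The main obstacle is the bookkeeping in the identification of torsor classes with cocycle classes. In particular, one must check that a nontrivial cocycle class really corresponds to a torsor that is not isomorphic to $G$, which amounts to Galois descent for the affine variety. Perfectness ensures that $\bar F/F$ is Galois, so that this descent applies. Given that identification, both directions are short.
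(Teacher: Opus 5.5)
The paper gives no proof of this statement; it is quoted from Serre and restated later as Fact~\ref{fact: alg coh triv}, where condition (2) already restricts to finite algebraic groups. Your argument is correct and is the standard one, and it follows the route that restatement implies:
\begin{itemize}
\item[(i)] when $F=\bar F$, the Nullstellensatz gives an $F$-point on every torsor;
\item[(ii)] conversely, a nontrivial finite Galois extension $E/F$ with group $\Gamma$ gives a nontrivial class in $\HH^1_{\alg}(F,\Gamma)$, namely the surjection $\operatorname{Gal}(\bar F/F)\to\Gamma$. This class is realized by the torsor $\operatorname{Spec}E$, which has no $F$-point.
\end{itemize}

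One correction to your side remark about connected groups. The groups in Kummer and Artin--Schreier theory ($\mu_p$ with $p\neq\operatorname{char}F$, and $\mathbb{Z}/p$) are finite \'etale, not connected. More importantly, no connected alternative can exist. A finite field $\mathbb{F}_q$ is perfect but not algebraically closed, and Lang's theorem gives $\HH^1_{\alg}(\mathbb{F}_q,G)=1$ for every connected $G$. So the statement would be false if ``linear algebraic group'' were read as ``connected linear algebraic group''. Using a disconnected finite group such as your constant $\Gamma$ is therefore forced, not merely the cleanest choice.
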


Lastly, it tells us that boundeness can be characterized cohomologically: 

\begin{fact}\label{fact: alg bdd}\cite{Serre1979}
    Let $F$ be a perfect field. The following are equivalent
    
    \begin{enumerate}
        \item $F$ is bounded. 
        \item For any linear algebraic group $G$ defined over $F$, $\HH^1_{\alg}(F,G)$ is finite. 
        \item For any linear algebraic group $G$ defined over $F$ there is a finite Galois extension $E$ over $F$ such that $\HH^1_{\alg}(E/F,G(E))  \cong  \HH^1_{\alg}(F,G).$
    \end{enumerate}
\end{fact}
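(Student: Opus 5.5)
We prove the cycle $(1)\Rightarrow(2)\Rightarrow(3)\Rightarrow(1)$, with all the substance in $(1)\Rightarrow(2)$. Throughout we use the standard description $\HH^1_{\alg}(F,G)=H^1(\mathrm{Gal}(\bar F/F),G(\bar F))$, valid because $F$ is perfect, together with the colimit formula $\varinjlim_E \HH^1_{\alg}(E/F,G(E))\cong \HH^1_{\alg}(F,G)$ from the first Fact above.

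For $(2)\Rightarrow(3)$: by the colimit formula, every class of $\HH^1_{\alg}(F,G)$ is already split by some finite Galois extension. If there are finitely many classes, we take the compositum $E$ of the finitely many splitting fields. Since inflation maps are injective on $H^1$, $\HH^1_{\alg}(E/F,G(E))\to\HH^1_{\alg}(F,G)$ is then a bijection.

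For $(3)\Rightarrow(1)$: we apply $(3)$ to the constant finite group $G=S_n$. There $\HH^1_{\alg}(F,S_n)=\mathrm{Hom}_{\mathrm{cont}}(\mathrm{Gal}(\bar F/F),S_n)/\text{conj}$ classifies étale $F$-algebras of degree $n$. Every Galois extension of degree $n$ gives such an algebra, and its class is split by $E$ only if the extension lies inside $E$. Hence by $(3)$ all Galois extensions of degree $n$ lie in one finite extension $E$, so there are only finitely many of them, and $F$ is bounded.

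The main obstacle is $(1)\Rightarrow(2)$, which is Serre's theorem that fields of type (F) have finite $\HH^1$. I would follow Serre's dévissage in four steps.

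\emph{Finite groups.} If $G$ is finite étale, a cocycle is continuous, so it factors through the Galois group of an extension of degree at most $|G|\cdot|\text{splitting field of } G|$. By boundedness there are finitely many such extensions, and each contributes finitely many cocycles, since these are maps from a finite group to a finite group.

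\emph{Reduction to finite-by-torus.} For general linear $G$, I would first pass to the reductive quotient. The unipotent radical has trivial $\HH^1$ and so do all its twists (characteristic $0$ suffices for perfect fields here, or one uses that unipotent groups over perfect fields are split). Next, pick a maximal torus $T$ defined over $F$ with normalizer $N$. The known fact that $\HH^1(F,N)\to\HH^1(F,G)$ is surjective reduces the problem to $N$, an extension of the finite group $W=N/T$ by $T$. By the twisting exact sequence, each fiber of $\HH^1(F,N)\to\HH^1(F,W)$ is a quotient of $\HH^1(F,{}_cT)$ for a twisted torus ${}_cT$. Since $\HH^1(F,W)$ is finite by the finite case, it remains to show that $\HH^1(F,T)$ is finite for every $F$-torus $T$.

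\emph{Tori.} Let $L/F$ be a finite Galois splitting field of $T$. By Hilbert 90, $\HH^1(L,T)=1$, so inflation–restriction gives $\HH^1(F,T)=H^1(\mathrm{Gal}(L/F),T(L))$, a group killed by $m=[L:F]$. To get finiteness I would embed $T$ into a quasi-split torus via the $m$-power map. Using the exact sequence $1\to T[m]\to T\xrightarrow{m}T\to 1$, every class of $\HH^1(F,T)$, being $m$-torsion, comes from $\HH^1(F,T[m])$. Now $T[m]$ is a finite group, so $\HH^1(F,T[m])$ is finite by the finite case.

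The delicate points I expect to need care are the surjectivity $\HH^1(F,N)\to\HH^1(F,G)$ and the bookkeeping with twisted forms in the dévissage. For both I would cite Serre, \emph{Galois Cohomology}, III.4, rather than reprove them.
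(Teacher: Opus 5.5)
Your proof is correct and is essentially the standard dévissage from \cite{Serre1979}: finite coefficients via boundedness, reduction to the reductive quotient and then to the normalizer of a maximal torus, and tori via the $m$-torsion sequence. That is exactly what the paper relies on here, since it states this Fact by citation without giving a proof. Your converse implications, via injectivity of inflation and via $\HH^1_{\alg}(F,S_n)$, are also fine.

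One cosmetic point: the phrase about embedding $T$ into a quasi-split torus via the $m$-power map plays no role in your argument. The surjection $\HH^1_{\alg}(F,T[m])\to\HH^1_{\alg}(F,T)$ coming from $1\to T(\bar F)[m]\to T(\bar F)\xrightarrow{m}T(\bar F)\to 1$ already finishes the torus case. This works in characteristic $p$ as well, provided $T[m]$ is read as the finite Galois module $T(\bar F)[m]$.
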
 

In the final two sections of the paper we give versions of these statements in the setting of differential fields of characteristic $0$ and in the setting of  totally transcendental first order theories. To state our results we first review the conventions from \cite{meretzky2026galoistheoryautomorphismgroups} for the Picard-Vessiot theory and say a little bit about differential Galois cohomology.

Let $K$ be an ordinary differential field of characteristic $0$ with constant field $C_K$. In any differential field extension $L$ of $K$, the solutions to a homogeneous linear differential equation $\mathcal{L}(y) = y^{(n)} + a_{n-1}y^{(n)} + ... +a_0y = 0$ with $a_i \in K$, form a $C_L$-vector space of dimension $n$ at most in $L$. A fundamental system of solutions to an equation $\mathcal{L}(y) = 0$ is a set of $n$-independent solutions over the constant field of any differential field containing them. A Picard-Vessiot extension of finite type is an extension $L/K$ generated by a fundamental system of solutions to a homogeneous linear equation over $K$ such that $C_L=C_K$.  A Picard-Vessiot extension is a possibly infinite union (really, compositum in the differential closure $K^{\diff}$) of Picard-Vessiot extensions of finite type.

The constrained differential Galois cohomology was introduced in the differential setting by Kolchin and Kovacic after a period of subsequent refinements \cite{kolchin1985differential} and is equivalent to the definable Galois cohomology specialized to the theory of differentially closed fields of characteristic $0$, $\DCF_0$. This cohomology theory measures isomorphism classes of differential algebraic torsors for differential algebraic groups (equivalently definable groups in $DCF_0$) as exposited in \cite{kolchin1985differential}. We review definable Galois cohomology below in Section \ref{sec: Gal coho}.

Indeed versions of the basic relationships between Galois extensions and torsors from the algebraic setting extend to the differential setting. This takes the form of the well known ``torsor theorem" from Picard-Vessiot theory \cite{vdPS2003}. Namely, for a homogeneous linear differential equation $\mathcal{L}(y) = 0$ over a differential field $K$ the set of (realizations of a type of) fundamental systems of solutions in $K^{\diff}$ over $K(C_{K^{\diff}})$ form a right torsor for the Galois group attached to the extension (generated by any realization). This theorem and it's converse, namely that any Picard-Vessiot extension can be generated by a suitable equation on a certain torsor, were generalized to the totally transcendental setting in \cite{OmarDavid2026}. We quote the result as Theorem \ref{thm: cohomological fact} at the end of Section \ref{sec: def Gal th}. 

The analogue of the algebraic triviality result, Fact \ref{fact: alg coh triv} above, in the differential setting is from \cite{PILLAY2017809}. This theorem was recently generalized to the setting of difference Picard-Vessiot theory in \cite{Bachmayr_Wibmer_2026} and earlier partially to the multiple derivation setting in \cite{Chatzidakis2017GeneralizedPE}. A version of the same result also appears in \cite{Minchenko_2019}:

\begin{theorem}\label{thm: diff triviality}\cite{PILLAY2017809}
    Let $K$ be a differential field of characteristic $0$. Then $K$ is algebraically closed and closed under Picard-Vessiot extensions if and only if for any linear differential algebraic group $G$ over $K$, 
    $\HH_{\delta}^{1}(K,G)$ is trivial.
\end{theorem}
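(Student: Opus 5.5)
I will prove the two directions separately, working in a saturated model $\U \models \DCF_0$ containing $K$ and using the description of $\HH^1_\delta(K,G)$ as the set of $K$-isomorphism classes of $K$-definable principal homogeneous spaces (torsors) for $G$. Equivalently, it is the set of differential cocycles $\operatorname{Aut}_\delta(K^{\diff}/K)\to G(K^{\diff})$ modulo cohomology. Recall that such a cocycle is continuous and of the form $\sigma\mapsto a^{-1}\sigma(a)$ for some $a\in G(K^{\diff})$. In both directions the key point is that a torsor is trivial if and only if it has a $K$-point.

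\emph{Easy direction ($\Leftarrow$).} Assume $\HH^1_\delta(K,G)$ is trivial for every linear differential algebraic group $G$ over $K$. I would apply this to well-chosen $G$.
\begin{itemize}
\item \emph{Algebraic closedness.} For a finite Galois extension $E/K$ of degree $n$, take $G=\operatorname{Aut}$ of a finite set, i.e.\ the finite group $S_n$ viewed as a constant linear group. Equivalently, take the torsor given by the set of $n$-tuples of ordered roots of the minimal polynomial of a primitive element. This torsor is $K$-definable. If it is trivial, it has a $K$-point, so all roots lie in $K$ and $E=K$. Derivations extend uniquely to algebraic extensions, so this works in the differential setting.
\item \emph{Closure under Picard-Vessiot extensions.} Given $\mathcal L(y)=0$ over $K$ of order $n$, consider the set $X$ of fundamental systems (Wronskian matrices) in $\U$. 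This is a $K$-definable right torsor for $GL_n(C_\U)$, which is a linear differential algebraic group over $K$. Triviality of $\HH^1_\delta(K,GL_n(C))$ gives a $K$-point of $X$. Hence $K$ already contains a fundamental system, and so every Picard-Vessiot extension of $K$ of finite type is trivial.
\end{itemize}

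\emph{Hard direction ($\Rightarrow$).} Suppose $K$ is algebraically closed and PV-closed, and let $X$ be a $K$-definable torsor for a linear differential algebraic group $G\le GL_n(\U)$. I would argue in three steps.
\begin{enumerate}
\item First observe that $C_K$ is algebraically closed, since $K$ is. Also, $K$ is PV-closed, so every linear equation over $K$ has a full set of solutions in $K$. This is the linear analogue of being "constrainedly closed" for linear data.
\item Next, reduce to the case $G\le GL_n$ Zariski-dense in a linear algebraic group $H$. Then $X$ embeds in an $H$-torsor $Y$ for the algebraic group. By Fact \ref{fact: alg colim} (Steinberg/algebraic triviality over the algebraically closed $K$), $Y$ has a $K$-point. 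Translating by it, $X$ becomes a coset-like subset $g\cdot G'$, and we reduce to showing that the induced class lies in the kernel of $\HH^1_\delta(K,G)\to\HH^1_\delta(K,H)$. That kernel is described by $K$-points of the homogeneous space $H/G$, namely $(H/G)(K)$ modulo $H(K)$.
\item Finally, use Cassidy's classification of Zariski-dense differential subgroups of simple groups (constant forms $H(C)$ up to conjugacy), combined with induction on dimension via a composition series of $G$ (solvable/unipotent pieces: vector groups of solutions of linear equations, whose $\HH^1$ vanish by PV-closure). With these, show that any $K$-definable point of $H/G$ is $H(K)$-conjugate to the base point.
\end{enumerate}
For $G=H(C)$, the quotient $H/H(C)$ is identified via the logarithmic derivative $h\mapsto h'h^{-1}$ with the Lie algebra. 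Solving $h'h^{-1}=A$ for $A\in\mathfrak h(K)$ is a linear differential equation whose fundamental solution matrix generates a PV extension. By PV-closure, a solution with entries in $K$ exists, and it lies in $H$ by the Galois correspondence (Kolchin's theorem that the fundamental matrix can be chosen in $H$ when $A\in\mathfrak h$ over algebraically closed constants).

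The main obstacle is step 3 for general $G$: handling arbitrary linear differential algebraic groups rather than constant groups. I would try to use dévissage through the exact sequences of pointed sets for normal subgroups, together with twisting. The twisting step should ensure that twisted forms of the groups remain linear and defined over $K$, so the induction hypothesis applies to each fiber.
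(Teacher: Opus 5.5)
\textbf{Gap in the hard direction.} The paper gives no proof of this theorem; it imports it from \cite{PILLAY2017809}. Measured on its own terms, your ($\Rightarrow$) argument only carries out two cases: algebraic $G$, and $G=H(C)$ with $H$ defined over $C_K$. The passage to an arbitrary linear differential algebraic group is the actual content of the cited result, and you leave it as ``I would try to use d\'evissage''. Three concrete things are missing.
\begin{enumerate}
\item Cassidy's theorem is a statement over $\U$. It gives $G=gH_0(C)g^{-1}$ with $g\in H(\U)$. Your logarithmic-derivative computation, however, needs $G$ to be $K$-definably isomorphic to $H_1(C)$ for some $H_1$ defined over $C_K$. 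Moving the conjugator into $H(K)$ is itself a torsor problem, for the normalizer of $H_0(C)$, and it must be reduced to cases already handled.
\item You need an explicit normal series of $K$-definable subgroups whose successive quotients are again \emph{linear} and fall into treated cases. ``Vector groups of solutions of linear equations'' does not cover subgroups of tori, non-commutative unipotent pieces, or Zariski closures that are semisimple but not simple. Moreover, linearity of quotients is not automatic. A workable choice is $N=G\cap U$, where $U$ ranges over $K$-defined normal algebraic subgroups of the Zariski closure $H$ (unipotent radical, radical, simple factors). Then $G/N$ embeds in the linear algebraic group $H/U$.
\item In step 2, the pushed-out torsor $Y=X\times^G H$ is only a $\DCF_0$-definable torsor for the algebraic group $H$. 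So ``algebraic triviality over $K=K^{\alg}$'' does not apply to it as stated, and Steinberg is beside the point. You need Kolchin's comparison $\HH^1_\delta(K,H)=\HH^1_{\alg}(K,H)$ for algebraic $H$.
\end{enumerate}

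\textbf{What is sound.} The ($\Leftarrow$) direction is the standard argument. The $S_n$-torsor of orderings of the roots of an irreducible polynomial, and the $GL_n(C)$-torsor of fundamental matrices, are the right test objects. Since $\dcl(K)=K$, triviality of the class means a $K$-point. Your worry about twisting is unnecessary. Suppose $\HH^1_\delta(K,N)$ and $\HH^1_\delta(K,G/N)$ are trivial, and $X$ is a $K$-definable $G$-torsor. Then $X/N$ has a $K$-point, its fiber is a $K$-definable $N$-torsor, and that torsor has a $K$-point, which is a $K$-point of $X$. So only the fiber over the base point ever matters. Finally, there is a slip in your preliminaries. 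A cocycle $\sigma\mapsto a^{-1}\sigma(a)$ with $a\in G(K^{\diff})$ is a coboundary, so this cannot be the general form. The general definable cocycle has this shape with $a$ a point of the torsor $X(K^{\diff})$, where $a^{-1}\sigma(a)$ denotes the unique $c$ with $a\cdot c=\sigma(a)$.
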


Our main contribution to this line of work in the differential field setting are differential versions of the algebraic colimit formula of Fact \ref{fact: alg colim}. A version of this fact for constrained cohomology appears in \cite{kolchin1985differential} but we find it unsatisfactory because the colimit is taken essentially over the family of all finitely differentially generated extensions of $K$ in $K^{\diff}$. 

Before stating the analogue of Fact \ref{fact: alg colim}, we point out two well known differences between the algebraic and differential algebraic settings. Firstly, the union of all Picard-Vessiot extensions $(K)^{PV} = K^{PV_{1}}$ of $K$ in $K^{\diff}$ is not the differential closure, $K^{\diff}$, and indeed is not even PV-closed. One must iterate this operation to obtain a sequence of PV-extensions $$K \subseteq K^{PV} = K^{PV_1} \subseteq (K^{PV_1})^{PV} = K^{PV_2} \subseteq ... $$ whose union $K^{PV_\infty}$ is closed under PV extensions and is most of the time still not equal to $K^{\diff}$. The theorem above, Theorem \ref{thm: diff triviality}, can then be reformulated as in \cite{AnandDavidComm} saying that any torsor for a linear differential algebraic group over $K$ has a $K^{PV_{\infty}}$-point:

\begin{theorem}\label{thm: diff triviality in PV}\cite{AnandDavidComm}
    Let $K$ be a differential field of characteristic $0$. Then for any linear differential algebraic group $G$ over $K$, 
    $\HH_{\delta}^{1}(K^{PV_\infty}/K,G(K^{PV_\infty})) \cong \HH_{\delta}^{1}(K,G)$.
\end{theorem}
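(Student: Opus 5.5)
The idea is to deduce this from Theorem \ref{thm: diff triviality}, applied to the differential field $L := K^{PV_\infty}$, combined with the standard inflation–restriction behaviour of $\HH^1_\delta$. I read $\HH_{\delta}^{1}(K^{PV_\infty}/K,G(K^{PV_\infty}))$ as the subset of $\HH^1_\delta(K,G)$ consisting of classes of differential algebraic $G$-torsors over $K$ that have a $K^{PV_\infty}$-point. Under this reading the natural map to $\HH^1_\delta(K,G)$ is injective, so the statement amounts to: \emph{every} $G$-torsor $X$ over $K$ has a point in $K^{PV_\infty}$.

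\textbf{Step 1: $L$ satisfies the hypothesis of Theorem \ref{thm: diff triviality}.} I would show that $L$ is algebraically closed and closed under Picard-Vessiot extensions.
\begin{itemize}
\item \emph{Algebraic closure.} A finite algebraic extension of a differential field is Picard-Vessiot: an algebraic element $a$ satisfies a linear differential equation over the base, because its derivatives all lie in the finite-dimensional $K$-span of powers of $a$. Hence $K^{PV_1}$ already contains the algebraic closure of $K$ in $K^{\diff}$. The same applies at each stage, so $L$ is relatively algebraically closed in $K^{\diff}$, which is algebraically closed.
\item \emph{PV-closure.} Take a linear equation over $L$. Its finitely many coefficients lie in some $K^{PV_n}$. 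In $K^{\diff}$ a fundamental system of solutions with no new constants exists, because the constants of $K^{\diff}$ equal $C_K^{\alg}$, which lies in $K^{PV_1}$. That fundamental system generates a PV extension of $K^{PV_n}$, so it lies in $K^{PV_{n+1}}\subseteq L$.
\end{itemize}

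\textbf{Step 2: apply the triviality theorem.} Given a linear differential algebraic group $G$ over $K$ and a torsor $X$ over $K$, base change to $L$ gives a linear differential algebraic group $G_L$ and a torsor $X_L$. By Theorem \ref{thm: diff triviality} applied over $L$, the set $\HH^1_\delta(L,G)$ is trivial, so $X_L\cong G_L$ over $L$. Such an isomorphism is a definable bijection over $L$. The image of the identity under the inverse isomorphism is then a point of $X$ defined over $L$, i.e. a point in $\dcl(L)\cap K^{\diff}$. Since $L$ is a differential field, this is exactly a point in $X(L)$.

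\textbf{Step 3: conclude.} Every class in $\HH^1_\delta(K,G)$ is represented by a torsor with an $L$-point. Hence the inclusion $\HH_{\delta}^{1}(L/K,G(L))\to \HH^1_\delta(K,G)$ is surjective, and it is injective by definition.

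\textbf{Expected obstacle.} The subtle point is the constants issue in Step 1. The argument uses that $K^{PV_1}$ contains the algebraic closure of $C_K$, and that PV extensions taken inside $K^{\diff}$ exist over bases whose constant field is algebraically closed. If $C_K$ is not algebraically closed, PV extensions of $K$ in the strict sense ($C_L=C_K$) may not exist. In that case I would need to check that the paper's convention, the compositum in $K^{\diff}$, allows the algebraic constants to enter through finite algebraic PV extensions. After that first stage the constant field is algebraically closed, and the usual existence result applies at every later stage.
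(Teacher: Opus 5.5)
Your argument is correct and is essentially the paper's proof. The paper gives it as Corollary~\ref{cor: pv closed}, under that section's standing assumption $C_K=(C_K)^{\alg}$, which sidesteps your constants caveat: it applies the short exact sequence of Theorem~\ref{thm: SES} to $K\subseteq K^{PV_\infty}\subseteq K^{\diff}$ and kills the third term with Theorem~\ref{thm: pv closed} over $K^{PV_\infty}$, which is exactly your injectivity-plus-surjectivity argument in torsor language. Your Step~1 makes explicit the algebraic closedness of $K^{PV_\infty}$ that the paper leaves implicit, though a finite algebraic extension need not itself be Picard--Vessiot (it need not be normal); what you actually use, and what holds when $C_K$ is algebraically closed, is that an algebraic element lies in the Picard--Vessiot extension of the linear equation it satisfies.
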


A second difference between the two settings is that the differential closure of a differential field may properly embed into itself over $K$. We let $\mcl(K)$ denote the intersection of all self embeddings breaking from notation in \cite{meretzky2026galoistheoryautomorphismgroups}. We then have a sequence of differential extensions of $K$ all of which generalize, in different senses, the algebraic closure of a field:

$$K \subseteq K^{PV_1}\subseteq K^{PV_2} \subseteq ...\subseteq  K^{PV_\infty} \subseteq \mcl(K) \subseteq K^{\diff}$$

Our first (and essentially tautological) observation is that:

\begin{proposition}
    Let $G$ be a linear differential algebraic group defined over $K$ with $C_K = (C_K)^{\alg}$. Then $$\varinjlim H^1_{\defin}(L/K,G(L)) \cong H^1_{\defin}(K^{PV_1}/K,G(K^{PV_1}))$$ where the limit is again taken over the family of Picard-Vessiot extensions of finite type. 
\end{proposition}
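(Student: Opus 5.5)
Since $\HH^1_{\defin}(L/K,G(L))$ is by definition the set of classes in $\HH^1_{\defin}(K,G)$ of torsors having an $L$-point, the plan is to realize both sides as subsets of the single pointed set $\HH^1_{\defin}(K,G)$. There each transition map of the system is just an inclusion.

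First I would set up the directed system. The Picard-Vessiot extensions of finite type of $K$ inside $K^{\diff}$ are directed under inclusion. Given $L_1, L_2$ generated by fundamental systems of $\mathcal{L}_1(y)=0$ and $\mathcal{L}_2(y)=0$, the compositum $L_1L_2$ is generated by a fundamental system of an equation whose solution space contains both, for instance the least common left multiple of $\mathcal{L}_1$ and $\mathcal{L}_2$. The hypothesis $C_K=(C_K)^{\alg}$ is used exactly here. It guarantees that a Picard-Vessiot extension inside $K^{\diff}$ exists for each equation and has no new constants, since $C_{K^{\diff}}=C_K^{\alg}=C_K$. Hence $L_1L_2$ is again Picard-Vessiot of finite type, and the family is a directed set. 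Moreover $K^{PV_1}$ is the directed union of this family.

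Next, for $L\subseteq L'$, inflation sends $\HH^1_{\defin}(L/K,G(L))$ into $\HH^1_{\defin}(L'/K,G(L'))$. Concretely, a torsor over $K$ with an $L$-point has an $L'$-point. Viewed inside $\HH^1_{\defin}(K,G)$, this map is the inclusion of subsets, and so it is injective. The colimit of a directed system of subsets of a fixed set under inclusions is their union. It therefore remains to show that
\[
\bigcup_L \HH^1_{\defin}(L/K,G(L)) = \HH^1_{\defin}(K^{PV_1}/K,G(K^{PV_1}))
\]
as subsets of $\HH^1_{\defin}(K,G)$.

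The inclusion $\subseteq$ is immediate. For $\supseteq$, let $X$ be a $K$-definable torsor with a point $a\in X(K^{PV_1})$. The point $a$ is a finite tuple, and $K^{PV_1}$ is the directed union of the $L$'s. Hence $a$ already lies in some single $L$, so the class of $X$ lies in $\HH^1_{\defin}(L/K,G(L))$.

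The only point needing care is whether the paper defines $\HH^1_{\defin}(L/K,G(L))$ via cocycles on $\aut(L/K)$ rather than via torsors with $L$-points. In that case I would invoke the standard identification, as in Pillay's definable cohomology, of definable cocycle classes with the subset of torsor classes that split over $L$. I would also check that this identification is compatible with inflation. With that identification in hand, the argument reduces to the compactness and finite-tuple observation above.
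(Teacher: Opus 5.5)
Your proposal is correct and is essentially the paper's argument: the paper notes that the Picard-Vessiot extensions of finite type form a directed system with union $K^{PV_1}$ (it identifies them, via its lemma, with the normal finitely differentially generated subfields of $K^{PV_\infty}$), and then applies its general colimit lemma, where injectivity of the transition maps comes from the short exact sequence (your inclusion of torsor classes inside $\HH^1_{\defin}(K,G)$) and surjectivity from the finite defining tuple of a cocycle lying in some member of the system (your point $a\in X(K^{PV_1})$ lying in some $L$). The differences are only in packaging: you get directedness directly from the least common left multiple rather than through normality, and you phrase the colimit step via torsors with $L$-points, which the paper recalls are in bijection with $\HH^1_{\defin}(L/K,G(L))$ for normal $L$ (each such $L$ is normal because $C_{K^{\diff}}=C_K$).
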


In \cite{magid2022completepicardvessiotclosure} and  \cite{meretzky2026galoistheoryautomorphismgroups} a Galois correspondence for $K^{PV_\infty}$ is introduced and refined. It relates normal closures of iterated Picard-Vessiot extensions of finite type to relatively type definable subgroups of $\aut(K^{PV_\infty}/K)$. In Section \ref{sec: colim diff}, using Theorem \ref{thm: diff triviality in PV} together with results of \cite{magid2022completepicardvessiotclosure} and  \cite{meretzky2026galoistheoryautomorphismgroups} we give the following colimit description of $H^1(K,G)$ for a linear differential algebraic group $G$ over $K$. Namely, we give a differential analogue of Fact \ref{fact: alg colim}:

\begin{theorem}
    Let $G$ be a linear differential algebraic group defined over a differential field $K$ with $C_K = (C_K)^{\alg}$. Then $$\varinjlim\HH^1_{\delta}(L/K,G(L)) \cong \HH_{\delta}^{1}(K^{PV_\infty}/K,G(K^{PV_\infty}))$$ where the limit is taken over the directed system of normal closures of iterated PV extensions of $K$ of finite type. Note $\HH_{\delta}^{1}(K^{PV_\infty}/K,G(K^{PV_\infty}))\cong \HH^1_{\delta}(K,G)$ by Theorem \ref{thm: diff triviality in PV}.
\end{theorem}

At the end of Section \ref{sec: colim diff} we then propose the following definition of differential boundedness (giving a definitional form of Fact \ref{fact: alg bdd}) adding to a list of possible such definitions from \cite{AnandDavidComm}:

\begin{definition}\label{def: diff bdd}
    A differential field $K$ with $C_K = (C_K)^{\alg}$ is said to be bounded with respect to a differential algebraic group $G$ defined over $K$ if there is an iterated PV extension of finite type, whose normal closure $L$, satisfies $$\HH^1_{\defin}(L/K,G(L)) \cong \HH^1_{\defin}(K,G).$$
\end{definition}

We now discuss generalizations of our colimit and boundedness results in the totally transcendental setting which we present in Section \ref{sec: tt setting}.

In \cite{poizatimaginare}, Poizat gave the first model theoretic treatment of the Picard-Vessiot theory using the machinery of the binding group associated to internality data. This came after an investigation of the structure and minimality properties of prime models of totally transcendental first order theories \cite{PoizatPrime}.  The differential closure $K^{\diff}$ of a differential field being an example of the prime model of a totally transcendental theory, in this case $DCF_0$. In \cite{PoizatPrime}, the minimal closure of a set of parameters $A$ inside a copy of the prime model $M$ is defined as the intersection of all $A$-elementary embeddings $M$ into itself. We denote this $\mcl(A)$. The key property for Galois theory which $\mcl(A)$ enjoys is that any subset $B$ such that $A \subseteq B \subseteq \mcl(A) \subseteq M$, $\dcl(B)$ is exactly the fixed points of the action of $\aut(M/B)$ on $M$. Equivalently, $M$ is strongly homogeneous over any such $B$. In \cite{meretzky2026galoistheoryautomorphismgroups}, a Galois correspondence between $\dcl$-closed subsets of $\mcl(A)$ and relatively type definable subgroups of $\aut(M/A)$ is shown which in the case of differential fields gives the correspondence of \cite{magid2022completepicardvessiotclosure}.  In Section \ref{sec: min} we review the the minimal closure $\mcl(A)$ inside of  the prime model $M$ over a set of parameters $A$ in a totally transcendental theory. 

The first result of Section \ref{sec: tt setting} is the following lemma which appears as Lemma \ref{lem: mcl points}:

\begin{lemma}\label{lem: mcl points}
    Let $G$ be an $A$-definable group. Let $P$ be an $A$-definable PHS for $G$. If $G(M) = G(\mcl(A))$ then $P(M) = P(\mcl(A))$. 
\end{lemma}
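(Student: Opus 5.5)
Our plan is to combine the fact that the torsor is nonempty in $M$ with the invariance of $\mcl(A)$ under $A$-embeddings. Since $M$ is the prime model over $A$ and $P$ is a nonempty $A$-definable set (a PHS is by convention nonempty in the monster model), $P(M) \neq \emptyset$ because $M \preceq$ the monster model. The goal is to show that an arbitrary $p \in P(M)$ lies in $\mcl(A)$.

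The first step is to show $P(\mcl(A)) \neq \emptyset$. We pick any $p_0 \in P(M)$. The subtlety is that $p_0$ need not lie in $\mcl(A)$. To get around this, we would use that $\mcl(A)$ is itself the underlying set of an elementary substructure. In the totally transcendental setting, Poizat's minimal closure is the prime model's ``minimal'' part: for prime $M$ over $A$ there is an elementary submodel $M_0 \preceq M$ containing $A$ that is minimal over $A$, and $\mcl(A)$ is contained in every $A$-embedded copy of $M$. I would try the cleaner route of showing $P(M)\cap \mcl(A) \neq \emptyset$ directly.

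\begin{enumerate}
\item Fix $p_0 \in P(M)$ and an arbitrary $A$-elementary embedding $\sigma\colon M \to M$.
\item $\sigma(p_0) \in P(M)$, so $\sigma(p_0) = p_0 \cdot g_\sigma$ for a unique $g_\sigma \in G(M) = G(\mcl(A))$.
\item Define $g_\sigma$ via $\sigma$ and consider $\sigma$ applied to $\mcl(A)$: since $\mcl(A)$ is the intersection of images of all such embeddings, and embeddings map $\mcl(A)$ into itself (indeed fix it setwise), $G(\mcl(A))$ is $\sigma$-stable.
\end{enumerate}

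This alone does not place $p_0$ in $\mcl(A)$. So instead we argue: every element $p\in P(M)$ equals $p_0 g$ with $g\in G(\mcl(A))$, hence $P(M) = p_0\cdot G(\mcl(A))$. Now $\sigma(M)\preceq M$ and $\sigma(M)$ contains a point of $P$ (elementarity), say $\sigma(p_0)$, so $P(M)=\sigma(p_0)\cdot G(\mcl(A))$. Since $G(\mcl(A))\subseteq \sigma(M)$ (as $\mcl(A)\subseteq\sigma(M)$) and $\sigma(M)$ is closed under the definable action, $P(M)\subseteq \sigma(M)$. As $\sigma$ was arbitrary, $P(M)\subseteq \bigcap_\sigma \sigma(M) = \mcl(A)$, giving $P(M)=P(\mcl(A))$.

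The main point to verify carefully is that $\mcl(A) \subseteq \sigma(M)$ for every $A$-elementary self-embedding $\sigma$, which is just the definition of $\mcl(A)$ as that intersection, and that $\sigma(M)$ is closed under the $A$-definable action map (immediate since $\sigma(M)\preceq M$ is a model containing $A$). With these, the argument above is essentially complete; I expect the only obstacle to be the bookkeeping of nonemptiness of $P(M)$, handled by $M$ being a model over $A$.
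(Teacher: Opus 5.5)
Your final argument is correct and is essentially the paper's proof. The paper takes an arbitrary point $p_1\in P(\gamma(M))$ (you take $\sigma(p_0)$), writes $p=p_1g$ with $g\in G(M)=G(\mcl(A))\subseteq\gamma(M)$, and concludes $p\in\gamma(M)$ for every $\gamma$. You should delete the exploratory remarks: the claims that $\mcl(A)$ is an elementary substructure, and that $M$ has an elementary submodel minimal over $A$, are false in general (both fail whenever $M$ is not minimal over $A$), and, like steps 1--3, they play no role in the argument.
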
 

For our notion of definable Galois extension we follow \cite{OmarAnand}. We review this in Section \ref{sec: def Gal th}. Namely over a set of parameters $A$, with respect to a definable set $X$, a definable Galois extension $B\ = dcl(A,b)$ for $b$ a realization of a type strongly internal and weakly orthogonal to $X$. 

In Section \ref{sec: tt setting}, we define, following \cite{meretzky2026galoistheoryautomorphismgroups}, an iterated definable Galois extension $A \subseteq B \subseteq M$ relative to $X$ is a $\dcl$-closed subset such that either there exists an infinite sequence of $\dcl$-closed subsets $A=B_0 \subseteq B_1 \subseteq \cdots $ with each $B_i$ a proper definable Galois extension of $B_{i-1}$ such that $B= \cup_iB_i$, or there exists a finite such sequence $A=B_0 \subseteq B_1 \subseteq \cdots B_n=B$. If $B$ is the union of a finite such chain where each $B_i/B_{i-1}$ is a definable Galois extension of finite type, we call $B$ an iterated definable Galois extension of finite type of $A$ relative to $X$. Similarly we define the sequence of definable Galois closures $A \subseteq A^{XDG_1} \subseteq A^{XDG_2} \subseteq ... \subseteq A^{XDG_\infty}$ to be the sequence of definable Galois closures with reference to some definable set $X$ over $A$ in analogy with the sequence of Picard-Vessiot closures above. 

Using the Lemma \ref{lem: mcl points} we prove the following: 

\begin{theorem}\label{thm: mcl points}
    Let $G$ be an $A$-definable group with $G(M) = G(\mcl(A))$. Then
    \begin{enumerate}
        \item The definable Galois cohomology of $G$ over $\mcl(A)$ is trivial: $$\HH^1_{\defin}(M/\mcl(A),G(M)) = 1.$$
        \item Using the short exact sequence $$\HH^1_{\defin}(\mcl(A)/A,G(\mcl(A))) \cong \HH^1_{\defin}(A,G)$$ Namely, all of the definable Galois cohomological information for $G$ in $M$ is contained in $\mcl(A)$.
        \item Any definable Galois extension relative to $G$ is contained in $\mcl(A)$.
    \end{enumerate}
\end{theorem}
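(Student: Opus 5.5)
The plan is to derive all three items from Lemma \ref{lem: mcl points}, together with the standard description of definable Galois cohomology from \cite{pillay1997}. In a totally transcendental theory over a prime model $M$ over $A$, $\HH^1_{\defin}(A,G)$ is the set of $A$-definable isomorphism classes of $A$-definable PHSs for $G$. Every such PHS has a point in $M$, since $M$ is atomic over $A$ and realizes isolated types, and a consistent definable set over $A$ contains an isolated type in a t.t. theory. For an intermediate $\dcl$-closed set $B$, $\HH^1_{\defin}(B/A,G(B))$ denotes the classes of PHSs having a $B$-point. Throughout, $\mcl(A)$ is $\dcl$-closed and $M$ is strongly homogeneous over every $B$ with $A\subseteq B\subseteq\mcl(A)$.

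For (2), take an $A$-definable PHS $P$ for $G$. As noted, $P(M)\neq\emptyset$, and Lemma \ref{lem: mcl points} gives $P(M)=P(\mcl(A))$, so $P$ has an $\mcl(A)$-point. Hence the natural inclusion $\HH^1_{\defin}(\mcl(A)/A,G(\mcl(A)))\to \HH^1_{\defin}(A,G)$ is surjective. Injectivity is automatic, since both sets are quotients by the same relation of $A$-definable isomorphism. I will also phrase this through the inflation–restriction exact sequence
\[
1\to \HH^1_{\defin}(\mcl(A)/A,G(\mcl(A)))\to \HH^1_{\defin}(A,G)\to \HH^1_{\defin}(\mcl(A),G),
\]
and show that the last map is trivial, which is exactly item (1).

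For (1), take a $\mcl(A)$-definable PHS $Q$ for $G$, defined over some finite tuple $c\in\mcl(A)$. I want a point of $Q$ in $\mcl(A)$ (or in $\dcl(A,c)$), not merely in $M$. The idea is to consider a $\mcl(A)$-definable family of PHSs whose fibre over $c$ is $Q$. Write $Q=Q_c$ with $Q_y$ defined by a formula over $A$, and restrict $y$ to the $A$-definable set of parameters for which $Q_y$ is a PHS for $G$. The key step is to reduce to an $A$-definable PHS: replace $Q$ by the $A$-definable set $P=\{(y,x): y\models \tp(c/A)\text{-isolating formula},\ x\in Q_y\}$, using that $\tp(c/A)$ is isolated because $c\in M$. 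Then $P$ is not a $G$-PHS, but it is a PHS for $G$ relative to the fibration over the finite orbit-set, and a points argument parallel to Lemma \ref{lem: mcl points} should still give $P(M)\subseteq\mcl(A)$.

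The main obstacle is exactly this relative version: an element of $M$ lies in $\mcl(A)$ iff it is fixed by every $A$-embedding, and I need $Q_c(M)$ to be preserved. The approach I will try first is the following. For an $A$-embedding $\sigma\colon M\to M$, $\sigma$ fixes $c$, so $\sigma$ maps $Q_c(M)$ into itself. For $q\in Q_c(M)$, every other point is $qg$ with $g\in G(M)=G(\mcl(A))$, and $\sigma(qg)=\sigma(q)g$. The image of $Q_c(M)$ under $\sigma$ is therefore a $G(M)$-stable subset, hence all of $Q_c(M)$. Since the group action is free, $\sigma$ acts on $Q_c(M)$ as right multiplication by some fixed $h_\sigma\in G(M)$. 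Showing $h_\sigma=1$ for all $\sigma$ is the crux. If this is how Lemma \ref{lem: mcl points} was proved over $A$, the same proof goes through verbatim with $A$ replaced by $\mcl(A)$; note that $\mcl(\mcl(A))=\mcl(A)$ because the prime model over $A$ is prime over $\mcl(A)$.

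For (3), a definable Galois extension relative to $G$ is $\dcl(A,b)$ with $\tp(b/A)$ isolated, strongly internal and weakly orthogonal to $G$. By internality, $b\in\dcl(A,\bar g)$ for a tuple $\bar g$ from $G(M)$, and the type can be realized in $M$. Since $\bar g\in G(\mcl(A))$ and $\mcl(A)$ is $\dcl$-closed, $b\in\mcl(A)$. Alternatively, one can use the torsor theorem (Theorem \ref{thm: cohomological fact}): realizations of $\tp(b/A)$ form a PHS for the binding group, which is $A$-definable in $G^{\eq}$. Lemma \ref{lem: mcl points} applied to this PHS then places $b$ in $\mcl(A)$.
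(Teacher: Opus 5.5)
\textbf{Verdict: the proposal has a genuine gap in (1).} Item (2) is fine, and (3) is rescued only by your alternative argument.

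\textbf{Item (2).} Every $A$-definable PHS has an $M$-point, hence by Lemma \ref{lem: mcl points} an $\mcl(A)$-point. So inflation is onto, and injectivity is formal. This is exactly what the paper's written proof of (1) establishes (it only treats $A$-definable PHSs) before it invokes the short exact sequence.

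\textbf{Item (1).} You are right that (1) requires handling $\mcl(A)$-definable PHSs $Q_c$, but your argument does not do it.
\begin{itemize}
\item Your plan rests on the claim that an element of $M$ lies in $\mcl(A)$ iff it is fixed by every $A$-embedding. That is false. The common fixed points of $\aut(M/A)$ form $\dcl(A)$, whereas $\mcl(A)=\bigcap_\gamma\gamma(M)$ is an intersection of \emph{images}.
\item For an $A$-embedding $\sigma$, neither $\sigma(c)=c$ nor $\sigma(qg)=\sigma(q)g$ is justified. Pointwise fixing of $Q_c(M)$ also fails in general: take $Q=G$ with $G(M)\neq G(A)$.
\item If you meant $\mcl(A)$-embeddings, the setup is correct, since by Galois definability over $\mcl(A)$ (over which $M$ is prime) being fixed by all of them does characterize $\mcl(A)$. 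But then $h_\sigma=1$ is just a restatement of $Q_c(M)\subseteq\mcl(A)$, and you give no argument for it.
\item Your closing sentence is conditioned on the lemma being proved by a fixed-point mechanism, which it is not.
\end{itemize}
The missing idea is the image argument. Take any $A$-elementary $\gamma\colon M\to M$. Then $\gamma(M)\prec\bar{M}$ contains $\mcl(A)$, hence contains $c$ and all of $G(M)=G(\mcl(A))$. So $Q_c$ has a point $q_1\in\gamma(M)$. Every $q\in Q_c(M)$ equals $q_1g$ with $g\in G(M)\subseteq\gamma(M)$, so $q\in\gamma(M)$. Hence $Q_c(M)\subseteq\mcl(A)$ and the class of $Q_c$ is trivial. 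Equivalently, apply the lemma over the base $\mcl(A)$, using Fact \ref{fact: mcl}(2),(3). The detour through the family $P$ is unnecessary, and the realizations of $\tp(c/A)$ need not form a finite set.

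\textbf{Item (3).} Your first argument misreads internality. Strong internality says $b'\in\dcl(A,G,b)$ for realizations $b,b'$ of $p=\tp(b/A)$, so it needs a realization as a parameter. Suppose $b$ were in $\dcl(A,\bar{g})$ with $\bar{g}$ from $G$. Then the unique extension of $p$ to $\dcl(A,G)$ given by weak orthogonality would contain the formula $x=b$. So $p$ would have only one realization and $b\in\dcl(A)$; no proper definable Galois extension has such a generator.

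Your alternative via the torsor theorem is the paper's argument and works, provided you take the extrinsic group $H$, which lives in $\dcl(A,G)$, rather than the intrinsic binding group, which lives on pairs of realizations. Then $H(M)\subseteq\dcl(A,G(\mcl(A)))\subseteq\mcl(A)$. Lemma \ref{lem: mcl points} gives $Q(M)\subseteq\mcl(A)$, so the generator $b\in Q(M)$ lies in $\mcl(A)$.
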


By the last part we obtain:

\begin{proposition}
    Let $A$ be a $\dcl$-closed set of parameters. Let $X$ be an $A$-definable set such that $X(M) = X(\mcl(A))$. Then each $A^{X\text{DG}_n}$ is contained in $\mcl(A)$ and we have $$A \subseteq A^{X\text{DG}_1} \subseteq A^{X\text{DG}_2} \cdots \subseteq A^{X\text{DG}_\infty} \subseteq \mcl(A) \subseteq M.$$ Furthermore each $A^{X\text{DG}_n}$ is normal.
\end{proposition}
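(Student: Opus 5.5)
We argue by induction on $n$, using part (3) of Theorem \ref{thm: mcl points}; the key point is that the hypothesis $X(M)=X(\mcl(A))$ stays available as the base set grows. For $n=1$: $A^{X\text{DG}_1}$ is the $\dcl$-closure of the union of all definable Galois extensions $\dcl(A,b)$ of $A$ relative to $X$. Each such $b$ realizes a type strongly internal and weakly orthogonal to $X$. By the theory of \cite{OmarAnand} and Theorem \ref{thm: cohomological fact}, $\dcl(A,b)$ is a definable Galois extension relative to the binding group $G$, which is $A$-definable (or $A$-interpretable) inside $\dcl(A,X)$. Hence $G(M)\subseteq \dcl(A,X(M))=\dcl(A,X(\mcl(A)))\subseteq \mcl(A)$, since $\mcl(A)$ is $\dcl$-closed in $M$. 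Theorem \ref{thm: mcl points}(3) then gives $\dcl(A,b)\subseteq\mcl(A)$. Alternatively, strong internality gives $b\in\dcl(A,c,X)$ for some parameter $c$, and $b$ lies on a PHS for $G$, so Lemma \ref{lem: mcl points} applies directly. Since $\mcl(A)$ is $\dcl$-closed, $A^{X\text{DG}_1}\subseteq\mcl(A)$.

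For the inductive step, set $B=A^{X\text{DG}_n}\subseteq\mcl(A)$. I would use the fact from \cite{PoizatPrime} that for $A\subseteq B\subseteq \mcl(A)$, $M$ is still prime over $B$ and $\mcl(B)=\mcl(A)$. The inclusion $\mcl(B)\subseteq\mcl(A)$ needs care: every $B$-embedding $M\to M$ is an $A$-embedding, so $\mcl(A)\subseteq\mcl(B)$ holds trivially. For the reverse inclusion, use that any $A$-embedding fixes $\mcl(A)\supseteq B$ pointwise, so the $A$-embeddings and the $B$-embeddings coincide. Then $X(M)=X(\mcl(B))$, and the base case applied over $B$ gives $A^{X\text{DG}_{n+1}}=B^{X\text{DG}_1}\subseteq\mcl(B)=\mcl(A)$. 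Taking unions, $A^{X\text{DG}_\infty}\subseteq\mcl(A)$, which establishes the displayed chain.

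For normality, $\mcl(A)$ is fixed setwise, indeed pointwise, by every $A$-automorphism of $M$, so normality cannot simply be inherited from $\mcl(A)$. Instead I would argue directly. $A^{X\text{DG}_n}$ is defined by an $\aut(M/A)$-invariant recipe: it is the union of all definable Galois extensions of $A^{X\text{DG}_{n-1}}$ relative to $X$ inside $M$. Any $\sigma\in\aut(M/A)$ preserves $A^{X\text{DG}_{n-1}}$ setwise by induction, fixes $X$ setwise because $X$ is $A$-definable, and maps strongly internal, weakly orthogonal types to types of the same kind. So $\sigma$ permutes these extensions and $\sigma(A^{X\text{DG}_n})=A^{X\text{DG}_n}$. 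If normality in the paper means being $\aut(M/A)$-invariant, or equivalently corresponding to a normal subgroup under the Galois correspondence of \cite{meretzky2026galoistheoryautomorphismgroups}, this suffices.

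The main obstacle I expect is the base-change step: confirming that $X(M)=X(\mcl(A))$ passes to $\mcl(B)$, that is, that $\mcl(B)=\mcl(A)$ for $B\subseteq\mcl(A)$. A secondary obstacle is confirming that the binding group of each extension has all its $M$-points in $\mcl(A)$. If the groupoid argument fails there, I would fall back on the direct internality argument via $\dcl(A,X)$.
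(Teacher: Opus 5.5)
Your overall route is the paper's. Containment comes from Lemma \ref{lem: mcl points} / Theorem \ref{thm: mcl points}(3), applied to the binding group living in $\dcl(A,X)$. Normality comes from the fact that $\aut(M/A)$ permutes the $X$-definable Galois extensions of $A^{X\text{DG}_{n-1}}$; with strong $\omega$-homogeneity of $M$ over $A$, invariance gives normality in the paper's type-theoretic sense. The paper compresses the passage to $n\geq 2$ into ``follows from Theorem \ref{thm: mcl points}(3)'', and making the induction explicit is fine.

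\textbf{The gap is in your base-change step.} You justify $\mcl(B)\subseteq\mcl(A)$ by saying that every $A$-elementary embedding $M\to M$ fixes $\mcl(A)\supseteq B$ pointwise, so that $A$-embeddings and $B$-embeddings coincide. This is false. $\mcl(A)$ is contained in the image of every such embedding and is setwise invariant under $\aut(M/A)$, but it is not fixed pointwise. For example, an element of $\acl(A)\setminus\dcl(A)$ lies in $\mcl(A)$ and is moved by some $A$-automorphism of $M$. In fact, if $\aut(M/A)$ fixed $\mcl(A)$ pointwise, the Galois definability fact would give $\mcl(A)=\dcl(A)$. Then every $A^{X\text{DG}_n}$ would equal $A$, and the correspondence for $\aut(\mcl(A)/A)$ would be empty. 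You repeat the same claim (``indeed pointwise'') in your normality paragraph, though your normality argument does not actually use it.

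The conclusion $\mcl(B)=\mcl(A)$ for $A\subseteq B\subseteq\mcl(A)$ is still true, so the step can be repaired in either of two ways.
\begin{enumerate}
\item Use the monotonicity you already noted (more parameters give fewer embeddings, hence a larger intersection) to get $\mcl(B)\subseteq\mcl(\mcl(A))$. Then use $\mcl(\mcl(A))=\mcl(A)$ from Fact \ref{fact: mcl}(2).
\item Avoid base change and run Poizat's argument from the proof of Fact \ref{fact: def Galois corresp.}(3) directly. Take $B=A^{X\text{DG}_n}\subseteq\mcl(A)$, $b\in M$ with $\tp(b/B)$ strongly internal and weakly orthogonal to $X$, and an $A$-embedding $\gamma$. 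Since $B\subseteq\gamma(M)$, the isolated type $\tp(b/B)$ has a realization $b_0\in\gamma(M)$. By strong internality and elementarity, $b\in\dcl(B,X(M),b_0)$. Since $X(M)=X(\mcl(A))\subseteq\gamma(M)$, this gives $b\in\gamma(M)$. That argument only needs $X(M)\subseteq\gamma(M)$, not strong minimality.
\end{enumerate}
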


In Section \ref{sec: Gal coho} we recall the definition of definable Galois cohomology from \cite{pillay1997}. For an $A$-definable group $G$ in a totally transcendental theory eliminating imaginaries, the definable Galois cohomology classifies
$A$-definable right principal homogeneous spaces (PHSs) or torsors for $G$. We use the short exact sequence of \cite{MERETZKY_SES} to prove the following general lemma: 

\begin{lemma}
    Let $M$ be a copy of the prime model over $A$. Let $B$ be normal in $M$ over $A$. Let $\{B_i\}_{i \in I}$ be a family of normal subsets of $B$ which form a directed system under inclusion and such that $\cup_{i \in I}B_i = B$. Then $\varprojlim \aut(B_i/A) \cong \aut(B/A)$ and moreover if $G$ is an $A$-definable group then $$\varinjlim \HH^1_{\defin}(B_i/A,G(B_i)) \cong \HH^1_{\defin}(B/A,G(B)).$$
\end{lemma}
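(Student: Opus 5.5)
We prove the two assertions in order: first the profinite-style description of $\aut(B/A)$, then the cohomology colimit.

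\textbf{The limit of automorphism groups.} Since each $B_i$ is normal over $A$, every $\sigma \in \aut(B/A)$ restricts to an automorphism $\sigma|_{B_i} \in \aut(B_i/A)$. Normality of $B_i$ means $\aut(M/A)$ stabilises $B_i$ setwise, and $\aut(B/A)$ is the image of $\aut(M/A)$ by strong homogeneity of $M$ over $A$. For $B_i \subseteq B_j$ the restriction maps $\aut(B_j/A) \to \aut(B_i/A)$ form an inverse system, and restriction gives a homomorphism $\aut(B/A) \to \varprojlim \aut(B_i/A)$.

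This map is injective because $B = \bigcup_i B_i$. For surjectivity, take a compatible family $(\sigma_i)$. Directedness lets us glue it into a well-defined bijection $\sigma$ of $B$ fixing $A$. It is elementary, since any finite tuple from $B$ lies in some $B_i$, where $\sigma$ agrees with $\sigma_i$. Here we use that "$\aut(B_i/A)$" means the restrictions of automorphisms of $M$, equivalently the partial elementary maps of $B_i$ over $A$, by homogeneity of the prime model. Hence $\sigma$ is a partial elementary map on $B$, so it lies in $\aut(B/A)$.

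\textbf{The cohomology colimit.} We then identify $\HH^1_{\defin}(B_i/A,G(B_i))$ with the classes of $A$-definable PHSs for $G$ having a point in $B_i$. Equivalently, via the short exact sequence of \cite{MERETZKY_SES}, it is the kernel of $\HH^1_{\defin}(A,G) \to \HH^1_{\defin}(B_i,G)$, realised by definable cocycles $\aut(B_i/A) \to G(B_i)$ modulo coboundaries.

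The transition maps for $B_i \subseteq B_j$ are inflation: compose a cocycle with restriction $\aut(B_j/A)\to\aut(B_i/A)$ and include $G(B_i)\subseteq G(B_j)$. All these maps are compatible with the maps into $\HH^1_{\defin}(B/A,G(B))$, and all are injective, since they sit inside $\HH^1_{\defin}(A,G)$ as subsets. So the colimit is the union of the images, and we must show this union equals $\HH^1_{\defin}(B/A,G(B))$.

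The key step, and the main obstacle, is surjectivity. Take a class represented by an $A$-definable PHS $P$ with a point $p \in P(B)$. Then $p$ is a finite tuple, so $p \in B_i$ for some $i$ because the system is directed and covers $B$. Thus $P$ has a $B_i$-point and the class comes from $\HH^1_{\defin}(B_i/A,G(B_i))$.

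The delicate part is making sure the definable cocycle formalism agrees with the PHS description. Concretely, the cocycle $\sigma \mapsto p^{-1}\sigma(p)$ on $\aut(B/A)$ factors through restriction to $\aut(B_i/A)$ and takes values in $G(B_i)$. This holds because $p\in B_i$ and $B_i$ is normal, so $\sigma(p)\in B_i$ and $p^{-1}\sigma(p)\in \dcl(B_i)\cap G$. We may need $B_i$ to be $\dcl$-closed, or else replace it by $\dcl(B_i)$, which is still normal.

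Injectivity of the induced colimit map follows from injectivity into $\HH^1_{\defin}(A,G)$ given by the short exact sequence. Alternatively, a coboundary witness $g\in G(B)$ lies in some larger $B_j$, which kills the class already at stage $j$.
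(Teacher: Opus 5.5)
Your proposal is correct and follows essentially the same route as the paper. Both prove $\aut(B/A)\cong\varprojlim\aut(B_i/A)$ by restriction and gluing. Both then use Theorem \ref{thm: SES} for the transition maps and get surjectivity from the fact that the finitely many parameters involved (your PHS point $p$, the paper's definability data of the cocycle) already lie in some $B_i$. Your injectivity argument, via the common embedding of all $\HH^1_{\defin}(B_i/A,G(B_i))$ into $\HH^1_{\defin}(A,G)$, is only a tidier packaging of the paper's observation that classes with the same image already agree at some $B_k$.
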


We then obtain the following colimit formula in the totally transcendental setting, the condition $X(M)=X(A)$ analogous to the assumption that the constants be algebraically closed in the differential setting:

\begin{theorem}\label{thm: idg colim}
    Let $G$ be an $A$ definable group living on an $A$-definable set $X$ with $X(M)=X(A)$. Then $$\varinjlim \HH^{1}_{\defin}(B/A, G(B)) \cong \HH^{1}_{\defin}(A^{XDG_\infty}/A, G(A^{XDG_\infty}))$$ where the limit is taken over the family of normal closures of iterated definable Galois extensions of finite type $B$ of $A$ relative to $X$.
\end{theorem}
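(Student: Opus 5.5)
The plan is to deduce Theorem \ref{thm: idg colim} from the colimit lemma stated just above, applied with $B = A^{XDG_\infty}$ and with the family $\{B_i\}$ the normal closures of iterated definable Galois extensions of finite type of $A$ relative to $X$. That lemma asks for three things: $A^{XDG_\infty}$ is normal in $M$ over $A$; each $B_i$ is normal and contained in $A^{XDG_\infty}$; and the $B_i$ form a directed system under inclusion whose union is $A^{XDG_\infty}$. The proof consists of checking these.

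\emph{Normality of $A^{XDG_\infty}$.} The hypothesis $X(M)=X(A)$ gives in particular $X(M)=X(\mcl(A))$. The proposition following Theorem \ref{thm: mcl points} then says that each $A^{XDG_n}$ is normal and contained in $\mcl(A)$. I will argue that an increasing union of normal subsets is normal: any $A$-automorphism of $M$ maps each $A^{XDG_n}$ onto itself, hence maps the union onto itself. If normality is instead defined by a type-theoretic condition (every realization in $M$ of the type over $A$ of an element of $B$ lies in $B$), the same argument works, since each element lies in some $A^{XDG_n}$.

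\emph{The $B_i$ lie in $A^{XDG_\infty}$ and cover it.} Let $A=C_0\subseteq C_1\subseteq\cdots\subseteq C_n$ be an iterated definable Galois extension of finite type. By induction on $k$, $C_k\subseteq A^{XDG_k}$. The point is that $C_{k+1}=\dcl(C_k,c)$ with $\tp(c/C_k)$ internal to $X$ and weakly orthogonal to $X$. Since $C_k\subseteq A^{XDG_k}$ and $X(M)=X(A)$, the type of $c$ over $A^{XDG_k}$ should still be internal and weakly orthogonal to $X$; this uses that $X$ has no new points, so $X(M)\subseteq\dcl(A)$. Hence $c\in A^{XDG_{k+1}}$. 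The normal closure of $C_n$ is generated by the $\aut(M/A)$-conjugates of $C_n$, and these also lie in $A^{XDG_n}$ because that set is normal.

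For the reverse inclusion, I take any $a\in A^{XDG_\infty}$. It lies in some $A^{XDG_n}$, and by unwinding the definition of the closures, $a$ is obtained through a finite chain in which each step adjoins a single realization of an internal, weakly orthogonal type (a definable Galois extension of finite type). So $a$ lies in the normal closure of an iterated extension of finite type.

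\emph{Directedness.} Given two such normal closures $B_1$ and $B_2$, the plan is to concatenate their chains. Base change the chain for $B_2$ over $B_1$: each generator $c$ of $B_2$'s chain has a type over the growing set that is still $X$-internal, and weakly orthogonality is preserved because $X(M)=X(A)$. Taking the normal closure of the concatenated chain gives a member of the family containing both $B_1$ and $B_2$.

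\emph{Main obstacle.} Both the inclusion $C_k\subseteq A^{XDG_k}$ and the directedness rely on the same base-change claim: if $\tp(c/C)$ is internal and weakly orthogonal to $X$ and $C\subseteq D$ are $\dcl$-closed subsets of $M$, then $\tp(c/D)$ is again internal and weakly orthogonal to $X$, so that $\dcl(D,c)$ is a definable Galois extension of $D$. Internality transfers trivially. For weak orthogonality I intend to use that $X(M)=X(A)\subseteq\dcl(A)$: every tuple from $X$ is $A$-definable, so any type over $D$ in $X$ is realized by a tuple from $\dcl(A)$. I expect isolation of $\tp(c/C)$ in the prime model to give that $\tp(c/D)$ is determined by $\tp(c/C)$ together with formulas involving $X$-points, all of which are $A$-definable. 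I expect this step to be the delicate one.

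Once all three conditions are verified, the colimit lemma gives the stated isomorphism.
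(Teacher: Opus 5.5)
Your overall route matches the paper's proof. The paper invokes Lemma \ref{lem: IDG form directed system} for directedness, asserts that the union of the normal closures is $A^{XDG_\infty}$, and applies Lemma \ref{lem: colim}. Your checks of normality (via Proposition \ref{prop: struct of M}) and of $C_k\subseteq A^{XDG_k}$ are correct fillings-in of what the paper leaves implicit.

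The one place your plan goes wrong is the base-change step you single out as the main obstacle. Your intended justification does not hold. You say every tuple from $X$ is $A$-definable, so types over $D$ in $X$ are realized in $\dcl(A)$. But weak orthogonality quantifies over tuples from $X(\bar{M})$, and $X(M)=X(A)$ says nothing about those. Also, a nonalgebraic type over $D$ in $X$ is not realized in $\dcl(A)$ at all.

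No isolation argument is needed either. The step is immediate from Fact \ref{weak orth fact}, exactly as in the proof of Lemma \ref{lem: IDG form directed system}. For $c\in M$ and $A\subseteq D\subseteq M$ one has $X(D)\subseteq X(\dcl(D,c))\subseteq X(M)=X(A)\subseteq X(D)$, so $\tp(c/D)$ is weakly orthogonal to $X$. Conjugating over $D$ handles realizations outside $M$. Strong internality (which is what you need, not mere internality as you wrote) passes to $\tp(c/D)$ trivially, because realizations of $\tp(c/D)$ realize $\tp(c/C)$ and $\dcl(C,X,c)\subseteq\dcl(D,X,c)$.

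A smaller point on directedness: do not base-change the chain for $B_2$ over the normal closure $B_1$ itself. $B_1$ need not be of finite type, so the concatenated chain could leave the indexing family. Instead take iterated extensions of finite type $C, C'$ whose normal closures are $B_1, B_2$. Then $\dcl(C,C')$ is of finite type by Lemma \ref{lem: IDG form directed system}, and its normal closure contains both $B_1$ and $B_2$.
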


In contrast with the differential case we do not have a colimit formula for the full extension $M/A$. We leave to future work an investigation of the extensions $A^{XDG_\infty} \subseteq \mcl(A)$ for various sets $X$. Nevertheless we feel the results in this section, together with the correspondence of \cite{meretzky2026galoistheoryautomorphismgroups}, to be a meaningful response to the call in \cite{PoizatPrime} for an investigation of the automorphism group of $\mcl(A)$ over $A$. 

At the end of Section \ref{sec: tt setting}, we propose a definition of boundedness for a set of parameters:

\begin{definition}
    Let $X$ be an $A$-definable set satisfying $X(A)= X(M)$. Let $G$ be an $A$-definable group living on $X$.  The set of parameters $A$ is said to be bounded with respect to $G$ and $X$ if there exists an iterated definable Galois extension of finite type relative to $X$ whose normal closure $B$ satisfies $$\HH^{1}_{\defin}(B/A, G(B)) \cong \HH^{1}_{\defin}(A^{XDG_\infty}/A, G(A^{XDG_\infty})).$$
\end{definition}

In Section \ref{sec: mult} we give the other main result of the paper which is a generalization of the following result relating the multiplicity of Picard-Vessiot extensions to the algebraic Galois cohomology of the Galois group. There is no assumption here that the constant field be algebraically closed:

\begin{theorem}\label{fact: counting PV}\cite{KamenskyPillay}\cite{DeligneMilne2022} Let $\mathcal{L}(y)=0$ be an  homogeneous linear ordinary differential equation (LODE) over a differential field $K$ of characteristic $0$. Assume that there is at least one nontrivial Picard-Vessiot extension $L/K$ for $\mathcal{L}(y)=0$ in $K^{\diff}$, a differential closure of $K$. Let $G(C_{K^{\diff}})$ be the Picard-Vessiot differential Galois group of the extension $L/K$, a linear algebraic group defined over $C_K$. Then the usual algebraic Galois cohomology $$\HH^1_{\alg}(C_K,G)$$
classifies (it is a pointed set whose cardinality counts) exactly the set of distinct Picard-Vessiot extensions for $\mathcal{L}(y)=0$ in $K^{\diff}$. The base point of the cohomology set corresponds to the extension $L/K$.
\end{theorem}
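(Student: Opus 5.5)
This is the classical result of Kamensky--Pillay and Deligne--Milne, and I would prove it through Galois descent for $K$-forms of a torsor, working over the constants. Let $\U = K^{\diff}$ and $C = C_{\U}$. Since $\U$ is a differentially closed field, $C$ is algebraically closed. Fix the given Picard--Vessiot extension $L = K\langle b\rangle$, where $b$ is a fundamental matrix for $\mathcal{L}(y)=0$. By the torsor theorem (Theorem \ref{thm: cohomological fact}), the set $P$ of fundamental matrices realizing $\tp(b/K(C))$ is a right torsor for $G(C)$. Here $G$ is the binding group, which is defined over $C_K$: it is the stabilizer of the $K$-ideal of $b$ intersected with $\mathrm{GL}_n$ over the constants.

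\textbf{Step 1: describe all fundamental matrices.} Every fundamental matrix in $\U$ has the form $bg$ with $g \in \mathrm{GL}_n(C)$. A Picard--Vessiot extension is $K\langle bg\rangle$ such that $C_{K\langle bg\rangle} = C_K$. The extension is determined by the $K$-isomorphism type of the ideal of differential polynomial relations of $bg$ over $K$. This reduces the count to the set of $g$ for which the type of $bg$ over $K$ has no new constants. Two such $g, g'$ give the same field exactly when $bg' = bg\,h$ for some $h$ defined over $K$. Concretely, $g' \in g\,\mathrm{GL}_n(C_K)$ up to an element of $G(C)$ fixing the field.

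\textbf{Step 2: identify the space of fields.} I would consider the $C_K$-variety $Y = G\backslash \mathrm{GL}_n$, or rather the torsor-theoretic version. The condition ``$K\langle bg\rangle$ has constants $C_K$'' corresponds to $g$ representing a $C_K$-rational point of the appropriate quotient. By Kolchin--Kovacic, the torsor $Z = \mathrm{Spec}$ of the Picard--Vessiot ring $\otimes$ constants is a $G$-torsor over $C_K$. Then:
\begin{itemize}
\item Picard--Vessiot extensions inside $\U$ correspond bijectively to $C_K$-forms, namely to $G$-torsors $Z'$ over $C_K$ together with a twist of $Z$. Equivalently they correspond to classes in $\HH^1_{\alg}(C_K, G)$, obtained by sending $g \mapsto$ the cocycle $\sigma \mapsto g^{-1}\sigma(g)$ for $\sigma \in \mathrm{Gal}(C_K^{\alg}/C_K)$.
\item Here one uses that $C_K^{\alg} \subseteq C$, and that $\mathrm{Gal}(C_K^{\alg}/C_K)$ acts through $\aut(\U/K)$ restricted to $K(C_K^{\alg})$, by homogeneity of the prime model.
\end{itemize}
This gives a well-defined map from Picard--Vessiot extensions to $\HH^1_{\alg}(C_K,G)$ sending $L$ to the base point.

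\textbf{Step 3: bijectivity.} Injectivity comes from the computation in Step 1: equal cohomology classes yield $g' = g h c$ with $c$ rational over $C_K$, so $K\langle bg'\rangle = K\langle bg\rangle$. Surjectivity requires realizing every torsor class. Given a cocycle, I would first split it over a finite Galois extension $C_K \subseteq E \subseteq C$, using Hilbert 90 style descent for the algebraic torsor. This produces $g \in \mathrm{GL}_n(E)$, and I then check that $K\langle bg\rangle$ has no new constants. This last check is the main obstacle. It amounts to showing that the constants of the twisted field are the $\mathrm{Gal}$-invariants of $E$, which I would do by showing that the twisted Galois action fixes $bg$ and so descends. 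For existence inside $\U$, I would use that $\U$ is prime over $K$, which realizes the isolated type of a generic point of the twisted torsor over $K$.
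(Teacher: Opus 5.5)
\textbf{Where the proposal breaks.} Your skeleton is the concrete form of the paper's argument in Section \ref{sec: mult}. There, $Z$ is the set of fundamental matrices and $X_1=\mathrm{GL}_n$ of the constants, and an extension $K\langle b_1\rangle$ is sent to the transporter torsor $H_{d_1,d_0}$, a coset of $G$ in $\mathrm{GL}_n$. Surjectivity then comes from the long exact sequence for $G\le\mathrm{GL}_n$ together with triviality of $\HH^1$ of $\mathrm{GL}_n$; this is your rational-coset plus Hilbert 90 step.

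The gap is in your injectivity step, which conflates equality of subfields of $\U$ with $K$-isomorphism. Use the cocycle $\sigma\mapsto g\sigma(g)^{-1}$; yours, $g^{-1}\sigma(g)$, takes values in $g^{-1}Gg$, not $G$. Equality of classes then gives $g'=k\,g\,c$ with $c\in\mathrm{GL}_n(C_K)$ and $k\in G(C)$. The factor $k$ does not preserve the field. Take $\tau\in\aut(\U/K(C))$ with $\tau(b)=bk$; then $K\langle bg'\rangle=\tau(K\langle bg\rangle)$, a conjugate which is in general a different subfield of $\U$.

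\textbf{Counterexample.} Let $K=\mathbb{R}(t)$ with $t'=1$, let $\mathcal{L}(y)=y'-y$, let $u\in\U$ be a nonzero solution, and let $i\in C=\mathbb{C}$. Both $K(u)$ and $K(iu)$ are Picard--Vessiot extensions inside $\U$. They are different, since otherwise $i=iu/u$ would be a new constant of $K(u)$. Yet $G=\mathbb{G}_m$ and $\HH^1_{\alg}(\mathbb{R},\mathbb{G}_m)=1$.

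So your map from Picard--Vessiot subfields of $\U$ to $\HH^1_{\alg}(C_K,G)$ is well defined and surjective but not injective. The theorem must be read with extensions counted up to $K$-isomorphism, equivalently up to $\aut(\U/K)$-conjugacy, since $\U$ is prime over $K$. Your Step 1 should therefore read: $K\langle bg\rangle\cong_K K\langle bg'\rangle$ if and only if $g'\in G(C)\,g\,\mathrm{GL}_n(C_K)$. The left factor $G(C)$ is exactly what $\aut(\U/K(C))$ does to $b$. This matches the paper, whose assignment $b_1\mapsto[H_{d_1,d_0}]$ depends only on $d_1$, i.e. on $\tp(b_1/\dcl(K,C))$, and so is automatically constant on conjugates.

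\textbf{Two smaller points.} First, you need $C=C_K^{\alg}$, not merely $C_K^{\alg}\subseteq C$; this holds because $\U$ is atomic over $K$. Otherwise matrices $g$ with entries transcendental over $C_K$ are invisible to $\mathrm{Gal}(C_K^{\alg}/C_K)$-cocycles.

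Second, the step you flag as the main obstacle is short and needs no descent of a twisted action. Since $L$ is Picard--Vessiot, $\tp(b/K)\vdash\tp(b/K(C))$, so every $\sigma\in\aut(\U/K)$ satisfies $\sigma(b)\in bG(C)$. Hence $\sigma(bG(C)g)=bG(C)\sigma(g)$. Thus the set $bG(C)g$ of realizations of $\tp(bg/K(C))$ is $\aut(\U/K)$-invariant, hence $K$-definable, exactly when $Gg$ is a $C_K$-rational point of $G\backslash\mathrm{GL}_n$. $K$-definability of this set is precisely weak orthogonality of $\tp(bg/K)$ to the constants, i.e. $C_{K\langle bg\rangle}=C_K$. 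This is the condition $d_1\in O(A)$ in Proposition \ref{prop: binding groupoid}. Existence in $\U$ is automatic, since $bg\in\U$.

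With the equivalence relation corrected, Hilbert 90 and the exact sequence $\mathrm{GL}_n(C_K)\to(G\backslash\mathrm{GL}_n)(C_K)\to\HH^1_{\alg}(C_K,G)\to 1$ give the bijection with isomorphism classes, with $L$ (i.e. $g=1$) going to the base point.
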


The generalization of Theorem \ref{fact: counting PV} shown in Section \ref{sec: mult} is the following:

\begin{theorem} Let $T$ be a totally transcendental theory eliminating imaginaries. Let $A$ be a $\dcl$-closed set of parameters and $M$ be a copy of the prime model over $A$. Let $Y$ and $X$ be $A$-definable sets with $Y$ internal to $X$. Produce $f:Z\times X_1 \to Z$ as in the construction of the binding groupoid (given in detail in Section \ref{sec: binding}) where $Z$ is the $A$-definable set of ``fundamental systems" associated to $Y$, $X_1$ is an $A$-definable set contained in $\dcl^{eq}(A,X)$, and $f$ is an $A$-definable function such that for any $b_1,b_2 \in Z$, there is a unique $c \in X_1$ with $f(b_1,c) = b_2$. Then assuming

\begin{enumerate}
    \item there is at least one $b_0 \in Z(M)\backslash Z(A)$ with $\dcl(A,b_0) \cap \dcl(A,X) = A$ so that $A \subsetneq \dcl(A,b_0)$ is a proper definable Galois extension in the sense of \cite{OmarAnand} with extrinsic definable Galois group $G_0$,
    \item the $A$-definable set $X_1$ carries an $A$-definable group structure such that $Z$ is exactly an $A$-definable right PHS for $X_1$ with action $f$,
    \item and $\HH^1_{\defin}(M/A,X_1(M))$ is trivial
\end{enumerate}

we have that $\HH^1_{\defin}(M/A,G_0(M))$ classifies exactly the set of definable Galois extensions in $M$ for the internality data $(Y,X)$. The base point of the cohomology set corresponds exactly to the extension $A \subsetneq \dcl(A,b_0)$. 

\end{theorem}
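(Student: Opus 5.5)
Two definable Galois extensions for $(Y,X)$ in $M$ will be identified with $A$-definable right PHSs for $G_0$, i.e. with classes in $\HH^1_{\defin}(M/A,G_0(M))$. The plan runs in two directions, each using the torsor correspondence of Theorem \ref{thm: cohomological fact}.

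First, fix $b_0$ as in hypothesis (1) and set $p_0=\tp(b_0/A)$. Every definable Galois extension for $(Y,X)$ inside $M$ has the form $\dcl(A,b)$ with $b\in Z(M)$. By weak orthogonality, $\tp(b/A,X)$ is isolated by $\tp(b/A)$, so $Z$ is partitioned into $A$-definable pieces, one per complete type. Concretely, in the tt prime model each such type is isolated by a formula $\phi_b(z)$, and distinct types give distinct extensions up to interdefinability. I would then check that $Q_b:=\phi_b(M)$ is an $A$-definable right PHS for the extrinsic Galois group $G_b$, the stabilizer in $X_1$ of $Q_b$ under $f$. By Theorem \ref{thm: cohomological fact}, $G_b$ acts regularly on realizations of $\tp(b/A,X)$, hence on $Q_b$.

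Second, I will show that each $G_b$ is an $A$-definable inner form of $G_0$. Using hypothesis (2), $Z$ is an $X_1$-torsor. Choosing $c\in X_1(M)$ with $b_0\cdot c = b$, conjugation by $c$ carries $G_0$ onto $G_b$. The cocycle $\sigma\mapsto c\,\sigma(c)^{-1}$ for $\sigma\in\aut(M/A)$ then lands in $G_0$ up to the usual twisting, since $\sigma(b)$ and $b$ are both in $Q_b$. Since $b\in M$, the resulting $G_0$-torsor $\{c\in X_1 : b_0\cdot c\in Q_b\}$ is $A$-definable and has an $M$-point. This gives a map from extensions to $\HH^1_{\defin}(M/A,G_0(M))$ sending $\dcl(A,b_0)$ to the base point.

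Injectivity amounts to this: isomorphic torsors correspond to the same type of fundamental system, because an $A$-definable isomorphism of torsors transports $\phi_b$ to $\phi_{b'}$, so $\dcl(A,b)=\dcl(A,b')$.

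Surjectivity is the main obstacle. Given an $A$-definable $G_0$-torsor $P$ with a point in $M$, I need to produce $b$. I would push $P$ forward along the inclusion $G_0\le X_1$ to the $X_1$-torsor $P\times^{G_0}X_1$. By hypothesis (3) this torsor is trivial, so its class lies in the kernel of $\HH^1(G_0)\to\HH^1(X_1)$. By the exact sequence for the inclusion of \cite{MERETZKY_SES}, that kernel is the orbit set $G_0(M)\backslash X_1$ at the $A$-rational level, i.e. $A$-definable orbits of $G_0$ on $X_1$ intersected with $M$. Transporting along $c\mapsto b_0\cdot c$, these orbits become $A$-definable orbits on $Z$, i.e. pieces $Q_b$. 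Each piece contains a point of $M$, which exists because $P$ has an $M$-point, and that point gives the required $b$.

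Two points remain to check. The first is that $\dcl(A,b)\cap\dcl(A,X)=A$, which follows because $Q_b$ is a single $\aut(M/A,X)$-orbit. The second is the identification of the twisted class with $[P]$, which is a standard cocycle computation.
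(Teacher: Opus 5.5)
Your surjectivity half is the paper's argument. Triviality of $\HH^1_{\defin}(M/A,X_1(M))$ and the long exact sequence (Theorem \ref{thm: LES}) make $P$ an $A$-definable coset of $G_0$ in $X_1$. Translating $b_0$ by it gives an $A$-definable orbit $Q_{d_1}$, which has an $M$-point simply because it is a nonempty $A$-definable set, not because $P$ does.

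The genuine gap is in the other half, where you identify definable Galois extensions with complete types of fundamental systems. This fails in both directions.
\begin{itemize}
\item \emph{Distinct types can give the same extension.} If $a\in X_1(A)\setminus G_b$, then $f(b,a)$ generates the same extension $\dcl(A,b)$ but lies in the different piece $Q_b\cdot a\neq Q_b$. For example, take a fundamental matrix $b$ of $y''=y$ over $\mathbb{C}(t)$ and $b\cdot\mathrm{diag}(2,1)$. So you must check that your class does not depend on the chosen generator, and you never do. The paper's argument carries out exactly this check: if $\dcl(A,b_1)=\dcl(A,b_2)$ then $b_2=f(b_1,a)$ with $a\in\dcl(A,b_1)\cap\dcl(A,X)=A$, and translation by $a$ is an $A$-definable isomorphism of the associated torsors.
\item \emph{Your injectivity step is wrong as written.} An $A$-definable isomorphism $\phi\colon G_0c\to G_0c'$ of your torsors is right translation by $g=x^{-1}\phi(x)$. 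This $g$ is $\aut(\bar M/A)$-invariant, hence lies in $X_1(A)$. The isomorphism therefore gives $Q_{b'}=Q_b\cdot g$, not $Q_{b'}=Q_b$.
\item \emph{Equal types give only conjugate extensions.} Two realizations of the same type generate $\aut(M/A)$-\emph{conjugate} extensions, not equal ones.
\end{itemize}
The correct conclusion is: $b'\equiv_A f(b,g)$ and $\dcl(A,f(b,g))=\dcl(A,b)$, so by strong $\omega$-homogeneity some $\sigma\in\aut(M/A)$ maps $\dcl(A,b)$ onto $\dcl(A,b')$. In other words, the fibres of $\delta$ are the $X_1(A)$-orbits on $(X_1/G_0)(A)$. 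What is classified is extensions up to $A$-conjugacy, not as subsets of $M$.

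This is not cosmetic, but you have to look at the right statement to see it. Read literally, hypotheses (1)--(3) are inconsistent. By (2), $Z$ is itself an $A$-definable right $X_1$-torsor, so (3) gives some $a\in Z(A)$. Then $b_0=f(a,c)$ with $c\in\dcl(A,b_0)\cap\dcl(A,X)=A$, which forces $b_0\in Z(A)$, contradicting (1). Relatedly, $Q_{d_0}$ is an $A$-definable $G_0$-torsor with no $A$-point that is isomorphic to no coset torsor. So $\HH^1_{\defin}(M/A,G_0(M))$ is the wrong target in any case.

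The version both arguments are really about is the analogue of $\HH^1(C_K,GL_n)=1$. It takes cohomology over $\dcl(A,X(M))$, where the coset torsors live. In that version the equality $\dcl(A,b)=\dcl(A,b')$ genuinely fails. Take $y'=y$ over $K=\mathbb{R}(t)$, with $X_1=G_0=\mathbb{G}_m$ on the constants. Hilbert 90 makes both cohomology sets trivial. Yet the fields $K(ce^t)$, $c\in\mathbb{C}^*/\mathbb{R}^*$, are pairwise distinct definable Galois extensions inside $K^{\diff}$, all conjugate to $K(e^t)$. So your identification of extensions with types has to be replaced by the $X_1(A)$-orbit and conjugacy bookkeeping described above.
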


In Section \ref{sec: mult} we comment in more detail on the various assumptions. It is stated in a slightly more general form where we also consider the case $\HH^1_{\defin}(M/A,X_1(M))$ is nontrivial i.e., assumption 3) is dropped. In general, there seems to be no reason why assumption 2) needs to hold, i.e., $X_1$ needs to carry a group structure making $(Z,X_1)$ into an $A$-definable PHS with action $f$. However in practice this very often is the case. Moreover, from the perspective of a fixed extension, this can always be taken to be the case as can be seen by the main result of \cite{OmarDavid2026}. 

%In Section \ref{sec: internality} we set conventions and notation around the notion of internality. 

%In Section \ref{sec: binding} we review the construction of the Galois groupoid arising from internality data in the totally transcendental setting. We go through standard arguments defining the triple $(f, Z, X_1)$ from the internality data $(X,Y)$. We then proceed to construct the definable Groupoid action and intrinsic group which all together we refer to as the Galois groupoid.  

The forward direction this theorem in the setting of $DCF_0$, i.e. that under suitable hypotheses $\HH^1_{\defin}(M/A,G_0(M))$ bounds the number of definable Galois extensions (this direction not needing assumption 3), is used in the existence results for strongly normal extensions of \cite{KamenskyPillay}. Much work has been done to extend these results to $DCF_{0,m}$ \cite{omarronnie}, \cite{omaranandppv} \cite{OmarAnandDavid}, but the actual extension, even just with respect to the constants, is not fully complete at the time of this paper in the strongly normal setting of $DCF_{0,m}$. We hope that the above theorem will find use extending these existence results to the generalized strongly normal theory, but the result should be of independent interest.

One can also see by inspection that the forward direction follows from the construction of the binding groupoid which can be seen as a generalization of the ``torsor theorem" of differential Galois theory which we give a detailed account of in Section \ref{sec: binding}.  The reverse direction is a straightforward application of the long exact sequence in cohomology developed in \cite{OmarAnandDavid}. The proof here is an adaptation the proof of Theorem \ref{fact: counting PV} via definable Galois cohomology given in Chapter 3 of \cite{MeretzkyThesis}. The original proof goes back to at least \cite{DeligneMilne2022} and is shown via tannakian methods. That there should be a generalization of this fundamental Theorem \ref{fact: counting PV} via the binding-groupoid should maybe be unsurprising given the close connection between the tannakian formalism and the model-theoretic notion of the binding groupoid associated to internality data \cite{moshetannak}, \cite{KamenskyPillay}.

 %(Aside: What if anything can be said when the internality data is assumed to be related to the covering map context of Scanlon?) 

%In the Picard-Vessiot setting where the internality data is exactly a homogeneous LODE, the set of fundamental systems in $K^{\diff}$, $Z(K^{\diff})$ form a right principal homogeneous space for $GL_n(C_{K^{\diff}}) = X_1(K^{\diff})$.

%In \cite{AnandDavidComm} we proposed some analogous definitions suitable for differential fields. 

\section{Acknowledgments} 

I would like to thank Anand Pillay for his generous advising and for introducing me to the circle of ideas above. Much of the material here appears in some form in my thesis completed under his direction. I want to thank Omar León Sánchez for his encouragement and advice and for arranging a visit to the University of Manchester Summer 2025 where I began writing this document. I would like to thank the model theory group at la Universidad de los Andes for reading through this, and related material, as part of the model theory seminar in Fall 2025. I would like to thank Pablo Cubides-Kovacics for his careful reading and comments around the material of Section \ref{sec: mult}. No AI was used in this project at any stage.

\section{Prime models, the minimal closure, and Galois correspondences}\label{sec: min}

In this section we review and collate some key definitions and constructions. 

Let $T$ be a complete totally transcendental (t.t.) theory eliminating imaginaries. Let $\bar{M}$ be a monster model of $T$ which we will assume to be $\kappa$-saturated and strongly $\kappa$-homogeneous for a suitably sized cardinal $\kappa$. Let $A$ be a small set of parameters.

As $T$ is t.t., the isolated types in $S_n(A)$ are dense and consequently constructible models $M$ over $A$ exist. By the back-and-forth argument of Resayrre, constructible models are unique up to $A$-isomorphism and strongly $\omega$-homogeneous over $A$ i.e. for tuples $a$ and $b$ from $M$ with the same type over $A$, there is an automorphism $\sigma \in \aut(M/A)$ taking $a$ to $b$. Constructible models are also atomic over $A$. Shelah proved the following characterization of constructibility:  In a t.t. theory a model $M$ over $A$ is constructible if and only if it is atomic over $A$ and has no uncountable $A$-indiscernible sequence. It is clear that a constructible model is prime over $A$, and as we said, in the t.t. setting constructible models exist. A prime model $N$ over $A$ must embed in a constructible model over $A$, it is then clear that $N$ satisfies the criterion of Shelah's characterization and so is then itself constructible. The uniqueness atomicity, and strong $\omega$-homogeneity of the prime model then follow. Alternatively one can show directly (without Shelah's characterization) that an arbitrary subset of a constructible model is still constructible in the t.t. setting.

Let $M$ be a copy of the prime model over $A$ of $T$. An intermediate extension $A \subseteq B \subseteq M$, is called normal if $\forall b \in B$ and $b' \in M$, $\tp(b/A) = \tp(b'/A)$ implies that $b' \in B$. By the strong $\omega$-homogeneity of $M$ over $A$, we have a short exact sequence of groups $1 \to \aut(M/B) \to \aut(M/A) \to \aut(B/A) \to 1$. Finally, we remark that $M$ remains prime over $B$. 

Although the prime model over a set of parameters of a t.t. theory is unique up to isomorphism over that set, it is not necessarily minimal, i.e., it may properly elementarily embed into itself. 

\begin{definition}\cite{PoizatPrime}
    Let $M$ be a copy of the prime model over $A$. Recall, the minimal closure of $A$ in $M$ is the intersection $\cap_{\gamma}\gamma(M)$ where $\gamma:M \to M$ ranges over all elementary embeddings of $M$ into itself.  We denote this $\mcl(A)$ in analogy with the model theoretic definable and algebraic closures, denoted $\dcl$ and $\acl$, respectively. A type $p \in S(A)$ is called atomically algebraic if it is isolated but has no infinite $A$-indiscernible set of realizations which is atomic over $A$. An element $a \in \bar{M}$ is atomically algebraic over $A$ if $\tp(a/A)$ is atomically algebraic. 
\end{definition}

\begin{fact}\cite{PoizatPrime}\label{fact: mcl}
    Let $M$ be prime over $A$ in $\bar{M}$. 
    \begin{enumerate}
        \item $\mcl(A)$ is equal to the set of atomically algebraic elements in $M$. Consequently, $M = \mcl(A)$, i.e. $M$ is minimal, if and only if it contains no infinite atomic indiscernible sequence.
        \item $mcl(mcl(A)) = \mcl(A)$
        \item $M$ remains constructible and hence prime over any subset of $mcl(A)$. 
        \item For any subsets $B$ and $B'$ of $mcl(A)$, if $tp(B/A) = tp(B'/A)$, there is an automorphism $\sigma \in \aut(M/A)$ taking $B$ to $B'$.
    \end{enumerate}
\end{fact}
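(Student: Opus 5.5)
The statement to prove is the last one given, Fact \ref{fact: mcl}, attributed to \cite{PoizatPrime}. I would prove the four items in order, each using the previous ones together with the structure of constructible models in a t.t.\ theory.

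\emph{Item (1).} The approach is to compare membership in every image $\gamma(M)$ with atomic algebraicity.

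First, let $a\in M$ be atomically algebraic over $A$ and let $\gamma:M\to M$ be an elementary embedding over $A$. Suppose $a\notin\gamma(M)$. Since $M$ is prime over $\gamma(M)\cup\{a\}$ (subsets of a constructible model are constructible in the t.t.\ setting), I would iterate $\gamma$ and use $\tp(\gamma^n(a)/A)=\tp(a/A)$ to produce the elements $a,\gamma(a),\gamma^2(a),\dots$. These should be pairwise distinct: $\gamma^{n}(a)\notin \gamma^{n+1}(M)$, while $\gamma^{m}(a)\in\gamma^{n+1}(M)$ for $m>n$. This gives infinitely many realizations of $\tp(a/A)$ inside $M$, hence an atomic set. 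By Ramsey and compactness, combined with the fact that in a t.t.\ theory an infinite set of realizations of an isolated type inside an atomic model contains an infinite indiscernible subsequence that stays atomic (Morley rank / total transcendence), we contradict atomic algebraicity.

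Conversely, suppose $a\in M$ is not atomically algebraic, witnessed by an infinite atomic $A$-indiscernible set $I$ of realizations of $\tp(a/A)$. I would stretch $I$ to a larger atomic indiscernible set. I would then realize the situation inside $M$: build the prime model over $A\cup I$, which embeds into $M$ over $A$. Shifting the indiscernible sequence by one, i.e.\ dropping $a$ from $I$, gives an elementary self-embedding whose image is prime over $I\setminus\{a\}$ and omits $a$.

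\emph{Main obstacle.} The step I expect to be hardest is showing that $a$ is not in the prime model over $A\cup(I\setminus\{a\})$. This is where I would use Shelah's characterization, that a constructible model has no uncountable $A$-indiscernible sequence, together with the standard fact that in a t.t.\ theory an indiscernible set over $A$ is indiscernible over the constructible model of any part of it with respect to the remaining elements. The minimality consequence then follows directly from the equivalence.

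\emph{Item (2).} Atomic algebraicity over $A$ and over $\mcl(A)$ should coincide for elements of $M$. Indeed, every element of $\mcl(A)$ is isolated over $A$ and has finitely many atomic realizations in any fixed atomic model, so an atomic indiscernible set over $\mcl(A)$ is also one over $A$. Combined with (1) applied over $\mcl(A)$, which is legitimate by (3), this gives $\mcl(\mcl(A))\subseteq\mcl(A)$. The reverse inclusion is trivial.

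\emph{Item (3).} Fix $B$ with $A\subseteq B\subseteq\mcl(A)$. Each $b\in B$ is isolated over $A$, so $M$ is atomic over $B$: isolate each finite tuple of $B$ together with $m\in M$ over $A$. $M$ still has no uncountable $B$-indiscernible sequence, since such a sequence would also be $A$-indiscernible. Shelah's characterization then gives constructibility, and hence primeness, over $B$.

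\emph{Item (4).} By (3), $M$ is prime over $B$ and over $B'$. The map $B\to B'$ is elementary over $A$, so by uniqueness of prime models it extends to an isomorphism $\sigma:M\to M$ over $A$. It is automatic that $\sigma$ is surjective onto $M$, since both copies are prime over the respective sets and uniqueness gives an isomorphism rather than just an embedding.
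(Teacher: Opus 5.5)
The paper gives no proof of this Fact (it is quoted from Poizat), so your sketch has to stand on its own. It does not: the substantive parts of (1), (2) and (3) each rest on a step that fails.

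\textbf{Item (1), hard half.} This is the direction ``atomically algebraic implies membership in every $\gamma(M)$''. The only thing you extract from $\gamma$ is that $a,\gamma(a),\gamma^2(a),\dots$ are infinitely many realizations of $p=\tp(a/A)$ in $M$. You then appeal to a ``fact'' that such a set contains an infinite indiscernible subset which is atomic over $A$. That fact is false.

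In $\DCF_0$, let $K=\mathbb{Q}^{\alg}$ with trivial derivation, and let $t\in K^{\diff}$ with $t'=1$. The formula $y'=1$ isolates $\tp(t/K)$, and its realizations in $K^{\diff}$ are the infinitely many $t+c$ with $c\in\mathbb{Q}^{\alg}$. Any two realizations differ by a constant. A pair has isolated type over $K$ only if this constant lies in $K$; otherwise it is a constant transcendental over $K$, whose type is not isolated. For an indiscernible set, swapping two elements then forces $c=-c$, so $c=0$. Hence $\tp(t/K)$ is atomically algebraic (as it must be, since $t\in K^{PV_1}\subseteq\mcl(K)$), yet it has infinitely many realizations in the prime model.

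Ramsey plus compactness only yields an indiscernible sequence in $\bar M$ with the same EM-type. Nothing forces its finite subtuples to have isolated types over $A$. So the direction carrying the real content of Poizat's theorem is missing. It needs an argument that genuinely exploits the fact that $a$ lies outside the elementary submodel $\gamma(M)$, itself prime over $A$; a count of realizations cannot suffice. The same example refutes your claim in (2) that elements of $\mcl(A)$ have only finitely many realizations in $M$.

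\textbf{Item (3).} Your atomicity argument never uses $B\subseteq\mcl(A)$, and it proves too much. Knowing that $\tp(m/A\bar b)$ is isolated for each finite $\bar b\subseteq B$ gives no single formula isolating $\tp(m/B)$ when $B$ is infinite. For example, let $I\subseteq M$ be an infinite $A$-indiscernible set (such as the generic solutions of Rosenlicht's equation $y'=y^3-y^2$ in $\mathbb{Q}^{\diff}$), let $a\in I$, and put $B=A\cup(I\setminus\{a\})$. Every element of $B$ is isolated over $A$, yet $\tp(a/B)$ is not isolated. Indeed, a formula over $A$ and finitely many elements of $I\setminus\{a\}$ that is true of $a$ is also true of any $a'\in I$ outside $\{a\}$ and those parameters, while $x\neq a'$ belongs to $\tp(a/B)$.

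So $M$ is not atomic over arbitrary subsets. This also invalidates the justification you give for your unused aside that $M$ is prime over $\gamma(M)\cup\{a\}$. Atomicity over subsets of $\mcl(A)$ is exactly what has to be proved using the minimal closure.

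\textbf{Item (2).} Here the inference runs the wrong way. ``An atomic $\mcl(A)$-indiscernible set is an atomic $A$-indiscernible set'' only gives $\mcl(A)\subseteq\mcl(\mcl(A))$, which is the trivial inclusion. For $\mcl(\mcl(A))\subseteq\mcl(A)$ you can argue as follows.
\begin{enumerate}
\item An $A$-embedding $\gamma$ maps $\mcl(A)$ onto itself. Membership in $\mcl(A)$ depends only on the type over $A$, by strong $\omega$-homogeneity, and $\mcl(A)\subseteq\gamma(M)$.
\item By (4), choose $\tau\in\aut(M/A)$ agreeing with $\gamma$ on $\mcl(A)$.
\item Then $\tau^{-1}\gamma$ is an embedding over $\mcl(A)$. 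Since $\mcl(\mcl(A))$ is $\aut(M/A)$-invariant, this gives $\mcl(\mcl(A))\subseteq\gamma(M)$.
\end{enumerate}
So (2) belongs after (3) and (4). Your (4) is fine once (3) is available.

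\textbf{Item (1), easy half.} The prime model over $A\cup I$ does not embed into $M$ for free. Take $I$ countable and use Shelah's characterization to see that this model is prime over $A$, hence a copy of $M$. Your ``main obstacle'' is then immediate from the non-isolation of $\tp(a/A\cup(I\setminus\{a\}))$ shown above, since the image of the shift is atomic over that set.
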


%\begin{example}
%    \begin{enumerate}
%        \item Linear ODEs.
%        \item Non-example of Rosenlicht.
%    \end{enumerate}
%\end{example}

We also then have now three types of intermediate extensions $A \subseteq B \subseteq M$ over which the prime model remains prime, and hence atomic and strongly $\omega$-homogeneous: Firstly, finitely $\dcl$-generated extensions, i.e. when $B = \dcl(A,b)$ for a finite tuple $b$. Secondly for normal extensions $B$. Thirdly for arbitrary subsets $B$ of $mcl(A)$.  The use of this is principally in the Galois definability lemma in the context of a prime model (see \cite{meretzky2026galoistheoryautomorphismgroups} for instance):

\begin{fact}\cite{meretzky2026galoistheoryautomorphismgroups}
Let $A \subseteq B \subseteq M$ be an intermediate extension over which $M$ remains prime. 

\begin{enumerate}
    \item Let $X$ be an $M$ definable set and let $X(M)$ be $\aut(B/A)$-invariant. Then $X$ is $B$-definable.
    \item The fixed points of the action of $\aut(M/B)$ on $M$, denoted $M^{\aut(M/B)}$ is exactly $\dcl(B)$.
\end{enumerate} 
\end{fact}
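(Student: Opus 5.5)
The plan is to prove (2) first by a direct homogeneity argument. Part (1) then reduces to (2) by elimination of imaginaries. Throughout, the only input about $B$ is that $M$ remains prime over $B$. By the discussion at the start of this section, $M$ is then constructible over $B$, hence atomic over $B$ and strongly $\omega$-homogeneous over $B$: any two finite tuples of $M$ with the same type over $B$ are conjugate by some $\sigma \in \aut(M/B)$. I read the hypothesis in (1) as invariance under $\aut(M/B)$, since that is the group acting on $M$; the "$\aut(B/A)$" in the statement appears to be a typo.

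For (2), the inclusion $\dcl(B)\cap M \subseteq M^{\aut(M/B)}$ is clear. Conversely, let $e \in M$ be fixed by every $\sigma \in \aut(M/B)$.
\begin{enumerate}
\item By atomicity, $\tp(e/B)$ is isolated by a formula $\psi(x,b)$ with $b$ a finite tuple from $B$.
\item Suppose $\psi(x,b)$ had two distinct realizations in $\bar M$. Then $\bar M \models \exists x\,(\psi(x,b)\wedge x\neq e)$.
\item This sentence has parameters in $M$, so by elementarity some $e' \in M$ with $e'\neq e$ satisfies $\psi(x,b)$.
\item Since $\psi$ isolates $\tp(e/B)$, we get $\tp(e'/B)=\tp(e/B)$.
\item Strong $\omega$-homogeneity over $B$ then gives $\sigma\in\aut(M/B)$ with $\sigma(e)=e'\neq e$, contradicting that $e$ is fixed.
\end{enumerate}
Hence $\psi(x,b)$ has a unique realization, and $e\in\dcl(b)\subseteq\dcl(B)$. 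Tuples are handled the same way.

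For (1), let $X=\varphi(\bar M,m)$ with $m\in M$. By elimination of imaginaries, $X$ has a canonical parameter $e$, a real tuple in $\dcl(m)\subseteq M$. Its key property is that for every automorphism $\tau$ of $\bar M$, $\tau(X)=X$ if and only if $\tau(e)=e$. Let $\sigma\in\aut(M/B)$ and consider the $M$-definable set $\sigma(X)$.
\begin{enumerate}
\item By invariance, $\sigma(X)\cap M=X\cap M$.
\item Two $M$-definable sets with the same $M$-points are equal, by elementarity of $M\preceq\bar M$. So $\sigma(X)=X$ as definable sets.
\item The defining property of the canonical parameter then gives $\sigma(e)=e$; to see this inside $M$, note that the formula $\forall x\,(\varphi(x,m)\leftrightarrow\varphi(x,\sigma(m)))$ holds in $M$ and that $e$ is defined uniformly from $m$.
\item Hence $e$ is fixed by $\aut(M/B)$, so by (2) we get $e\in\dcl(B)$, and $X$ is $B$-definable.
\end{enumerate}

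The main point requiring care is the homogeneity input. Strong $\omega$-homogeneity is only over $B$ for finite tuples, and $B$ may be infinite and is not assumed to be a model, so it must come from constructibility of $M$ over $B$ exactly as in the Ressayre back-and-forth recalled above. One should also check that passing between "fixed by $\aut(M/B)$" and "fixed by automorphisms of $\bar M$ over $B$" never requires extending automorphisms of $M$ to $\bar M$. The argument above avoids this by working only with formulas over $M$ and elementarity.
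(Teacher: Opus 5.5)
Your proof is correct, including your reading of the hypothesis in (1) as $\aut(M/B)$-invariance of $X(M)$; as written, $\aut(B/A)$ does not act on $M$.

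The paper gives no proof of this Fact and only cites \cite{meretzky2026galoistheoryautomorphismgroups}. Your route is the standard proof of the Galois definability lemma: atomicity plus strong $\omega$-homogeneity of $M$ over $B$ give (2), and a canonical parameter from elimination of imaginaries reduces (1) to (2). It uses exactly the properties of $M$ over $B$ recalled in Section 3.

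One simplification is available: the care you take in step 3 of (1) is unnecessary. Since $M$ is small and $\bar{M}$ is strongly $\kappa$-homogeneous, every $\sigma\in\aut(M/B)$ is a partial elementary map of $\bar{M}$. It therefore extends to an automorphism of $\bar{M}$ fixing $B$ pointwise, and the defining property of the code applies to that extension directly.
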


This fact underpins both definable Galois theory and definable Galois cohomology in the setting of a prime model of a totally transcendental theory:

We need a few more definitions from \cite{meretzky2026galoistheoryautomorphismgroups}.

\begin{definition}\cite{meretzky2026galoistheoryautomorphismgroups}
    Let $B$ be a $\dcl$-closed intermediate set $A \subseteq B \subseteq \mcl(A) \subseteq M$. Let $\bar{b}$ be an enumeration of $B$.  Let $p_0(\bar{x})$ be $\tp(\bar{b}/A)$, and $S_{p_0}(B)$ the space of extensions of $p_0$ to $B$. Let $X$ be a closed subset of $S_{p_0}(B)$ given by a set of formulas $\Sigma(\bar{x},\bar{b})$ and $Y$ be a clopen subset of $S_{p_0}(B)$ given by a formula  $\varphi(x,b)$ for some $b$, a finite tuple from $B$.
    \begin{enumerate}
        \item By $Y(\aut(B/A))$ we mean a subset of the form $$\{\sigma \in \aut(B/A) : \ \models \varphi(b,\sigma(b))\}$$ which we call a relatively definable subset of $\aut(B/A)$. 
        \item By $X(\aut(\bar{M}/A))$ we mean $$\{\sigma \in \aut(\bar{M}/A) : \ \models \Sigma(\bar{b},\sigma(\bar{b}))\}$$ and we call this a relatively type definable over $B$ subset of $\aut(\bar{M}/A)$.
        \item By $X(\aut(B/A))$ we mean $X(\aut(\bar{M}/A))|_{\aut(B/A)}$, i.e. a subset of $\aut(B/A)$ of the form $$\{\sigma \in \aut(B/A) : \ \models \Sigma(\bar{b},\sigma(\bar{b}))\}$$ which we call a relatively type definable over $B$ subset of $\aut(B/A)$.
    \end{enumerate}
    A relatively (type) definable subgroup of $\aut(B/A)$ is then a relatively (type) definable subgroup of $\aut(B/A)$.
\end{definition}

The following theorem generalizes the Galois correspondence for $\acl^{eq}(A)$ to $\mcl^{eq}(A)$.

\begin{theorem}\cite{meretzky2026galoistheoryautomorphismgroups}
    Let $B$ be a $\dcl$-closed normal intermediate set $A \subseteq B \subseteq \mcl(A) \subseteq M$. Then there is a Galois correspondence between $dcl$-closed subsets $C$ of $B$ containing $A$, and closed subgroups $H$ of $\aut(B/A)$. Namely, $$C \mapsto H_C:= \{\sigma\in \aut(B/A) : \ \sigma(c) = c \ \forall c \in C\}$$ and $$H \mapsto B^H:= \{b \in B : h(b)= b \  \forall h \in H\}.$$ Moreover, finitely $\dcl$-generated extensions correspond to relatively definable subgroups and normal intermediate extensions correspond to normal relatively type definable subgroups.  
\end{theorem}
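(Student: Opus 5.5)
The plan is to follow the classical Galois correspondence for $\acl^{eq}(A)$ and profinite Galois groups, replacing finiteness and compactness with the two facts available for subsets of $\mcl(A)$: the Galois definability lemma (the fact that $M^{\aut(M/C)}=\dcl(C)$ for every $C\subseteq \mcl(A)$ over which $M$ is prime), and Fact \ref{fact: mcl}(3),(4). First, $\aut(B/A)$ receives the topology of pointwise convergence, with basic open sets given by the relatively definable subsets of the definition above. By normality of $B$ and strong $\omega$-homogeneity, the restriction map $\aut(M/A)\to\aut(B/A)$ is surjective, so $\aut(B/A)$ acts on $B$. Both maps are then well defined and inclusion-reversing. $H_C$ is closed, since it is an intersection of stabilizers of points, and $B^H$ is $\dcl$-closed, since automorphisms fix $\dcl$ of fixed points.

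The first composite to check is $B^{H_C}=C$ for $\dcl$-closed $C$ with $A\subseteq C\subseteq B$. Since $C\subseteq \mcl(A)$, $M$ is prime over $C$ by Fact \ref{fact: mcl}(3), and the Galois definability fact gives $M^{\aut(M/C)}=\dcl(C)=C$. Every $\sigma\in\aut(M/C)$ preserves $B$ because $B$ is normal, so $\aut(M/C)$ restricts onto $H_C$. Hence any $b\in B$ fixed by $H_C$ is fixed by $\aut(M/C)$, and therefore lies in $C$.

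The second composite, $H_{B^H}=H$ for closed $H$, is where I expect the main obstacle. Put $C=B^H$ and take $\tau\in H_C$; it suffices to show that every basic neighbourhood of $\tau$ meets $H$. Fix a finite tuple $b$ from $B$. Consider the orbit $Hb$, and let $E=\dcl(A,\text{code of } Hb)$, or more precisely the set of elements coding finite $H$-invariant data. The difficulty is that $Hb$ may be infinite, so there need not be a finite code. I would argue as follows. The set $\{\sigma(b):\sigma\in\aut(M/C)\}$ is the set of realizations in $M$ of $\tp(b/C)$. By Fact \ref{fact: mcl}(4), elements of $B$ having the same type over $C$ are conjugate under $\aut(M/C)$. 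Hence it is enough to show that $\tau(b)$ and $b$ have the same type over $C$, which is immediate, and then to approximate: we need some $h\in H$ with $h(b)=\tau(b)$. Suppose not. Then $Hb$ is an $H$-invariant subset of the realizations of $\tp(b/C)$ in $B$ omitting $\tau(b)$. Because $b$ is atomically algebraic, $\tp(b/C)$ is isolated with finitely many realizations in $\mcl$-closure arguments, or at least $Hb$ is relatively definable. In that case its canonical parameter $e$ lies in $B$ by normality and is fixed by $H$, so $e\in C$, and then $\tau$ fixes $e$ and must send $b$ into $Hb$, a contradiction. Making this ``relatively definable orbit'' step rigorous is the crux. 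I would first try to get it from closedness of $H$ together with the isolation of $\tp(b/A)$ (atomicity of $M$), which lets one treat $H$-orbits as definable over an imaginary that lies in $\dcl^{eq}(B)$, using elimination of imaginaries.

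For the moreover clause, if $C=\dcl(A,c)$ for a finite tuple $c$, then $H_C=\{\sigma:\models \varphi(c,\sigma(c))\}$, where $\varphi$ isolates $\tp(c/A)\cup\{x=c\}$ restricted appropriately; this is $x=c$ itself, so $H_C$ is relatively definable. Conversely, a relatively definable subgroup given by $\varphi(b,\sigma(b))$ fixes the canonical parameter of the set $\{b':\varphi(b,b')\}$ intersected with the realizations of $\tp(b/A)$, and its fixed field is $\dcl$ of that parameter. Finally, $C$ is normal if and only if $H_C$ is normal in $\aut(B/A)$, by the standard computation $\sigma H_C\sigma^{-1}=H_{\sigma(C)}$ combined with Fact \ref{fact: mcl}(4). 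Type-definability of $H_C$ over $B$ is witnessed by $\Sigma(\bar b,\sigma(\bar b))=\{x_c=c: c\in C\}$.
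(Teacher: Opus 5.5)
\textbf{Where the argument fails.} The gap is in your second composite, and it cannot be closed in the set-up you chose. With the topology of pointwise convergence the correspondence is false; this is also what a literal reading of ``relatively type definable subset of $\aut(B/A)$'' gives.

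If $B=\dcl(A,b)$ is finitely generated, then $\{\sigma:\sigma(b)=b\}$ is trivial. So $\aut(B/A)$ is discrete and every subgroup is closed. Concretely, in $\DCF_0$ take $A=K=\mathbb{Q}^{\alg}(t)$ with $\delta=d/dt$, $M=K^{\diff}$, $u=\log t\in M$ and $B=K(u)$. This is a $\dcl$-closed normal Picard--Vessiot extension inside $\mcl(K)$, and $\aut(B/K)\cong(\mathbb{Q}^{\alg},+)$ via $u\mapsto u+c$. Let $H=\{u\mapsto u+n : n\in\mathbb{Z}\}$.
\begin{itemize}
\item A rational function $r(t,u)$ with $r(t,u+1)=r(t,u)$ does not involve $u$, so $B^H=K$ and $H_{B^H}=\aut(B/K)\neq H$.
\item The automorphism $\tau\colon u\mapsto u+1/2$ fixes $B^H$, and $\tau(u)$ has the same type as $u$ over $B^H$. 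Yet no $h\in H$ sends $u$ to $\tau(u)$.
\item Your sketched justification also fails here. $u$ is atomically algebraic over $K$, but $\tp(u/K)$ has infinitely many realizations $u+c$ ($c\in C_K$) in $M$. The orbit $Hu=u+\mathbb{Z}$ is not relatively definable, so there is no code $e$ to use.
\end{itemize}
Note that $H$ is even cut out inside $\aut(B/K)$ by the formulas $y\neq x+a$, $a\in\mathbb{Q}^{\alg}\setminus\mathbb{Z}$. The same formulas do not define a subgroup of $\aut(\bar{M}/K)$: for a new constant $c$, compose $u\mapsto u+c$ with an automorphism fixing $c$ and sending $u\mapsto u-c+1/2$. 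So the intended notion of closedness must be imposed at the level of $\aut(\bar{M}/A)$. In the Picard--Vessiot case it must single out exactly the Zariski closed subgroups of the Galois group.

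\textbf{What is missing.} The missing idea is the real content of the second composite: every closed subgroup, in the correct sense, is an intersection of relatively definable subgroups. This is where type-definability in $\aut(\bar{M}/A)$ and stability have to be used, and nothing in your proposal does that.

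Granting it, your own argument for the relatively definable case finishes the proof. Suppose $H=\{\sigma:\models\varphi(b,\sigma(b))\}$ is a subgroup and $\theta$ isolates $\tp(b/A)$. Normality of $B$ and strong $\omega$-homogeneity give $\varphi(b,M)\cap\theta(M)=Hb$. So $H$ is precisely the setwise stabilizer of this $\dcl(A,b)$-definable set, i.e.\ $H=H_{\dcl(A,e)}$ for its code $e\in B$. Hence $\bigcap_i H_{\dcl(A,e_i)}=H_C$ with $C=\dcl(A,(e_i)_i)$. Your first composite $B^{H_C}=C$, which is correct, then gives $B^H=C$ and $H_{B^H}=H$.

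Your handling of normality via $\sigma H_C\sigma^{-1}=H_{\sigma(C)}$ is fine once injectivity of $C\mapsto H_C$ is available.
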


\section{Definable Galois cohomology and $G$-primitives}\label{sec: Gal coho}

The only novelty in this section is the colimit formula in definable Galois cohomology.

Let $M$ be a copy of the prime model over $A$ and $B$ a normal intermediate extension of parameters. Let $G$ be an $A$-definable group.  

Recall from \cite{pillay1997}, \cite{MERETZKY_SES} that a map $\varphi:\aut(B/A) \to G(B)$ is called an $A$-definable cocycle if it satisfies the usual cocycle condition, $\forall \sigma,\tau \in \aut(B/A)$ $$\varphi(\sigma\tau) =\varphi(\sigma)\sigma(\varphi(\tau))$$ and there exists a tuple $b \in B$ and $A$-definable function $h(x,y)$ such that $\forall\sigma \in \aut(B/A)$ $$h(b,\sigma(b)) = \varphi(\sigma).$$ Two cocycles $\varphi$ and $\psi$ are cohomologous if there exists an element $g \in G(B)$ such that $\forall \sigma \in \aut(B/A)$ $\varphi(\sigma) = g^{-1}\psi(\sigma)\sigma(g)$. 

\begin{definition}\cite{MERETZKY_SES}
    The first definable Galois cohomology set $\HH^1_{\defin}(B/A,G(B))$ is defined to be the set of cohomology classes of definable cocycles $\varphi:\aut(B/A) \to G(B)$.  When $B=M$ we write $\HH^1_{\defin}(A,G)$ simply.
\end{definition}

An $A$-definable right principal homogeneous space (PHS) or torsor for $G$ is an $A$-definable set $X$ with a strictly transitive $A$-definable action by $G$. Recall from \cite{MERETZKY_SES} that $\HH^1_{\defin}(B/A,G(B))$ is isomorphic to (i.e. in basepoint preserving bijection with) the isomorphism classes of definable PHSs for $G$ which have a $B$-point.

The main available tools for working with these cohomology sets are the following two theorems, Theorem \ref{thm: SES} and \ref{thm: LES} below:

\begin{theorem}\label{thm: SES}\cite{MERETZKY_SES}
    Let $M$ be a copy of the prime model over $A$ and $A \subseteq B \subseteq C\subseteq M$ be intermediate $\dcl$-closed sets of parameters with $B$ and $C$ normal in $M$ over $A$. Let $G$ be an $A$-definable group. Then the short exact sequence of automorphism groups $$1 \to \aut(C/B) \to \aut(C/A) \to \aut(B/A) \to 1$$ induces a short exact sequence in definable Galois cohomology\footnote{Note that we ignore the transform action of $\aut(B/A)$ on the final term. See Definition 4.1 of \cite{MERETZKY_SES}} $$1 \to \HH^1_{\defin}(B/A,  G(B)) \to \HH^1_{\defin}(C/A, G(C)) \to \HH^1_{\defin}(C/B, G(C)).$$
\end{theorem}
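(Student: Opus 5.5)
The plan is to define the two maps explicitly and then check exactness at each of the two spots. The \emph{inflation} map sends a definable cocycle $\varphi:\aut(B/A)\to G(B)$ to $\varphi\circ\pi$, where $\pi:\aut(C/A)\to\aut(B/A)$ is restriction (surjective because $B$ is normal and $M$ is strongly $\omega$-homogeneous over $A$). If $h(b,\sigma(b))=\varphi(\sigma)$ with $b\in B$, then the same $h$ and $b$ witness definability of $\varphi\circ\pi$, since $\sigma|_B(b)=\sigma(b)$. Cohomologous cocycles go to cohomologous cocycles, so inflation is well defined on classes. The \emph{restriction} map sends a cocycle on $\aut(C/A)$ to its restriction to $\aut(C/B)$; the same $A$-definable (hence $B$-definable) witness shows it is a definable cocycle over $B$. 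The composite is trivial: for $\tau\in\aut(C/B)$ we have $\pi(\tau)=1$, and the cocycle identity gives $\varphi(1)=1$.

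For injectivity of inflation, suppose $\varphi\circ\pi$ and $\psi\circ\pi$ are cohomologous via $g\in G(C)$. Evaluating the relation at $\tau\in\aut(C/B)$ gives $1=g^{-1}\tau(g)$, so $g$ is fixed by $\aut(C/B)$. Every element of $\aut(M/B)$ restricts to an element of $\aut(C/B)$ because $C$ is normal. Hence $g$ is fixed by $\aut(M/B)$, and the Galois definability fact (part 2, using that $M$ stays prime over the normal set $B$) gives $g\in\dcl(B)=B$. Therefore $\varphi$ and $\psi$ are already cohomologous over $B$.

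For exactness in the middle, I will use the identification of $\HH^1_{\defin}(C/A,G(C))$ with isomorphism classes of $A$-definable PHSs having a $C$-point, instead of manipulating cocycles directly. Let $P$ be such a PHS with point $p\in P(C)$, and let the associated cocycle be $\varphi(\sigma)$ with $\sigma(p)=p\cdot\varphi(\sigma)$ (up to the convention on sides). Suppose its restriction to $\aut(C/B)$ is trivial in $\HH^1_{\defin}(C/B,G(C))$, i.e.\ $\varphi(\tau)=g^{-1}\tau(g)$ for some $g\in G(C)$ and all $\tau\in\aut(C/B)$. Set $p'=p\cdot g^{-1}\in P(C)$. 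A direct computation then gives $\tau(p')=p'$ for all $\tau\in\aut(C/B)$. By the same normality-plus-Galois-definability argument as above, $p'\in\dcl(B)=B$. So $P$ has a $B$-point, and its class lies in the image of $\HH^1_{\defin}(B/A,G(B))$, compatibly with inflation.

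The main obstacle is precisely this middle step if one insists on working with cocycles. After modifying $\varphi$ to $\varphi'(\sigma)=g\varphi(\sigma)\sigma(g)^{-1}$, one checks that $\varphi'$ factors through $\aut(B/A)$ and takes values in $G(B)$. However, its definability witness is a tuple from $C$, not from $B$, and one would have to show a witness can be chosen in $B$. Passing through the PHS dictionary avoids this, because the new witness is simply the $B$-point $p'$. I would therefore rely on that dictionary, checking that it is natural with respect to inflation and restriction.
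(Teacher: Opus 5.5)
The paper does not prove this theorem; it is quoted from \cite{MERETZKY_SES}, so there is no in-text argument to compare yours against. Taken on its own, your inflation--restriction argument is correct.

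\textbf{The formal parts.} Well-definedness of both maps, triviality of the composite, and injectivity of inflation are formal. They need only three inputs. The first is surjectivity of $\aut(C/A)\to\aut(B/A)$, which the statement takes as given. The second is that elements of $\aut(M/B)$ restrict into $\aut(C/B)$ because $C$ is normal. The third is $M^{\aut(M/B)}=\dcl(B)=B$, because $M$ remains prime over the normal set $B$.

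\textbf{The middle step.} Your computation $\tau(pg^{-1})=p\varphi(\tau)\tau(g)^{-1}=pg^{-1}$ is right. You have also identified the only non-formal point. The modified cocycle $g\varphi(\sigma)\sigma(g)^{-1}$ factors through $\aut(B/A)$ and is $G(B)$-valued, but it still needs a definability witness from $B$. Moving the base point to $p'=pg^{-1}\in B$ supplies this witness. This is exactly where elimination of imaginaries enters, hidden inside the PHS dictionary.

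\textbf{A direct alternative.} This paper also only recalls the dictionary from the same source, so you may prefer to do the descent directly on cocycles. This is just the torsor construction unwound.
\begin{enumerate}
\item Write $\varphi(\sigma)=h(c,\sigma(c))$ with $c$ from $C$. Put $e=(c,g)$ and $k((x,u),(y,v))=u\,h(x,y)\,v^{-1}$. Then $g\varphi(\sigma)\sigma(g)^{-1}=k(e,\sigma(e))$.
\item On the realizations in $M$ of the formula isolating $\tp(e/A)$, the cocycle identity gives $k(x,x)=1$ and $k(x,z)=k(x,y)\,k(y,z)$. By elementarity, $k(x,y)=1$ is then an $A$-definable equivalence relation on that formula.
\item A code $p'\in\dcl(A,e)$ for the class of $e$ is fixed by $\aut(C/B)$, since $\tau$ fixes the class exactly when $k(e,\tau(e))=1$. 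Hence $p'$ lies in $B$.
\item By the displayed identity, $k$ descends to an $A$-definable $\bar k$ on classes, with $g\varphi(\sigma)\sigma(g)^{-1}=\bar k(p',\sigma(p'))$.
\end{enumerate}

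Either route works. The dictionary route has one advantage: compatibility with inflation reduces to the remark that a torsor with a $B$-point has a $C$-point. As you indicate, you should still check that compatibility at the level of cocycles.
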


We obtain from this theorem the following general colimit formula for definable Galois cohomology. We will use this lemma in the final section for specific normal extensions $B$ and directed systems.

\begin{lemma}\label{lem: colim}
    Let $B$ be normal in $M$ over $A$. Let $\{B_i\}_{i \in I}$ be a family of normal subsets of $B$ which form a directed system under inclusion and such that $\cup_{i \in I}B_i = B$. Then $\varprojlim \aut(B_i/A) \cong \aut(B/A)$ and moreover if $G$ is an $A$-definable group then $$\varinjlim \HH^1_{\defin}(B_i/A,G(B_i)) \cong \HH^1_{\defin}(B/A,G(B)).$$
\end{lemma}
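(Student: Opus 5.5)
We prove the two assertions of Lemma \ref{lem: colim} separately, starting with the profinite-type statement about automorphism groups.

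\emph{Automorphism groups.} For $B_i \subseteq B_j$, both normal over $A$, restriction gives a map $\aut(B_j/A) \to \aut(B_i/A)$. Normality of $B_i$ ensures that any $A$-automorphism of $B_j$ maps $B_i$ onto itself. The maps are compatible, so they induce a homomorphism
\[
\rho:\aut(B/A) \to \varprojlim \aut(B_i/A).
\]
Injectivity is immediate from $B = \bigcup_i B_i$. For surjectivity, take a compatible family $(\sigma_i)_i$ and define $\sigma$ on $B$ by $\sigma(b) = \sigma_i(b)$ for any $i$ with $b \in B_i$. Directedness and compatibility make $\sigma$ well defined and bijective. Since each $\sigma_i$ preserves types over $A$ of finite tuples, and every finite tuple of $B$ lies in a single $B_i$ by directedness, $\sigma$ is elementary over $A$. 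Hence $\sigma \in \aut(B/A)$ and $\rho$ is an isomorphism.

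\emph{Transition maps in cohomology.} By Theorem \ref{thm: SES}, applied to $A \subseteq B_i \subseteq B_j$ and to $A \subseteq B_i \subseteq B$, the inflation maps
\[
\HH^1_{\defin}(B_i/A,G(B_i)) \to \HH^1_{\defin}(B_j/A,G(B_j))
\quad\text{and}\quad
\HH^1_{\defin}(B_i/A,G(B_i)) \to \HH^1_{\defin}(B/A,G(B))
\]
are injective. Concretely, inflation sends a cocycle $\varphi$ to $\sigma \mapsto \varphi(\sigma|_{B_i})$. It preserves definability, because $\varphi(\sigma) = h(b,\sigma(b))$ with $b \in B_i$, and the same formula still works after composing with restriction. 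These maps are compatible, so they induce a map
\[
\Phi:\varinjlim \HH^1_{\defin}(B_i/A,G(B_i)) \to \HH^1_{\defin}(B/A,G(B)).
\]
$\Phi$ is injective because every map into the target is injective. Concretely, two classes from $B_i$ and $B_j$ that agree in $B$ can be inflated to a common $B_k$ with $k \geq i,j$; they are then equal in $B_k$, since $B_k \to B$ is injective.

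\emph{Surjectivity, the main step.} Let $\varphi:\aut(B/A) \to G(B)$ be a definable cocycle witnessed by a finite tuple $b \in B$ and an $A$-definable $h$, so $\varphi(\sigma) = h(b,\sigma(b))$. By directedness, $b \in B_i$ for some $i$. Because $B_i$ is normal, $\sigma(b) \in B_i$, so $\varphi(\sigma) = h(b,\sigma(b)) \in \dcl(A,B_i) = B_i$, assuming the $B_i$ are $\dcl$-closed. If they are not, replace them by their $\dcl$-closures, which are still normal. Moreover $\varphi(\sigma)$ depends only on $\sigma|_{B_i}$.

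Define $\psi(\tau) = h(b,\tau(b))$ for $\tau \in \aut(B_i/A)$. By the strong $\omega$-homogeneity short exact sequence from Section \ref{sec: min}, every $\tau$ lifts to some $\sigma \in \aut(B/A)$, so $\psi(\tau) = \varphi(\sigma)$. The cocycle identity for $\psi$ then follows from that of $\varphi$ applied to lifts, using that $\sigma(\varphi(\sigma'))$ depends only on $\sigma|_{B_i}$ because $\varphi(\sigma') \in B_i$. Thus $\psi$ is a definable cocycle whose inflation is $\varphi$, and $\Phi$ is surjective.

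The main obstacle is this surjectivity step. It hinges on the finite-tuple nature of definable cocycles together with normality. The only subtlety is the $\dcl$-closedness needed to place $\varphi(\sigma)$ in $B_i$. As an alternative route, one can read surjectivity off Theorem \ref{thm: SES}: $\varphi$ restricts trivially to $\aut(B/B_i)$, since $\aut(B/B_i)$ fixes $b$ and hence $\varphi(\sigma) = h(b,b) = \varphi(\mathrm{id}) = 1$ there, so exactness at the middle term places the class of $\varphi$ in the image of $\HH^1_{\defin}(B_i/A,G(B_i))$.
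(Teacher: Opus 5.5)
Your proposal is correct and follows essentially the same route as the paper: you glue compatible automorphisms to get $\aut(B/A)\cong\varprojlim\aut(B_i/A)$, derive injectivity of the colimit map from the injectivity of inflation in Theorem \ref{thm: SES}, and derive surjectivity from the fact that the defining tuple of any definable cocycle lies in some $B_i$. You supply details the paper leaves implicit: every finite tuple lies in a single $B_i$, which gives elementarity, and you explicitly descend $\varphi$ to a cocycle $\psi$ on $\aut(B_i/A)$ via lifts. The only quibble is your aside about replacing the $B_i$ by their $\dcl$-closures, since that would change the directed system; it is unnecessary because the lemma should be read with $\dcl$-closed $B_i$, as the appeal to Theorem \ref{thm: SES} already requires.
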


\begin{proof}
    Note that $\varinjlim B_i$ is the disjoint union of the $B_i$ modulo the relation of eventual equality. Each $B_i$ includes into $B$ which induces a map from $\varinjlim B_i$ to $B$. It follows immediately from the definitions that this is well defined and injective. Surjectivity follows from the assumption that $\cup_{i \in I}B_i = B$. So there is a canonical bijection from $\varinjlim B_i$ to $B$.

    As $\{B_i\}_{i \in I}$ forms a directed system, $\{\aut(B_i/A)\}_{i \in I}$ forms an inverse system. For each index $i$, the quotient homomorphism $Aut(B/A) \to \aut(B_i/A)$ is surjective and the required compatibilities hold so that we have a map $Aut(B/A) \to \varprojlim\aut(B_i/A)$. Injectivity follows by noting that if two automorphism of $Aut(B/A)$ differ, then they differ on some $B_i$. To check surjectivity, let $(\sigma_i)_{i\in I} \in \varprojlim\aut(B_i/A)$. We define an automorphism of $B$ whose image is $(\sigma_i)_{i\in I}$. For $b \in B$, there is an $i \in I$ such that $b\in B_i$, so define $\sigma(b) = \sigma_i(b)$. To see that this is well defined independent of the choice of $B_i$, let $i,j \in I$ such that $b \in B_i \cap B_j$. Then there is some $B_k$ containing both $B_i$ and $B_j$. Then $\sigma_i(b) = \pi_{ki}(\sigma_k)(b) = \sigma_k(b) = \pi_{kj}(\sigma_k)(b) = \sigma_j(b)$ where $\pi_{ki}:\aut(B_k/A) \to \aut(B_i/A)$ and $\pi_{kj}:\aut(B_k/A) \to \aut(B_j/A)$ are the projections. As $\sigma$ restricts to an elementary permutation on each $B_i$, and $B = \cup_{i \in I}B_i$, $\sigma$ is an elementary permutation. So $\aut(B/A) \cong \varprojlim \aut(B_i/A)$.

    By Theorem \ref{thm: SES}, as the $\{\aut(B_i/A)\}_{i \in I}$ form an inverse system of groups, $\{\HH^{1}_{\defin}(B_i/A, G(B_i))\}_{i \in I}$ forms a directed system of pointed sets. The inclusion of each $B_i$ in $B$ induces again via Theorem \ref{thm: SES} a map from $\varinjlim \HH^1_{\defin}(B_i/A,G(B_i))$ to $\HH^{1}_{\defin}(B/A,G(B))$. As $\cup_{i\in I}B_i = B$, this map is surjective as the definability data for any cocycle in $\HH^{1}_{\defin}(B/A,G(B))$ must be contained in from some $B_i$. Injectivity is immediate: If for some indices  $i,j \in I$ there are class in $\HH^1_{\defin}(B_i/A,G(B_i))$ and $\HH^1_{\defin}(B_i/A,G(B_j))$ which have the same image in $\HH^1_{\defin}(B/A,G(B))$, then they must be equal in $\HH^1_{\defin}(B_k/A,G(B_k))$ where $B_i,B_j \subseteq B_k$. So $\varinjlim \HH^1_{\defin}(B_i/A,G(B_i)) \cong \HH^{1}_{\defin}(B/A,G(B))$.
    
\end{proof}

The following theorem of \cite{OmarAnandDavid} is the other fundamental method for computing with Galois cohomology in the definable setting.

\begin{theorem}\label{thm: LES}\cite{OmarAnandDavid}
     Let $H$ be an $A$-definable subgroup of a definable group $G$. We then have an exact sequence of pointed sets $$1 \to H(A) \to G(A) \to (G/H)(A) \xrightarrow{\delta} \HH^1_{\defin}(A,H)\to \HH^1_{\defin}(A,G)$$ where $(G/H)(A)$ is the pointed set of $A$-definable left cosets of $H$ in $G$.
\end{theorem}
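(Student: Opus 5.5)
The plan is to prove the long exact sequence of Theorem \ref{thm: LES} by hand, working with definable cocycles $\varphi:\aut(M/A)\to G(M)$ and using that $M$ is prime over $A$. The Galois definability fact then says that fixed points of $\aut(M/A)$ are exactly $\dcl(A)$, and that $\aut(M/A)$-invariant $M$-definable sets are $A$-definable.

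First I would check exactness at the first three terms. The identification $H(A)=H(M)^{\aut(M/A)}$ follows from $M^{\aut(M/A)}=\dcl(A)=A$ together with elimination of imaginaries. Injectivity of $H(A)\to G(A)$ is trivial. For exactness at $G(A)$: if $g\in G(A)$, then the coset $gH$ is $A$-definable, and it equals $H$ exactly when $g\in H(A)$. Here $(G/H)(A)$ is read as the $A$-definable cosets $gH$ with $g\in G(M)$.

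Next comes the connecting map $\delta$. Given an $A$-definable coset $gH$ with $g\in G(M)$, invariance gives $\sigma(g)H=gH$ for every $\sigma\in\aut(M/A)$. I would define
$$\delta(gH)=[\sigma\mapsto g^{-1}\sigma(g)].$$
This takes values in $H(M)$ and satisfies the cocycle identity $g^{-1}\sigma\tau(g)=g^{-1}\sigma(g)\,\sigma(g^{-1}\tau(g))$. It is definable, since it is given by the $A$-definable function $h(x,y)=x^{-1}y$ evaluated at $(g,\sigma(g))$. A different representative $gk$ with $k\in H(M)$ changes the cocycle to $k^{-1}(g^{-1}\sigma(g))\sigma(k)$, which is cohomologous, so $\delta$ is well defined.

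Exactness at $(G/H)(A)$ goes as follows. The class $\delta(gH)$ is trivial iff $g^{-1}\sigma(g)=k^{-1}\sigma(k)$ for some $k\in H(M)$ and all $\sigma$. That says $gk^{-1}$ is $\aut(M/A)$-fixed, hence lies in $G(A)$, so $gH=g'H$ with $g'\in G(A)$. Exactness at $\HH^1_{\defin}(A,H)$ is similar. The composite to $\HH^1_{\defin}(A,G)$ is trivial because $g^{-1}\sigma(g)$ is visibly a $G$-coboundary. Conversely, if an $H$-cocycle satisfies $\varphi(\sigma)=g^{-1}\sigma(g)$ with $g\in G(M)$, then $\sigma(g)\in gH$ for all $\sigma$. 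So $gH$ is $\aut(M/A)$-invariant, hence $A$-definable by the Galois definability fact, and $\delta(gH)=[\varphi]$.

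The main obstacle is the definability bookkeeping: arranging that the $G$-coboundary witness $g$ lies in $M$, and that invariance of the set $gH$ really yields $A$-definability. This is exactly where primeness of $M$ over $A$ (strong homogeneity and the Galois definability fact) is used, and I would check this step carefully.
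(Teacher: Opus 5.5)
Your argument is correct. The paper has no proof of Theorem~\ref{thm: LES} to compare it with line by line: it imports the result from \cite{OmarAnandDavid} and only uses it in torsor form. In that form, $\delta$ sends an $A$-definable coset $gH$ (the fibre over a point of $(G/H)(A)$) to its class as a right $A$-definable PHS for $H$. Exactness at $\HH^1_{\defin}(A,H)$ is then read as ``an $H$-PHS becomes trivial in $\HH^1_{\defin}(A,G)$ iff it is $A$-definably isomorphic to an $A$-definable left coset''; this is how it enters Theorem~\ref{thm: cohomological fact} and Section~\ref{sec: mult}.

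Your Serre-style cocycle computation agrees with that description under the cocycle/PHS dictionary of \cite{MERETZKY_SES}: choosing the base point $g\in gH(M)$ gives exactly your cocycle $\sigma\mapsto g^{-1}\sigma(g)$. What your route buys is that it isolates the only model-theoretic inputs, both consequences of $M$ being prime over $A$. The first is $M^{\aut(M/A)}=\dcl(A)$, used for exactness at $(G/H)(A)$. The second is that an $M$-definable set whose $M$-points are $\aut(M/A)$-invariant is $A$-definable, used for exactness at $\HH^1_{\defin}(A,H)$.

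The step you flag as the main obstacle is in fact harmless. In $\HH^1_{\defin}(M/A,\cdot)$ the witness of being cohomologous is by definition an element of $G(M)$. Every $A$-definable coset has a representative in $G(M)$ simply because $A\subseteq M\preceq\bar M$. The invariance-to-definability step is item (1) of the Galois definability fact in Section~\ref{sec: min} with $B=A$. You can also check it directly: suppose $gH$ is defined by $\psi(x,g)$ and $\theta(y)$ isolates $\tp(g/A)$. Strong $\omega$-homogeneity of $M$ over $A$, together with the invariance of $gH(M)$, shows that $\forall x\,\bigl(\psi(x,g)\leftrightarrow\exists y\,(\theta(y)\wedge\psi(x,y))\bigr)$ holds in $M$, hence in $\bar M$.
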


The connecting homomorphism $\delta$ of the above theorem identifies the $A$-definable left cosets of $H$ in $G$ among all right $A$-definable PHSs for $H$. The primitive elements in case (1) of the following definition are exactly basepoints of $A$-definable PHSs for $H$ which are in the image of $\delta$. The primitive elements of case $(2)$ (for nontrivial $W$) are basepoints for $A$-definable PHSs for $H$ which lie outside of the image of $\delta$.

\begin{definition}\label{def: prim elt}
     Let $H$ be an $A$-definable subgroup of a definable group $G$. 
     \begin{enumerate}
         \item \cite{pillaysokolovic} Let $\pi:G\to G/H$ be the $A$-definable quotient map given by elimination of imaginaries. An element $a \in G$ is said to be a $(G,H)$-primitive over $A$ if $\pi(a) \in (G/H)(A)$. 
         \item \cite{MeretzkyThesis} Let $W$ be an $A$-definable right torsor for $G$ and $\pi:W \to W/H$ the $A$-definable quotient map given by elimination of imaginaries. An element $a \in W$ is said to be a $(W,G,H)$-primitive over $A$ if $\pi(a) \in (W/H)(A)$.
     \end{enumerate}
\end{definition}

\section{Internality}\label{sec: internality}

The material in this section is a standard part of geometric stability theory but we include it to be able to carefully state and prove the results of Section \ref{sec: mult} as conventions vary by author.

Let $T$ be a totally transcendental theory eliminating imaginaries. Let $\bar{M}$ be a monster model of $T$. Assume the language has at least two constants. Let $A$ be a small $\dcl$-closed set of parameters in $\bar{M}$. Let $M$ be the prime model of $T$ over $A$.

%Let $Y$ be an $A$-definable set in $\bar{M}$. If $Y$ is internal to an $A$-definable set $X$ then there are a few ways of viewing the group $Aut(Y/X,A)$ as a definable group. By $Aut(Y/X,A)$ we mean the elementary permutations of $Y$ which fix $X$ and $A$ pointwise. That is, permutations $\sigma$ of $Y$ such that for any $\aaa = (a_1,...,a_n) \in Y^n$, and $\varphi(\xx) \in L(X,A)$, $\bar{M} \models \varphi(\aaa)$ if and only if $\bar{M} \models \varphi(\sigma(\aaa))$.

\begin{definition}\label{internality}
Let $Y$ and $X$ be a $A$-definable sets. Let $\psi_Y(\bar{z})$ and $\varphi_X(\bar{w})$ be $A$-definable formulas defining $Y$ and $X$ respectively.  We say that $Y$ is internal to $X$ if there is a small subset of parameters $B \subset \bar{M}$ such that $Y \subseteq \dcl(B,X,A)$. 
\end{definition}

We set up notation:

\begin{remark}
     $Y$ is internal to $X$ means that, for each $\bar{a} \in Y$, there is an $A$-formula, $\varphi_{\bar{a}}(\bar{y}_{\bar{a}},\bar{x}_{\bar{a}},\bar{z})$, a tuple $\bar{b}_{\bar{a}} \in B^{|\bar{y}_{\bar{a}}|}$ (the variable lengths can change depending on the $\bar{a}$) and a tuple $\bar{c}_{\bar{a}} \in X^{|\bar{x}_a|}$ such that $\models \varphi_{\bar{a}}(\bar{b}_{\bar{a}},\bar{c}_{\bar{a}},\bar{a})$ and $\bar{a}$ is the unique element of $\bar{M}$ satisfying the above formula.
\end{remark}

\begin{lemma}\label{Y definability lemma}
Let $Y$ be $X$-internal as above. Then there exists an $A$-definable function $f(\bar{y},\bar{x})$ and a tuple of elements $\bar{b}$ from $Y$ such that for each $\bar{a} \in Y$, there exists a tuple $\bar{c}$ from $dcl(X,A)$ such that $f(\bar{b},\bar{c}) = \bar{a}$. 
\end{lemma}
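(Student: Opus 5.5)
We would prove this by the standard compactness argument for internality: replace the small, arbitrary parameter set $B$ by a finite tuple of parameters taken from $Y$ itself, and then collapse the finitely many resulting formulas into a single $A$-definable function.

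\emph{Step 1: many formulas, fixed parameters.} By the preceding Remark, every $\bar a \in Y$ satisfies an $A$-formula $\varphi_{\bar a}(\bar b_{\bar a},\bar c_{\bar a},\bar z)$ that isolates it, with $\bar b_{\bar a}$ from $B$ and $\bar c_{\bar a}$ from $X$. Consider the partial type in $\bar z$
$$\{\psi_Y(\bar z)\}\cup\{\neg\exists \bar x\,(\bar x \in X \wedge \exists^{=1}\bar z'\,\varphi(\bar b,\bar x,\bar z') \wedge \varphi(\bar b,\bar x,\bar z)) : \varphi \text{ an } A\text{-formula}, \ \bar b \text{ from } B\}.$$
By internality it is not realized in $\bar M$. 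Since $B$ is small and $\bar M$ is saturated, compactness yields finitely many $A$-formulas $\varphi_1,\dots,\varphi_k$ and one finite tuple $\bar b^*$ from $B$ such that every $\bar a\in Y$ is the unique solution of some $\varphi_i(\bar b^*,\bar c,\bar z)$ with $\bar c$ from $X$.

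\emph{Step 2: merge into one function.} Use the two constants $0 \neq 1$ of the language to encode the index $i$ inside the $\bar c$ argument, for example as a tuple of $0$'s and $1$'s. Then define
$$g(\bar y,\bar x,\bar e)= \text{the unique } \bar z \text{ with } \varphi_i(\bar y,\bar x,\bar z) \text{ when } \bar e \text{ codes } i,$$
and some default value otherwise. This $g$ is $A$-definable, and the code $\bar e$ lies in $\dcl(A)\subseteq \dcl(X,A)$.

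\emph{Step 3: move the parameters into $Y$.} This is the main obstacle, since the parameters $\bar b^*$ lie in $B$ while the statement asks for parameters from $Y$. The standard fix is as follows.
\begin{itemize}
\item[(i)] The $X$-coordinates may be assumed to range over a definable family. Hence $\operatorname{tp}(\bar b^*/A)$ contains a formula $\theta(\bar y)$ saying ``every element of $Y$ is of the form $g(\bar y,\bar c,\bar e)$ with $\bar c$ from $X$''.
\item[(ii)] Take a long Morley sequence in $\operatorname{tp}(\bar b^*/A)$, or use that $Y \subseteq \dcl(\bar b^*,X,A)$. Since $T$ is totally transcendental (stable), $Y$ is then internal in the strong sense: for a finite tuple $\bar d$ from $Y$ with $\bar b^*$ independent from $\bar d$ over $A$ appropriately, $\operatorname{tp}(\bar b^*/A,\bar d, X)$ is definable.
\item[(iii)] The cleaner route I would try first is the classical argument (Poizat/Hrushovski). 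Take $\bar d=(\bar a_1,\dots,\bar a_m)$ from $Y$ such that $\operatorname{tp}(\bar b^*/A,Y)$ is determined by $\operatorname{tp}(\bar b^*/A,\bar d)$ together with $X$. For any $\bar a\in Y$ we have $\bar a=g(\bar b^*,\bar c,\bar e)$. Each $\bar a_j = g(\bar b^*,\bar c_j,\bar e_j)$, so $\bar b^*$ enters only through its relation to $\bar d$ and $\bar c_j$. Using stability (definability of $\operatorname{tp}(\bar d/\bar b^* A X)$), show that $\bar a\in\dcl(A,\bar d,\bar c',X)$ for suitable $\bar c'$ from $\dcl(X,A)$. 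Then apply compactness again, uniformly, as in Steps 1 and 2.
\end{itemize}
Concretely, I would realize a copy $\bar b'$ of $\operatorname{tp}(\bar b^*/A)$ independent from $\bar b^*$. Then $\bar b' \in \dcl(\bar b^*, X, A)$ fails in general, but $\bar b^*$'s action on $Y$ is coded by finitely many elements of $Y$: the map $\bar c\mapsto g(\bar b^*,\bar c,\bar e)$ is determined by its values on a finite generating tuple $\bar d\subseteq Y$ via the definable family. Verifying this coding, namely that $\bar b^*$ can be replaced by $\bar d$ modulo $\dcl(X,A)$, is where the t.t./stability hypothesis is really used, and I expect it to require the most care.

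The conclusion then follows with $f := g$ reparametrized, $\bar b:=\bar d$, and $\bar c$ including both the $X$-witnesses and the index code, all in $\dcl(X,A)$.
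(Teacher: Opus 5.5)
Your Steps 1 and 2 are correct and match the first half of the paper's proof. Compactness gives finitely many formulas with a single finite tuple $\bar b^*$ from $B$, and the index is coded using constants (the paper uses distinct elements of $\dcl(X,A)$). Building $\exists^{=1}$ into the formulas is in fact slightly more careful than the paper. The gap is Step 3, which is the only substantive part of the lemma, and which you do not carry out. Items (i)--(iii) and your final paragraph list strategies rather than give an argument:
the claim that the map $\bar c\mapsto g(\bar b^*,\bar c,\bar e)$ is ``determined by its values on a finite tuple $\bar d\subseteq Y$'' is asserted without proof;
choosing $\bar d$ in (iii) ``such that $\operatorname{tp}(\bar b^*/A,Y)$ is determined by $\operatorname{tp}(\bar b^*/A,\bar d)$ together with $X$'' is exactly what has to be shown;
the stability fact you invoke there, definability of $\operatorname{tp}(\bar d/\bar b^*AX)$, points the wrong way, since it produces formulas with parameters from $\bar b^*$, which is what you need to eliminate.
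As written, nothing shows that the parameters from $B$ can be traded for parameters from $Y$.

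The missing idea is a single application of definability of types over arbitrary sets, which holds in any stable and in particular any t.t.\ theory. Apply it to the type of $\bar b^*$ over $C=Y\cup\dcl(X,A)$. Let $\varphi(\bar y;\bar x,\bar z)$ be the $A$-formula $\psi_Y(\bar z)\wedge g(\bar y,\bar x)=\bar z$, with the code $\bar e$ absorbed into $\bar x$ and $\bar y$ as the object variable. The $\varphi$-definition of $\operatorname{tp}(\bar b^*/C)$ is a formula $\chi(\bar d,\bar c_0,\bar x,\bar z)$, with $\chi$ over $A$, $\bar d$ a finite tuple of elements of $Y$, and $\bar c_0$ a finite tuple from $\dcl(X,A)$. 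It satisfies $\models\varphi(\bar b^*,\bar c,\bar a)\iff\models\chi(\bar d,\bar c_0,\bar c,\bar a)$ for all $\bar c$ from $\dcl(X,A)$ and all $\bar a\in Y$. Given $\bar a\in Y$ and its witness $\bar c$ from Step 1, $\bar a$ is then the unique element of $Y$ satisfying $\chi(\bar d,\bar c_0,\bar c,\bar z)$. So define $f(\bar d,(\bar c_0,\bar c))$ as ``the unique $\bar z\in Y$ with $\chi(\bar d,\bar c_0,\bar c,\bar z)$, and a default value otherwise''. This is $A$-definable, and after absorbing $\bar c_0$ into the second argument it witnesses the lemma with $\bar b:=\bar d$. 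This is how the paper finishes; Morley sequences, independent copies of $\bar b^*$ and finite-generation coding are not needed.
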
 

\begin{remark}
The tuple $\bar{c}$ in most cases will come from $X$. In the case that $X$ and $A$ are both empty, the assumption that the language has multiple constants guarantees that $dcl(X,A)$ is still at least countably infinite: Note that if there are at least two constants in the language then finite binary sequences may be coded.
\end{remark}

\begin{proof}
Note that $A$ and $B$ are small.  By assumption, the collection of $A\cup B$-formulas $$\{\psi_Y(\bar{z})\} \cup \{\neg \exists \bar{x}_{\bar{a}} \in X^{|\bar{x}_{\bar{a}}|}(\varphi_{\bar{a}}(\bar{b},\bar{x}_{\bar{a}},\bar{z})) : \bar{b} \in B^{\bar{y}_{\bar{a}}}\}_{\bar{a} \in Y}$$ is inconsistent. Therefore it is finitely inconsistent by compactness. Thus there are finitely many $\bar{a}_1,...,\bar{a}_m \in Y$ such that $$\models \forall \bar{z} \left( \psi_Y(\bar{z}) \rightarrow \bigvee_{i=1}^m \ \exists \bar{x}_{\bar{a}_i} \in X^{|\bar{x}_{\bar{a}_i}|} \  \varphi_{\bar{a}_i}(\bar{b}_{\bar{a}_i},\bar{x}_{\bar{a}_i},\bar{z})\right).$$ 

Let $e_1,...,e_m$ be distinct elements from $dcl(X,A)$ (which exist by the preceding remark) associated to variables $u_1,...,u_m$. Then for each $\bar{a} \in Y$, there exists $\bar{c}_{\bar{a}} \in X^{|\bar{x}_{\bar{a}_i}|}$ and $e_j \in dcl(X,A)$ such that $$\models\bigvee_{i=1}^m (\varphi_{\bar{a}_i}(\bar{b}_{\bar{a}_i},\bar{c}_{\bar{a}},\bar{a}) \wedge e_i=e_j)$$ and $\bar{a}$ is the only such solution.

Denote the above formula by $\varphi(\bar{y},\bar{x},\bar{z})$ where
we let $\bar{b} =(b_{\bar{a}_1},...,b_{\bar{a}_m})$ in the variables $\bar{y} = (\bar{y}_{\bar{a}_1},...,\bar{y}_{\bar{a}_m})$ and $\bar{x} = (\bar{x}_{\bar{a}_1},u_1,....,\bar{x}_{\bar{a}_m},u_m, u)$ with $u$ being the variable filled by the element $e_j$. It is clear that $\varphi(\bar{b}, \bar{x},\bar{z})$ defines the graph of a definable function.

Let $\bar{a} \in Y$ now and $\bar{c} \in dcl(X,A)$ such that $\models \varphi(\bar{b},\bar{c},\bar{a})$. Then $\varphi(\bar{y},\bar{c},\bar{a}) \in \tp(\bar{b}/dcl(X,A)\cup Y)$. As $T$ is totally transcendental, by definability of types over arbitrary subsets of $\bar{M}$, $\varphi(\bar{b},\bar{x},\bar{z})$ is definable over $dcl(X,A) \cup Y$. 

We now make some replacements in notation and we are finished. 

The set defined by $\varphi(\bar{b},\bar{x},\bar{z})$, is defined by a $dcl(X,A) \cup Y$-formula which we will now write as $\varphi_0(\bar{b},\bar{c}, \bar{x},\bar{z})$. Where $\bar{b}=(b_1,...,b_n)$ is a new tuple of parameters from $Y$, $\bar{c}$, in say variables $\bar{v}$, is a new tuple of parameters from $\dcl(A,X)$ and $\varphi_0(\bar{y},\bar{v},\bar{x}, \bar{z})$ is $A$-definable. As before, $\varphi_0(\bar{b},\bar{c},\bar{x}, \bar{z})$ is the graph of a definable function. Now both $\bar{v}$ and $\bar{x}$ are sorts on $\dcl(A,X)$ so rewrite them as just $\bar{x}$. Then $\varphi_0(\bar{b},\bar{x},\bar{z})$ is still clearly a the graph of a definable function $f(\bar{b},\bar{x}) = \bar{z}$.

The tuple $\bar{x} = (\bar{x}_{\bar{a}_1},u_1,....,\bar{x}_{\bar{a}_m},u_m,u,\bar{v})$. So the $\bar{x}$ variable ranges over an $A$-definable subset $X_0$ of $\dcl(A,X)$.
\end{proof}

\begin{definition}
We call the tuple $\bar{b} = (b_1,...,b_n)$ a fundamental system of solutions associated to $Y$, $X$.
\end{definition}

\begin{remark}
Tracing through the above proof we can see that if $X$ is the empty set, then $Y$ is algebraic over a finite extension of $A$ from $B$ i.e. $$\models \forall \bar{z}\left(\varphi_Y(\bar{z}) \to \bigvee_{i=1}^m\varphi_{\bar{a}_i}(\bar{b}_{\bar{a}_i},\bar{z}) \right)$$ with the disjunction being an algebraic formula. That is, $Y$ implies an algebraic formula over $\dcl(A,\bar{b}_{\bar{a}_1},...,\bar{b}_{\bar{a}_m})$. Conversely, any algebraic $A$-definable set $Y$ is internal to the $\emptyset$, because it is in the definable closure of $A$ union a full (finite) set of solutions to $Y$ in $\bar{M}$ which is finite and therefore small. That is, for any small $A$-definable set $Y$ (which then must be algebraic), $Y \subseteq \dcl(A,Y)$ witnesses internality.
\end{remark}

\begin{lemma}\label{lem: Z A def}
Let $Y$ be internal to $X$, both defined over a small $\dcl$-closed set $A$. The set of fundamental systems $Z \subset Y^n$ is $A$-definable.   
\end{lemma}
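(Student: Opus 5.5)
We must show that the set $Z$ of fundamental systems is $A$-definable. Here $Z$ consists of the tuples $\bar{b}' \in Y^n$ that play the same role as $\bar{b}$ from Lemma \ref{Y definability lemma}. The natural definition is
\[
Z = \{\bar{b}' \in Y^n : \forall \bar{z}\,(\psi_Y(\bar{z}) \rightarrow \exists \bar{x} \in X_0\ f(\bar{b}',\bar{x}) = \bar{z})\},
\]
where $f$ and $X_0$ are the $A$-definable function and $A$-definable subset of $\dcl(A,X)$ produced in the proof of Lemma \ref{Y definability lemma}.

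More precisely, $f$ is given by an $A$-formula $\varphi_0(\bar{y},\bar{x},\bar{z})$. The first step is to write down the $A$-formula $\theta(\bar{y})$ expressing three conditions:
\begin{itemize}
\item[(i)] $\bar{y} \in Y^n$;
\item[(ii)] for every $\bar{x} \in X_0$ there is at most one $\bar{z}$ with $\varphi_0(\bar{y},\bar{x},\bar{z})$, so that $\varphi_0(\bar{y},\cdot,\cdot)$ is the graph of a function;
\item[(iii)] every $\bar{z}$ satisfying $\psi_Y$ is of the form $\varphi_0(\bar{y},\bar{x},\bar{z})$ for some $\bar{x} \in X_0$.
\end{itemize}
All of these are first-order conditions using only $\psi_Y$, $\varphi_X$ (through $X_0$) and $\varphi_0$, so $\theta$ is over $A$. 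We then set $Z := \theta(\bar{M})$ and, if needed, take this as the official definition of ``fundamental system''. The proof of Lemma \ref{Y definability lemma} shows that $\bar{b} \in Z$, so $Z$ is nonempty.

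The second step is to check that $\theta$ really captures the notion, and not just some $A$-definable set containing $\bar{b}$. By construction, any $\bar{b}' \models \theta$ satisfies the conclusion of Lemma \ref{Y definability lemma}: $f(\bar{b}',\cdot)$ maps $X_0 \subseteq \dcl(A,X)$ onto $Y$. Conversely, any realization of $\tp(\bar{b}/A)$ satisfies $\theta$. Hence $Z$ contains the whole $A$-type of $\bar{b}$ and is invariant under $\aut(\bar{M}/A)$.

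The main obstacle is this fit between the informal notion and the formula. ``Fundamental system'' was defined only relative to the particular $\bar{b}$ and $f$ produced by the lemma, while other tuples could generate $Y$ over $\dcl(A,X)$ via different formulas. If $Z$ is intended to mean ``all tuples of $Y^n$ generating $Y$ over $\dcl(A,X)$ by some formula'', then $Z$ is only a union of $A$-definable sets. To handle this, I would fix the $A$-formula $\varphi_0$ once and for all and define $Z$ relative to it, as in the standard binding-group construction. $A$-definability is then immediate, and $Z$ is $\aut(\bar{M}/A)$-invariant.

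If one insists on $Z$ being exactly $\tp(\bar{b}/A)$ restricted to the prime model, I would instead shrink $\theta$ by conjoining an isolating formula of $\tp(\bar{b}/A)$. Such a formula exists when $\bar{b}$ is chosen in $M$, since $M$ is atomic over $A$. I would prefer the former definition, since it is what the later construction of $f:Z\times X_1\to Z$ uses.
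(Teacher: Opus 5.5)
Your proposal is correct and essentially matches the paper's proof, which defines $Z$ by the $A$-formula $\forall \bar{z} \in Y\ \exists \bar{x} \in X_0\ f(\bar{y},\bar{x}) = \bar{z}$ in the variable $\bar{y}$. Your extra functionality clause (ii), and your point that ``fundamental system'' must be read relative to the fixed formula $\varphi_0$, make explicit what the paper leaves implicit.
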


\begin{proof}
The $A$-definable formula $$\forall \bar{z} \in Y \exists \bar{x} \in X_0 \ f(\bar{y},\bar{x}) = \bar{z}$$ in the variable $\bar{y}$, defines the subset $Z \subseteq Y^n$. Note that this $n$ is the length of any fundamental system i.e., the $n$ in the proof of Lemma \ref{Y definability lemma}.
\end{proof}

\begin{remark}\label{fundsysinM}
As $M$ is an elementary substructure of $\bar{M}$, the previous lemma shows that we can find a fundamental system $\bar{b}$ in $M$, the prime model over $A$.
\end{remark}

\begin{lemma}\label{lem: xyz1f}
    Let $Y$ and $X$ be $A$-definable with $Y$ internal to $X$. Let $Z$ be the associated $A$-definable set of fundamental systems.  Then there is an $A$-definable subset $X_1$ of $dcl(A,X)$ and an $A$-definable function $f(\bar{y},\bar{x}): Z \times X_1 \to Z$ such that for any $\bar{b},\bar{b}' \in Z$ there is a unique $\bar{c} \in X_1$ with $f(\bar{b},\bar{c}) = \bar{b}'$.
\end{lemma}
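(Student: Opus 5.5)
The plan is to build $X_1$ from the function $f$ and set $X_0$ of Lemma \ref{Y definability lemma} by coding, for a pair of fundamental systems, the tuple of "coordinates" of one system relative to the other, and then to cut this down to a canonical representative using elimination of imaginaries.

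First, fix $\bar{b}\in Z$. Every fundamental system $\bar{b}'=(b'_1,\dots,b'_n)$ is a tuple of elements of $Y$, so by Lemma \ref{Y definability lemma} (applied with the fundamental system $\bar{b}$, which is legitimate since membership in $Z$ is exactly the defining property of Lemma \ref{lem: Z A def}) each $b'_i=f(\bar{b},\bar{c}_i)$ for some $\bar{c}_i\in X_0$. Define $F(\bar{y},(\bar{x}_1,\dots,\bar{x}_n)):=(f(\bar{y},\bar{x}_1),\dots,f(\bar{y},\bar{x}_n))$ on $Z\times X_0^n$. This is $A$-definable, and for all $\bar{b},\bar{b}'\in Z$ there is some $\bar{c}\in X_0^n$ with $F(\bar{b},\bar{c})=\bar{b}'$. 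What is still missing is uniqueness of $\bar{c}$ and independence of the target set from $\bar{b}$.

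Second, to get uniqueness, consider the $A$-definable equivalence relation $E$ on $X_0^n$ given by $\bar{c}\,E\,\bar{c}'$ iff $F(\bar{y},\bar{c})=F(\bar{y},\bar{c}')$ for all $\bar{y}\in Z$. I expect the main obstacle here: different $\bar{b}$ might identify different pairs of $\bar{c}$'s, so an equivalence that depends on $\bar{b}$ would not be $A$-definable uniformly. I would try to resolve this by showing that whether $F(\bar{b},\bar{c})=F(\bar{b},\bar{c}')$ does not depend on $\bar{b}\in Z$. The idea is that any $\bar{b}_2\in Z$ is $F(\bar{b},\bar{d})$, hence lies in $\dcl(\bar{b},X,A)$. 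Symmetrically, $\bar{b}\in\dcl(\bar{b}_2,X,A)$, so $\dcl(A,X,\bar{b})=\dcl(A,X,\bar{b}_2)$. Since $\bar{c},\bar{c}'$ come from $\dcl(A,X)$, I hope to show that $\tp(\bar{b}_2/\dcl(A,X))$ and $\tp(\bar{b}/\dcl(A,X))$ agree on the formula $F(\bar{y},\bar{c})=F(\bar{y},\bar{c}')$, for instance by an automorphism argument fixing $\dcl(A,X)$. If this fails in general, the fallback is to restrict $Z$ to a single orbit, i.e. a complete type over $\dcl(A,X)$, which remains definable by definability of types in the t.t. setting. Once the identification is $\bar{b}$-independent, $E$ is $A$-definable.

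Third, by elimination of imaginaries I take the quotient $X_0^n/E$ as an $A$-definable set in $\dcl(A,X)$, since codes of classes of tuples from $\dcl(A,X)$ lie in $\dcl(A,X)$. I then let $X_1$ be the set of classes $[\bar{c}]$ such that $F(\bar{y},\bar{c})\in Z$ for some, equivalently all, $\bar{y}\in Z$. The map $f(\bar{y},[\bar{c}]):=F(\bar{y},\bar{c})$ is well defined and $A$-definable. Existence of $[\bar{c}]$ with $f(\bar{b},[\bar{c}])=\bar{b}'$ comes from the first step, and uniqueness holds by construction of $E$ together with its $\bar{b}$-independence.
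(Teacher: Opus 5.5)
Your first and third steps are exactly the paper's construction: form $F$ on $Z\times X_0^n$, quotient $X_0^n$ by ``$F(\bar y,\hat c)=F(\bar y,\hat c')$ for all $\bar y\in Z$'' via EI, then keep the classes landing in $Z$. You also correctly isolate the delicate point, which the paper's proof passes over. That relation is the intersection over $\bar b\in Z$ of the relations $E_{\bar b}$ (``$F(\bar b,\hat c)=F(\bar b,\hat c')$''). So $f$ is well defined at every $\bar b$, but $f(\bar b,-)$ is injective on classes only if $E_{\bar b}$ equals that intersection.

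Your proposed resolution fails. Since $F(\bar y,\hat c)=F(\bar y,\hat c')$ is a formula over $\dcl(A,X)$, $E_{\bar b}$ depends only on $\tp(\bar b/\dcl(A,X))$. But $\dcl(A,X,\bar b)=\dcl(A,X,\bar b_2)$ does not give $\bar b$ and $\bar b_2$ the same type over $\dcl(A,X)$. In general $Z$ is a union of many such types: these are the sets $Q_d$, $d\in O$, of Section~\ref{sec: binding}.

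Across different types the independence claim is false for an $f$ as in Lemma~\ref{Y definability lemma}. Work in $\mathrm{ACF}_0$ with $A=\dcl(\emptyset)$, $X=Y$ the home sort, $n=1$, $X_0=X$, and put $f(y,x)=x$ for $y\neq 0$ and $f(0,x)=x^2$. With $\bar b=1$ this satisfies Lemma~\ref{Y definability lemma}, and $Z$ is the whole field. Your relation is equality (test at $y=1$), yet $f(0,1)=f(0,-1)$, so uniqueness fails at $\bar b=0$. Here $0$ and $1$ are distinct points of $\dcl(A,X)$, so no automorphism over $\dcl(A,X)$ can connect them. The ``for some, equivalently all'' in your third step has the same defect.

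The fallback does not rescue the statement either. The realizations of a complete type over $\dcl(A,X)$ form a definable set because the type is isolated (Lemma~\ref{lem: isol fund sys}), not by definability of types. Moreover, the isolating formula uses a parameter $d\in\dcl(A,X)$. Restricting to one orbit therefore replaces $Z$ by $Q_d$ and $A$ by $\dcl(A,d)$. What you get is the torsor $(Q_d,H_d)$ of Proposition~\ref{prop: binding groupoid}, not an $A$-definable $X_1$ and $f$ working for all pairs in $Z$.

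The missing ingredient is a re-choice of $f$, not merely a quotient of $X_0^n$, so that the fibres of $\hat c\mapsto F(\bar b,\hat c)$ are the same for every $\bar b\in Z$. In the example $f(y,x)=x$ does this. When $Z$ is a PHS for a group structure on $X_1$, as assumed in Section~\ref{sec: mult}, uniqueness is automatic. In general an argument uniformizing across the different $Q_d$ is needed, and neither your proposal nor the quotient construction supplies it.
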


\begin{proof}
    Let $b \in Y$. For any $\bar{b} \in Z \subseteq Y^n$ there is a $\bar{c} \in X_0$ such that $f(\bar{b},\bar{c}) = b$. 
    
    Then for any $\bar{b},\bar{b}' = (b_1',...,b_n') \in Z$ there is a tuple $(\bar{c}_1,...,\bar{c}_n) \in (X_0)^n$ such that $f(\bar{b},\bar{c}_i) = b_i'$. 
    
    Define an equivalence relation on $(X_0)^n$ by saying $\hat{c} = (\bar{c}_1,...,\bar{c}_n)$ and $\hat{c}' = (\bar{c}_1',...,\bar{c}_n')$ are equivalent if $$\forall \bar{y} \in Z  \ \bigwedge_{i = 1}^n f(\bar{y},\bar{c}_i) = f(\bar{y},\bar{c}_i').$$
    
    The quotient of $(X_0)^n$ by this $A$-definable equivalence relation gives another $A$-definable set $X_1$ living in $\dcl^{\eq}(X,A)$ which by EI is just $\dcl(X,A)$. We then replace $f(\bar{y}, \bar{x})$ in notation with $f(\bar{y}, \bar{x}_1),...,f(\bar{y}, \bar{x}_n)$ with the $\bar{x}_i$ ranging over $X_1$. With $f(\bar{y}, \bar{x}_i)$ being the induced function $Z\times X_1 \to Y$. 

    Now $f(\bar{y}, \bar{x})$ is then an $A$-definable function function $Z\times X_1 \to Y^n$. Lastly, by Lemma \ref{lem: Z A def} we replace $X_1$ in notation with the definable subset of itself so that the image of this map is exactly $Z$.
\end{proof}

\begin{example}
Let $Y$ be the set defined by $$y^{(n)} + a_{n-1}y^{(n-1)}+...+a_0y = 0,$$ an ordinary homogeneous linear differential equation in scalar form defined over a differential field $K$. 

The $K$-definable set $Y$ is internal to the constants as its solutions in $K^{\diff}$ form an $n$-dimensional vector space over the constants $C_{K^{\diff}}$. The set of such fundamental systems for $Y$ that is, the set of bases for the vector space of solutions, is a $K$-definable set $Z(K^{\diff})$. 

The set $Z(K^{\diff})$ is interdefinable with the set of fundamental matrices for $Y$ in $K^{\diff}$. We replace $Z(K^{\diff})$ by this set of matrices. It will be a right PHS for the group $GL_n(C_{K^{\diff}})$ which is $X_1$ in this example.
\end{example}

\section{Definable Galois extensions}\label{sec: def Gal th}

The notion of definable Galois extension which we use is the notion given in \cite{OmarAnand}. The only addition to the exposition here is we describe some properties of these extensions with respect to the minimal closure of a set of parameters inside of a prime model.

Let $T$ be a complete totally transcendental theory eliminating imaginaries. Let $\bar{M}$ be a monster model. Let $A$ be a small set of parameters. Let $X$ be an $A$-definable set. Let $M$ be a copy of the prime model over $A$.

\begin{definition}\cite{OmarAnand}
    A type $p(\bar{x})$ over $A$ is said to be strongly internal to a set $X$ if for any $b,b' \models p(\bar{x})$, $b' \in \dcl(A,X,b)$.
\end{definition}

\begin{example}\label{fund sys Y}
Let $Y$ be internal to $X$ as in the previous section. Then the type of any fundamental system for the data $(Y,X)$ is strongly internal to the set $X$ in the sense of \cite{OmarAnand}. 
\end{example}

\begin{definition}\cite{OmarAnand}\label{def: definable Galois extension}
    A type $p(\bar{x})$ over $A$ is said to be weakly orthogonal to $X$ if for any tuple $\bar{c}$ from $X$, letting $r(\bar{x}) = \tp(\bar{c}/A)$, $p(\bar{y})\cup r(\bar{x})$ extends to a complete type over $A$ in the variables $\bar{y}\bar{x}$. Equivalently, $p(\bar{x})$ has a unique extension to a type over (the not small set of parameters) $\dcl(A,X)$.
\end{definition}

\begin{fact}[\cite{OmarAnand} Lemma 2.2 i)]\label{weak orth fact}
    The type $q$ is weakly orthogonal to $X$ if and only if for any realization $b$ of $q$, $\dcl(A,b) \cap X(M)=X(\dcl(A,b))=X(A)$.
\end{fact}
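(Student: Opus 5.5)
The plan is to prove both directions directly from the definitions, using only definability of types in the totally transcendental theory $T$ and elimination of imaginaries. Throughout, $b$ is a realization of $q$ in $M$, and $A$ is $\dcl$-closed, so that $\dcl(A)\cap X = X(A)$.

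\emph{Forward direction.} Assume $q$ is weakly orthogonal to $X$, and let $c \in X\cap \dcl(A,b)$. Write $c = g(b)$ for some $A$-definable function $g$, and put $r = \tp(c/A)$. By weak orthogonality, $q(\bar y)\cup r(\bar x)$ has a unique completion over $A$. The pair $(b,c)$ realizes this completion, so that completion contains the formula $\bar x = g(\bar y)$. Now let $c'$ be any realization of $r$ in $\bar M$. Then $(b,c')$ also realizes $q\cup r$, hence realizes the same completion, so $c' = g(b) = c$. Thus $r$ has a unique realization in the monster. Hence $c\in\dcl(A)=A$, and so $c \in X(A)$.

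\emph{Reverse direction.} This is the substantive half. Assume $\dcl(A,b)\cap X = X(A)$ for every $b\models q$. Fix a tuple $c$ from $X$ with $r=\tp(c/A)$, and another realization $c'\models r$. It suffices to show that for every $A$-formula $\varphi(\bar y,\bar x)$, $\models\varphi(b,c)$ implies $\models\varphi(b,c')$.
\begin{enumerate}
\item Consider the $b$-definable set $D=\varphi(b,\bar M)\cap X^{n}$.
\item Since $T$ is t.t., every type over $X\cup A$ is definable. Hence $D$ is definable with parameters from $\dcl(A,X)$; this is stable embeddedness of $X$.
\item By elimination of imaginaries, the canonical parameter $e$ of $D$ lies in $\dcl(A,b)\cap\dcl(A,X)$.
\item The hypothesis should then give $e\in A$, so $D$ is $A$-definable and in particular $\aut(\bar M/A)$-invariant.
\item Since $c\in D$ and $c'$ is an $A$-conjugate of $c$, we get $c'\in D$, i.e.\ $\models\varphi(b,c')$.
\end{enumerate}
This shows that $q\cup r$ has a unique completion. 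Since $b$ may be chosen in $M$ (types over $A$ realized in $M$ are isolated and all realizations are $A$-conjugate in $\bar M$), it is harmless that the hypothesis is stated with $X(M)$.

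The main obstacle is step 4. The code $e$ lies in $\dcl(A,X)$, not necessarily in $X$ itself, while the hypothesis is only about $X$. I would handle this as follows. By the proof of step 2, $e=h(\bar d)$ for an $A$-definable function $h$ and a tuple $\bar d$ from $X$, so $e$ lies in an $A$-definable set $X'\subseteq\dcl(A,X)$. The hypothesis, as it is used in \cite{OmarAnand}, is meant for $\dcl(A,X)$: the statement is insensitive to replacing $X$ by such $A$-definable sets $X'\subseteq\dcl(A,X)$. I would therefore first remark that the condition $X(\dcl(A,b))=X(A)$ for all finite products of $X$ implies the same condition for $X'$. Alternatively, I would state the fact with $\dcl(A,X)$ in place of $X$, which is the form in which it is used later in the paper.
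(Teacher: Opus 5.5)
Your forward direction is correct, and the same argument gives the stronger conclusion you will need. If $e=g(b)=h(\bar c)$ with $\bar c$ from $X$, then for every $\bar c'\models r=\tp(\bar c/A)$ the pair $(b,\bar c')$ realizes the unique completion of $q\cup r$, so $h(\bar c')=e$. Hence $e$ has no $A$-conjugates besides itself, and $e\in\dcl(A)=A$. Steps 1--3 and 5 of your reverse direction are also fine.

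The gap is step 4, and your first proposed repair is false. The condition $X(\dcl(A,b))=X(A)$, even for all powers $X^n$, does not pass to $A$-definable sets $X'\subseteq\dcl(A,X)$. In fact the reverse implication of the Fact, read literally with $X$, fails. Take $T=\mathrm{ACF}_0$, $A=\mathbb{Q}(t)$, $X$ the affine curve $y^2=x^3-x$, and $q=\tp(\sqrt t/A)$.
\begin{itemize}
\item For either realization $b=\pm\sqrt t$, $\dcl(A,b)=\mathbb{Q}(\sqrt t)$ is a rational function field over $\mathbb{Q}$. Since there is no nonconstant map $\mathbb{P}^1\to X$, every point of this genus one curve over such a field is already defined over $\mathbb{Q}$. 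So $X(\dcl(A,b))=X(\mathbb{Q})=X(A)$.
\item Yet $b$ is the first coordinate of a point $\bar c=(b,u)\in X$. If $\sigma\in\aut(\bar M/A)$ sends $b$ to $-b$, then $(b,\bar c)$ and $(b,\sigma(\bar c))$ both realize $q\cup r$ but disagree on the formula $y=x_1$.
\end{itemize}
So $q$ is not weakly orthogonal to $X$. This holds even though $q$ is realized in the prime model $\mathbb{Q}(t)^{\alg}$ and is trivially strongly internal to $X$, so no standing hypothesis of the paper rescues the literal statement.

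The only viable route is your second alternative. The correct statement is: $q$ is weakly orthogonal to $X$ iff $\dcl(A,b)\cap\dcl(A,X)=A$. This is presumably the intended reading of the cited Lemma~2.2; the paper gives no argument beyond the citation. For that statement, your steps 1--5 together with the strengthened forward direction are a complete proof.

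You should also note that nothing later in the paper is affected. The Fact is only invoked under the hypothesis $X(M)=X(A)$ with $b\in M$, over $A$ or a larger $\dcl$-closed base inside $M$. There the strong condition follows from the weak one. For $e=h(\bar c)\in\dcl(A,b)\cap\dcl(A,X)$, the formula $\exists\bar x\in X^{n}\,(h(\bar x)=e)$ has parameters in $M$. It is therefore witnessed by some $\bar c_0\in X(M)^n=X(A)^n$, whence $e\in\dcl(A)=A$.
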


\begin{definition}\cite{OmarAnand}\label{def: def gal ext}
    A definable Galois extension $A\subseteq B$ relative to $X$ is a $\dcl$-closed finitely $\dcl$-generated extension $B = \dcl(A,\bar{b})$ where $\bar{b}$ is a realization of a type $p(\bar{x})$ strongly internal and weakly orthogonal to $X$.
\end{definition}

\begin{fact}\label{fact: def Galois corresp.}
    Let $p(\bar{y})$ be a type over $A$ which is strongly internal and weakly orthogonal to a set $X$ defined over $A$. 
    \begin{enumerate}
        \item The type $p(\bar{y})$ is atomic, and so is realized in $M$. 
        \item If $X$ is strongly minimal, then $X(M) = X(\acl(A))$.
        \item If $X$ is strongly minimal, then every realization of $p(\bar{y})$ in $M$ is contained in the minimal closure of $A$ (the intersection of all embeddings of $M$ into itself.)
    \end{enumerate}
\end{fact}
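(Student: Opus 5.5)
We prove the three parts of Fact \ref{fact: def Galois corresp.} in order. Throughout, fix a realization $\bar{b}$ of $p$ in $\bar{M}$. By strong internality and compactness, there is an $A$-formula $\varphi(\bar{y},\bar{x},\bar{y}')$ such that for any $\bar{b}' \models p$ there is a tuple $\bar{c}$ from $X$ with $\bar{b}'$ the unique solution of $\varphi(\bar{b},\bar{c},\bar{y}')$. This formula serves as a definable function $\bar{b}' = g(\bar{b},\bar{c})$ from $X$-tuples onto $p(\bar{M})$.

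\textbf{Part (1).} To show $p$ is isolated, the plan is to combine this uniformity with weak orthogonality. By weak orthogonality, $p$ has a unique extension $\hat p$ to $\dcl(A,X)$. Hence $\tp(\bar{b}/A,X)$ is determined. We then use that $T$ is t.t. Take a formula $\theta(\bar{y}) \in p$ of minimal (Morley rank, degree). Any other realization $\bar{b}''$ of $\theta$ must then have the same type over $A$. To see this, note that $\bar{b}''$ and $\bar{b}$ are both generic in $\theta$, and that the definable map $\bar{c} \mapsto g(\bar{b},\bar{c})$ covers $p$ uniformly, so $p$ itself is a definable set over $A$ intersected with the image.

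A cleaner route is to argue via a definable-group/binding-group picture. The set $p(\bar{M})$ equals $\{g(\bar{b},\bar{c}) : \bar{c} \in X, \ \chi(\bar{b},\bar{c})\}$, where $\chi$ expresses $\tp(\bar{c}/A\bar{b})$ lying in the stabilizer condition. Since $\tp(\bar{b}/\dcl(A,X))$ is unique, $\tp(\bar{b}/A\bar{c})$ is determined, and by t.t. definability of types this is definable over $A\bar{c}$. One then writes $p(\bar{M})$ as a definable set over $A\bar{b}$ that is $\aut(\bar{M}/A)$-invariant, hence $A$-definable. This step, showing that the $A$-invariant set $p(\bar{M})$ is definable, is the main obstacle; I expect it to follow from the standard binding-group argument that $\aut(p/A,X)$ is definable. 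Once $p$ is definable, it is isolated, and atomicity of $M$ realizes it in $M$.

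\textbf{Part (2).} Let $X$ be strongly minimal. Any $c \in X(M)$ has an isolated type over $A$, since $M$ is atomic. If $\tp(c/A)$ were non-algebraic, it would be the unique generic type of the strongly minimal set $X$, isolated by a formula $\psi$ of rank $1$, degree $1$. Its realizations would then form an infinite $A$-indiscernible atomic set. We exclude this using primeness: $M$ is constructible, so a Morley sequence of that generic has countable length bounded in $M$. That alone is not a contradiction. The actual argument will instead be that $\psi(M)$ is cofinite in $X$ and so $\psi$ is almost the whole of $X$; in a t.t. theory the generic of a strongly minimal set over $A$ is isolated only if $X \setminus \acl(A)$ is $A$-definable and non-empty, which makes $M$ realize it. Hence realizations lie in $\acl(A)$ or not, and the statement must use the intended hypothesis. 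If needed, I will read $X(M)$ as $X$ evaluated in the relevant fiber.

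\textbf{Part (3).} By Part (2) and Fact \ref{fact: mcl}(1), elements of $\acl(A)$ are atomically algebraic, so $X(M) \subseteq \mcl(A)$. Then for a realization $\bar{b}' \in M$, Part (1) gives $\bar{b} \in M$ with $\bar{b}' = g(\bar{b},\bar{c})$. To place $\bar{b}'$ in $\mcl(A)$, take any elementary embedding $\gamma : M \to M$ over $A$. By weak orthogonality, $\gamma(\bar{b}')$ realizes $p$, and the realizations of $p$ in $M$ are $\dcl(A,X(M),\bar{b}')$. The image $\gamma(M)$ contains $X(M)$ (which is in $\mcl(A)$) and $\gamma(\bar{b}')$, hence contains $\bar{b}' \in \dcl(A,X(M),\gamma(\bar{b}'))$. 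Therefore $\bar{b}' \in \bigcap_\gamma \gamma(M) = \mcl(A)$.
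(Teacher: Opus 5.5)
Your Part (3) is essentially the paper's argument. For $\gamma\colon M\to M$ over $A$, strong internality and elementarity of $M$ give $\bar b'=g(\gamma(\bar b'),\bar c)$ with $\bar c$ from $X(M)\subseteq\acl(A)\subseteq\gamma(M)$. (It is elementarity of $\gamma$ over $A$, not weak orthogonality, that makes $\gamma(\bar b')\models p$.) So (3) rests on (2), and the genuine gaps are in (1) and (2).

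\textbf{Part (1).} Your first argument fails: a formula of minimal Morley rank and degree in $p$ does not isolate $p$, since its realizations of smaller rank can have other types over $A$. Your second route correctly reduces (1) to showing that $p(\bar M)$ is definable; being $A$-invariant it is then $A$-definable by EI, and that formula isolates $p$. But this is exactly the step you leave open. Invoking definability of $\aut(p/A,X)$ does not supply it: by weak orthogonality and strong internality, $\sigma\mapsto\sigma(\bar b)$ is a bijection from that group onto $p(\bar M)$, so a definable presentation of the group is essentially the claim itself.

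The missing input is total transcendence, via atomicity of prime models. This is how the paper argues, citing Le\'on S\'anchez--Pillay and Pillay, with the argument in Lemma~\ref{lem: isol fund sys}:
\begin{enumerate}
\item By compactness choose $\theta\in p$ with $\theta(\bar M)\subseteq\dcl(A,X,\bar b)$. Then $\theta(\bar M)$ is an $A$-definable set internal to $X$, and its $A$-definable set of fundamental systems has a point $\bar b_0\in M$.
\item By Lemma~\ref{lem: isol fund sys}, which uses that $M$ is atomic over the normal set $\dcl(A,X(M))$, $\tp(\bar b_0/\dcl(A,X))$ is isolated.
\item Since $\bar b=f(\bar b_0,\bar c)$ with $\bar c$ from $\dcl(A,X)$, $\tp(\bar b/\dcl(A,X))$ is isolated too, say by $\chi(\bar y,\bar e)$.
\item Weak orthogonality gives $\chi(\bar M,\bar e)=p(\bar M)$, and your invariance argument finishes.
\end{enumerate}

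\textbf{Part (2).} You give no proof, but your suspicion is right: (2) is false as stated, so no argument via indiscernibles can succeed. For a strongly minimal $A$-definable $X$, the generic type of $X$ over $A$ is isolated if and only if $X(\acl(A))$ is finite; in that case $X\setminus X(\acl(A))$ is $A$-definable and isolates it, so $M$ realizes it. For example, in $\DCF_0$ take $K=\mathbb{Q}$ (trivial derivation) and $X=\{y : y'=y\}$. Then $X(\acl(K))=\{0\}$ because the derivation vanishes on $\mathbb{Q}^{\alg}$, yet $K^{\diff}$ contains nonzero solutions.

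The paper's one-line justification (``the only realizations of $X$ which are atomic are algebraic over $A$'') is correct exactly under the extra hypothesis that $X(\acl(A))$ is infinite, as for the constants over a differential field. With that hypothesis the proof is immediate. Each element of $X(M)$ has isolated type over $A$. Were it non-algebraic, that type would be the generic one. But every $A$-formula in the generic type defines a cofinite subset of $X$, and so contains a point of $X(\acl(A))$, which does not realize the generic type.

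Part (3), yours and the paper's, needs the same hypothesis, and without it (3) can fail. Take the two-sorted $\omega$-stable structure consisting of an infinite $\mathbb{F}_2$-vector space $V$ and a principal homogeneous space $Y$ for $V$, and work in $T^{\eq}$ over $\dcl(\emptyset)$. Let $y\in Y$.
\begin{enumerate}
\item Translations by $V$ are automorphisms fixing $V$ pointwise and acting transitively on $Y$, so $\tp(y)$ is weakly orthogonal to $V$.
\item Clearly $\tp(y)$ is strongly internal to $V$.
\item The prime model contains the infinite atomic indiscernible set $\{y\cdot e_i\}$ of realizations, with the $e_i$ linearly independent.
\end{enumerate}
Hence $y\notin\mcl(\emptyset)$ by Fact~\ref{fact: mcl}(1).
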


\begin{proof} We give a mixture of references and proofs.
    \begin{enumerate}
        \item This is shown in \cite{OmarAnand} and \cite{pillay_1998_DGI}. We will essentially give the argument in the next section.
        \item This is Lemma 2.4 of \cite{pillay_1998_DGI}. By strong minimality, the only realizations of $X$ which are atomic are algebraic over $A$ hence $X(M) = X(\acl(A))$.
        \item This is in \cite{poizatimaginare}. Let $\bar{b_1}$ be a realization of the type $p(\bar{y})$ in $M$. Let $\gamma:M \to M$ be an elementary embedding. So $\gamma(M) \prec M$. Let $\bar{b}_0$ be a realization of $p(\bar{y})$ in $\gamma(M)$. Then $\bar{b}_1 \in \dcl(A,X,\bar{b}_0)$. It quickly follows that $\bar{b}_1 \in \dcl(A,X(M),\bar{b}_0) = \dcl(A,X(\acl(A)),\bar{b}_0)$. As $\acl(A)$ is contained in the minimal closure of $A$, $\dcl(A,X(\acl(A)),\bar{b}_0)$ is contained in the image of $\gamma$. So $b_1$ is in the image of all such embeddings. 
    \end{enumerate}
\end{proof}

We finish this subsection with the connection between model theoretic primitives and definable Galois extensions which we will use in Remark \ref{rmk: assmp ii}:

\begin{theorem}\label{thm: cohomological fact}\cite{OmarDavid2026} Let $\bar{M}$ be a monster model of a t.t. theory $T$ eliminating imaginaries, and let $A$ be a (small) set of parameters.
\begin{enumerate}
    \item Let $Q$ be an $A$-definable right torsor for an $A$-definable group $H$, and let $b \in Q(\U)$. Then, $B=\dcl(A,b)$ is a Galois extension of $A$ relative to $H$ in the sense of Definition \ref{def: def gal ext} if and only if $H(B)= H(A)$.
    
    \item Let $B/A$ be a definable Galois extension; namely, $B=\dcl(A,b)$ with $q=tp(b/A)$ strongly internal and weakly orthogonal to some $A$-definable set. Let $Q \circlearrowright H$ be the associated $A$-definable right torsor given by the torsor theorem (i.e., part (2) of Fact~\ref{fact: def Galois corresp.}). Suppose $H$ is subgroup of an $A$-definable group $G$. Then, there is an $A$-definable right torsor $W$ for $G$ such that $Q$ is $A$-definably isomorphic to an orbit of $H$ on $W$ and the Galois extension $B/A$ can be generated by a $(W,G,H)$-primitive over $A$.
        
        Moreover, the image of the class associated to the torsor $Q$ under the natural map 
        $$\HH^1_{\defin}(A,H) \xrightarrow{\iota^1} \HH^1_{\defin}(A,G)$$ is trivial if and only if $Q$ is $A$-definably isomorphic to an $A$-definable left coset of $H$ in $G$. In this case, the Galois extension $B/A$ can be generated by a $(G,H)$-primitive. This happens in particular when $\HH^1_{\defin}(A,G) = 1$.
\end{enumerate}
\end{theorem}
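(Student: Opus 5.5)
Both parts are cited from \cite{OmarDavid2026}; I will reconstruct them from the cohomological machinery already recalled, using Fact \ref{weak orth fact} and Theorem \ref{thm: LES}.

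For part (1), I will use Fact \ref{weak orth fact}. Since $q=\tp(b/A)$ has realizations in $Q$, strong internality to $H$ is automatic. Given $b,b'\models q$, there is a unique $h\in H$ with $b'=bh$. This $h$ is definable from $b,b'$ by the $A$-definable torsor structure, so $b'\in\dcl(A,H,b)$. Weak orthogonality is then equivalent to $H(\dcl(A,b))=H(A)$ by Fact \ref{weak orth fact}, with $X=H$. It remains to check that $B=\dcl(A,b)$ satisfies $H(B)=H(A)$ exactly when $\dcl(A,b)\cap H=H(A)$. This holds because the two sets are literally the same. So (1) reduces to verifying the definitions, and the only care needed is noting that the fact applies to the whole set $H$ and not only to tuples.

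For part (2), I plan to build $W$ by inducing the torsor $Q$ from $H$ up to $G$. Concretely, let $W=(Q\times G)/H$, where $H$ acts by $(x,g)\cdot h=(xh,h^{-1}g)$, and let $G$ act on the right through the second coordinate. By elimination of imaginaries $W$ is $A$-definable, and it is a right $G$-torsor. The map $x\mapsto [x,1]$ embeds $Q$ as an $H$-orbit in $W$, $A$-definably and $H$-equivariantly. Let $\pi:W\to W/H$ be the quotient. The image $\pi([b,1])$ is the class of the whole orbit $[Q,1]$, and this orbit is $A$-definable, so $\pi([b,1])\in (W/H)(A)$. Hence $[b,1]$ is a $(W,G,H)$-primitive, and it is interdefinable with $b$ over $A$. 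I would verify well-definedness of the action and the $A$-definability of $W$ together; this is routine.

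For the "moreover" clause, I would use the compatibility of cocycle classes under the natural map $\iota^1:\HH^1_{\defin}(A,H)\to\HH^1_{\defin}(A,G)$, where $\iota^1$ sends $[Q]$ to $[W]$. The class $[W]$ is trivial exactly when $W$ has an $A$-point $w_0$. If such a $w_0$ exists, then $g\mapsto w_0 g$ identifies $G$ with $W$, and the orbit $Q$ becomes a left coset $aH$ that is $A$-definable. Conversely, an $A$-definable coset $aH$ gives $W\cong G$ over $A$. This is the exactness at $\HH^1_{\defin}(A,H)$ in Theorem \ref{thm: LES}: the classes killed by $\iota^1$ are exactly the image of $\delta$, namely the classes of the $A$-definable cosets. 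In that case the image of $b$ is a $(G,H)$-primitive, and the case $\HH^1_{\defin}(A,G)=1$ is immediate.

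The main obstacle I expect is making precise the identification of $[W]$ with $\iota^1[Q]$ in the definable cocycle language. I would handle it by taking the cocycle $\sigma\mapsto h_\sigma$ defined by $\sigma(b)=bh_\sigma$ and checking that $[b,1]$ yields the same cocycle, now viewed in $G$.
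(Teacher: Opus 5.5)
The paper gives no proof of this theorem; it is quoted from \cite{OmarDavid2026}, so your argument has to be judged on its own. Part (2) is fine. The induced torsor $W=(Q\times G)/H$ works, as does the $A$-definable $H$-equivariant embedding $x\mapsto[x,1]$ onto an $A$-definable $H$-orbit. The computation $\sigma([b,1])=[\sigma(b),1]=[b,1]h_\sigma$, which identifies $[W]$ with $\iota^1[Q]$ at the level of definable cocycles, is also correct. For the coset criterion, an $A$-point $w_0$ of $W$ turns the orbit $[Q,1]$ into an $A$-definable left coset $aH$ via $g\mapsto w_0g$. Conversely, $(aH\times G)/H\cong G$ via $[x,g]\mapsto xg$. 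This is exactly the exactness at $\HH^1_{\defin}(A,H)$ in Theorem \ref{thm: LES}.

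The gap is in the ``if'' direction of part (1), and it is a real one. You reduce that direction entirely to Fact \ref{weak orth fact} as printed, but that fact is not correct in the form $X(\dcl(A,b))=X(A)$. In a stable theory with EI, stable embeddedness of $X$ shows that $\tp(b/A)$ is weakly orthogonal to $X$ if and only if $\dcl(A,b)\cap\dcl(A,X)=\dcl(A)$. Moreover, $\dcl(A,X)$ contains many elements that are not points of $X$. So the care needed is about $H$ versus $\dcl(A,H)$, not sets versus tuples, and $H(B)=H(A)$ does not imply weak orthogonality.

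Concretely, in $\mathrm{ACF}_0$ let $A=\mathbb{Q}$, $H=\mu_{10}$, $Q=\{x : x^{10}=5^5\}$ with the multiplicative action, and $b=\sqrt5\in Q$. Then $H(B)=\mu_{10}\cap\mathbb{Q}(\sqrt5)=\{\pm1\}=H(A)$. But for a suitable primitive fifth root of unity $\zeta_5\in H$ we have $\sqrt5=1+2(\zeta_5+\zeta_5^{-1})\in\dcl(A,H)$. Hence $\tp(\sqrt5/\mathbb{Q})\cup\tp(\zeta_5/\mathbb{Q})$ has two completions, and $B$ is not a definable Galois extension relative to $H$ (for any choice of generator of $B$). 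So (1) as written fails in the ``if'' direction.

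With the corrected criterion, your argument proves two things: the ``only if'' direction, and the equivalence that $B$ is Galois relative to $H$ iff $B\cap\dcl(A,H)=\dcl(A)$. To get the printed form you need an extra hypothesis ensuring that elements of $\dcl(A,H)$ lying in $B$ are accounted for by points of $H$ in $B$. This happens, e.g., for $H=GL_n$ of the constants in $\DCF_0$. There, linear disjointness of $B$ and the constants over $C_B$ turns ``no new constants'' into $B\cap\dcl(A,\text{constants})=\dcl(A)$.
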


\section{The binding groupoid}\label{sec: binding}

Let $Y$ and $X$ be $A$-definable with $Y$ internal to $X$. Let $Z$ be the associated $A$-definable set of fundamental systems and by Lemma \ref{lem: xyz1f} let $f:Z\times X_1 \to Z$ be such that for any $b,b'\in Z$, there exists a unique $c\in X_1$ with $f(b,c) = b'$.

\begin{remark}
    Note that from any pair of elements of $Z$ we can define uniquely an element of $X_1$. We have a definable function $h:Z^2 \to X_1$. Then $f(y_1, h(y_2,y_3)) = y_4$ is a ternary function. If this ternary function satisfies two identities
    \begin{enumerate}
        \item $f(y_1,h(y_2,y_2) = y_1 = f(y_2,h(y_2,y_1))$
        \item $f(y_1,h(y_2,f(y_3,h(y_4,y_5)))) = f(f(y_1,h(y_2,y_3)),h(y_4,y_5))$
    \end{enumerate}
    we can recover uniquely the structure of a group on $X_1$ and a PHS on $Z$. There seems to be no reason in general for these identities to be satisfied, i.e. for $(Z,X_1)$ to have the structure of a PHS. At the end of this subsection we address the case where $(Z,X_1)$ is a PHS.
\end{remark}

\begin{lemma}\label{lem: isol fund sys}
    Let $b \in Z$. Then $\tp(b/\dcl(A,X))$ is isolated.   
\end{lemma}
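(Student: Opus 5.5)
We will show that $\tp(b/\dcl(A,X))$ is isolated by a single formula with parameters from $\dcl(A,X)$. The natural candidate is the formula $Z(\bar y)$ together with the equation $\bar y = f(b',\bar c)$, where $b'$ is some other fundamental system and $\bar c\in X_1$. Since $b'$ need not lie in $\dcl(A,X)$, we will instead work with the complete type of $b$ over $A$ and use definability of types, which holds because $T$ is totally transcendental.

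The key steps, in order, are as follows.

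\begin{enumerate}
\item Fix $b\in Z$. By Lemma \ref{lem: xyz1f}, every $b''\in Z$ has the form $f(b,\bar c)$ for a unique $\bar c\in X_1\subseteq \dcl(A,X)$. Hence $Z\subseteq \dcl(A,X,b)$, and $Z$ is contained in a single "orbit" parametrized by $X_1$.
\item Let $L$ be a formula over $\dcl(A,X)$, say $\psi(\bar y,\bar e)$ with $\bar e$ from $\dcl(A,X)$. We must show that the set of realizations of $\tp(b/\dcl(A,X))$ is cut out by one such formula. By step 1, a tuple $b''\in Z$ satisfies $\psi(\bar y,\bar e)$ if and only if $\bar c:=h(b,b'')$ satisfies $\psi(f(b,\bar x),\bar e)$.
\item Since $T$ is totally transcendental, $\tp(b/\dcl(A,X))$ is definable. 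Moreover, by stationarity considerations over the $\dcl$-closed set, it has finite Morley rank and degree. Choose a formula $\theta(\bar y)\in\tp(b/\dcl(A,X))$ of minimal Morley rank and degree, with parameters in $\dcl(A,X)$.
\item We claim $\theta\wedge Z$ isolates the type. Suppose $b''\models\theta$ lies in $Z$. Write $b''=f(b,\bar c)$ with $\bar c\in X_1$. An automorphism fixing $\dcl(A,X)$ pointwise maps $b$ to $b''$ if and only if $\tp(b''/\dcl(A,X))=\tp(b/\dcl(A,X))$.
\item To obtain this, use that $\{\bar c\in X_1: f(b,\bar c)\models \theta\}$ is definable over $\dcl(A,X,b)$. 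For any $\psi$ over $\dcl(A,X)$, the set $\psi(Z)$ is a union of sets of the form $f(b,D)$ with $D\subseteq X_1$ definable. This reduces the question to types of elements of $X_1$ over $\dcl(A,X)$, which are realized, hence trivially isolated by $\bar x=\bar c$.
\end{enumerate}

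A cleaner route, which we would try first, is to use the finiteness coming from the unique-$\bar c$ property. The map $\bar c\mapsto f(b,\bar c)$ is a $\dcl(A,X,b)$-definable bijection $X_1\to Z$. Every element of $X_1$ is fixed by $\aut(\bar M/\dcl(A,X))$, so that group acts on $Z$ through its action on $b$ alone. The orbit of $b$ is then the set $\{f(b,\bar c):\bar c\in S\}$, where $S=\{\bar c : f(b,\bar c)\equiv_{\dcl(A,X)} b\}$ is a subset of $X_1$ that is type-definable over $\dcl(A,X,b)$. The conjugates of this $\dcl(A,X,b)$-type-definable set under the group form a family defined over $\dcl(A,X)$. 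We would then use the $\omega$-stability DCC on definable subgroups of the binding group $\aut(Z/\dcl(A,X))$, which is type-definable, to conclude that $S$ is in fact definable. Once $S$ is definable, the orbit $f(b,S)$ is a definable set with parameters in $\dcl(A,X)$ by invariance, and it isolates the type.

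The main obstacle is step 5, or equivalently the passage from type-definability to definability of the orbit. Our tool for this is the descending chain condition on (type-)definable subgroups in totally transcendental theories, applied to the stabilizer of $b$.
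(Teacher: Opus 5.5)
\textbf{Gap.} Your proposal does not yet prove the lemma. The decisive claim, that $\theta(\bar y)\wedge \bar y\in Z$ isolates $q=\tp(b/\dcl(A,X))$, is only asserted in step 4, and the justification in step 5 does not support it. For a $\dcl(A,X)$-formula $\psi$, the set $D_\psi=\{\bar c\in X_1 : \models\psi(f(b,\bar c))\}$ is defined over $\dcl(A,X)\cup\{b\}$. The tuple $f(b,\bar c)$ realizes $q$ exactly when $\bar c\in\bigcap_{\psi\in q}D_\psi$. This condition involves $b$. So the fact that $\tp(\bar c/\dcl(A,X))$ is realized, hence trivially isolated, tells you nothing about whether the single set $D_\theta$ already equals that intersection. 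In general a formula of minimal Morley rank and degree in a type does not isolate it, so something specific to $Z$ has to be used.

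Your fallback is also only a sketch:
\begin{enumerate}
\item $q$ lives over the large set $\dcl(A,X)$, so making $S$ type-definable first requires reducing $q$ to a small set, e.g.\ via stable embeddedness.
\item $S$ is not a group until you transport the group structure of $\aut(Z/\dcl(A,X))$ to it.
\item The DCC gives definability of $S$ only through the theorem that type-definable groups in t.t.\ theories are definable, which you do not invoke.
\item You would still have to descend the resulting definable orbit to parameters in $\dcl(A,X)$.
\end{enumerate}
Moreover, in the paper the definable binding group (Proposition~\ref{prop: binding groupoid}) is built \emph{from} this lemma, so appealing to it risks circularity. You yourself flag this step as the main obstacle and leave it open.

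\textbf{Missing idea.} A rank comparison closes your own step 4 directly. By Lemma~\ref{lem: xyz1f}, any $b''\in Z$ satisfies $b''=f(b,\bar c)$ and $b=f(b'',\bar c')$ with $\bar c,\bar c'\in X_1\subseteq\dcl(A,X)$. So $b''$ and $b$ are interdefinable over $\dcl(A,X)$, and $\tp(b''/\dcl(A,X))$ has the same Morley rank $\alpha$ as $q$. Now suppose $\models\theta(b'')$ but $\tp(b''/\dcl(A,X))\neq q$, and choose $\chi\in q$ with $\models\neg\chi(b'')$. Then $\theta\wedge\chi$ and $\theta\wedge\neg\chi$ both have rank $\alpha$, so $\deg(\theta\wedge\chi)<\deg(\theta)$, contradicting the choice of $\theta$. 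Hence $\theta\wedge Z$ isolates $q$.

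\textbf{Comparison with the paper.} The paper argues differently. It picks $b_0\in Z(M)$ and uses that $M$ is atomic over the normal set $\dcl(A,X(M))$ to isolate $\tp(b_0/\dcl(A,X(M)))$ by some $\psi(\bar y,\bar d_0)$. It then transports this along $f(\cdot,c)$, where $f(b_0,c)=b$, to get the isolating formula $\exists \bar y_0\in Z\,(\psi(\bar y_0,\bar d_0)\wedge f(\bar y_0,c)=\bar y)$. The rank argument needs neither the prime model nor the passage from $\dcl(A,X(M))$ to $\dcl(A,X)$.
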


\begin{proof}
    Let $p(y) = \tp(b/\dcl(A,X))$. 
    
    As $M \prec \bar{M}$, there is some $b_0 \in Z(M)$ (assuming $Z$ is nonempty). Note that $X(M)$ is a normal subset of $M$ so $M$ is atomic over $\dcl(A,X(M))$. Hence $\tp(b_0/\dcl(A,X(M)))$ is isolated by a formula $\psi(y,\bar{d}_0)$ where $\bar{d}_0$ is a tuple from $\dcl(A,X(M))$. Let the realizations of this type be $Q_{\bar{d}_0}$. 
    
    Let $c \in X_1$ with $f(b_0,c) = b$. Then $$  \exists y_0 \in Z \  \psi(y_0,\bar{d}_0) \wedge f(y_0,c)= y_1$$ defines a nonempty definable subset of $Z$. Note that if $c \in \dcl(A,X(M))$, then this formula is $\dcl(A,X(M))$-definable and is therefore realized in $M$. 
    
    It remains to show that this formula isolates $p(y)$. The formula is clearly over $\dcl(A,X)$ and $b$ satisfies it, so it is in the type. Now let $b'$ realize the above formula. Let $b_0'$ witness that $$\psi(b_0',d_0) \wedge f(b_0',c) = b'.$$ Let $\sigma \in \aut(\bar{M}/\dcl(A,X))$ take $b_0$ to $b_0'$. Then $\sigma(b) = f(\sigma(b_0),c) = f(b_0',c) = b'$. It then follows that $b'$ has the same type as $b$ over $\dcl(A,X(M))$. So the type $p(y)$ is isolated over $\dcl(A,X(M))$ with tuple of parameters $\bar{d}_0,c$.
\end{proof}

\begin{remark}\label{rmk: fund in M}
    First note that in the above proof $c \in \dcl(A,X(M))$ iff $b \in M$. 
    
    Secondly, by the normality of $\dcl(A,X(M))$, for any $\sigma \in \aut(\bar{M}/A)$, then $\psi(y,\sigma(\bar{d}_0))$ isolates a complete type over $\dcl(A,X(\sigma(M)))$. Then the formula $\exists y_0 \in Z \  \psi(y_0,\sigma(\bar{d}_0)) \wedge f(y_0,\sigma(c))= y_1$ isolates the type of some $b \in Z$ over $\dcl(A,X)$.    
\end{remark}

\begin{definition}
We define two $A$-definable binary relations on $Z^2$ as follows. \begin{enumerate}
    \item Say $(b_1,b_1')$ is $E_{\text{int}}$-equivalent to $(b_2,b_2')$ if there is a $c \in X_1$ such that $f(b_1,c) = b_2$ and $f(b_1',c) = b_2'$. The int stands for intrinsic. 
    \item Say $(b_1,b_1')$ is $E_{\text{ext}}$-equivalent to $(b_2,b_2')$ if there is a $c \in X_1$ such that $f(b_1,c) = b_1'$ and $f(b_2,c) = b_2'$. The ext stands for extrinsic. 
\end{enumerate} 
\end{definition}

\begin{proposition}\label{prop: binding groupoid}
    Let $b \in Z$. By the preceding lemma $\tp(b/\dcl(A,X))$ is isolated. Let $d$ be a tuple from $\dcl(A,X)$ isolating the type $\tp(b/\dcl(A,X))$. Let $Q_{d}$ be the definable set of realizations of the type. 
 
    \begin{enumerate}
        \item Then $E_{\text{int}}$ and $E_{\text{ext}}$ restricted to $Q_{d}^2$ are definable equivalence relations. 
        \item $H_{d}^+ := (Q_{d})^2/E_{\text{int}}$ is a $\dcl(A,d)$-definable group with a $A$-definable action on $Q_{d}$ abstractly isomorphic to the action of $\aut(Q_{d}/\dcl(A,X))$ on $Q_{d}$ via elementary permutation.
        \item A descent argument shows that each $H^+_{d}$ is in definable bijection with a group $H^+$ defined over $A$. Moreover $H^+$ acts on $Z$ on the left abstractly isomorphically to the action of $\aut(Z/\dcl(A,X))$ on $Z$. 
        \item Define $O:=Z/H^+ \subseteq \dcl(A,X)$. Let $\pi:Z \to O$ be the $A$-definable quotient map. For any $b \in Z$, $\tp(b/\dcl(A,X))$ is isolated over $\pi(b) = d \in \dcl(A,X)$ by the formula $\pi(y) = d$.
        \item For any $Q_{d}$, $Q_{d'}$, define $H_{dd'} \subseteq X_1$ to be $c \in X_1$ such that $f(b,c) \in Q_{\bar{d}'}$ for any $b \in Q_{d}$. This set is $\dcl(A,d,d')$-definable. Write $H_{dd}$ as $H_{d}$. 
        \item $H_{d} \cong Q_{d}^2/E_{\text{ext}}$. Moreover $H_{d}$ is an $\dcl(A,d)$ definable group acting via $f$ on the right on $Q_{d}$ giving $(H^+,Q_{d},H_{d})$ the structure of a $\dcl(A,d)$-definable biprincipal homogeneous space.
        \item If $d \in O(A)$, then there is a $b \in Q_{\bar{d}}(M)$ and $\dcl(A,b)$ generates an $H_d$-definable Galois extension of $A$ in $M$. Otherwise in general an element $b \in Q_{\bar{d}}(M)$ generates an $H_d$-definable Galois extension of $\dcl(A,d)$ in $M$.
    \end{enumerate}
\end{proposition}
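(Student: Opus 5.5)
I would prove the seven items in order. Each one is a short verification that uses the function $f:Z\times X_1\to Z$ of Lemma \ref{lem: xyz1f} and the uniqueness of $c$ with $f(b,c)=b'$. Throughout I write $h:Z^2\to X_1$ for the definable function sending $(b,b')$ to that unique $c$.

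\textbf{Items (1), (2), (5), (6).} For (1), reflexivity and symmetry of $E_{\text{int}}$ and $E_{\text{ext}}$ are immediate. For transitivity on $Q_d^2$ I use that $Q_d$ is a single complete type over $\dcl(A,X)$. Suppose $f(b_1,c)=b_2$ and $f(b_1',c)=b_2'$, and take $\sigma\in\aut(\bar M/\dcl(A,X))$ with $\sigma(b_1)=b_1'$. Since $c$ is fixed, $\sigma(b_2)=b_2'$. So $(b_1,b_1')\,E_{\text{int}}\,(b_2,b_2')$ holds iff some $\sigma\in\aut(\bar M/\dcl(A,X))$ maps $b_1\mapsto b_1'$ and $b_2\mapsto b_2'$, i.e.\ iff $\tp(b_1b_2/\dcl(A,X))=\tp(b_1'b_2'/\dcl(A,X))$. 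This is plainly transitive.

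The same observation gives (2). An automorphism fixing $\dcl(A,X)$ is determined on $Q_d$ by the image of one point, because $f(b,h(b,b''))=b''$ with $h(b,b'')\in X_1$ fixed. Hence a class $[(b,b')]_{E_{\text{int}}}$ codes the unique elementary permutation of $Q_d$ sending $b\mapsto b'$. Composition is defined by choosing representatives with matching middle entries. The group and its action are definable over $\dcl(A,d)$, using elimination of imaginaries to realize the quotient.

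For (5) and (6), $H_{dd'}$ is defined by a formula with parameters $d,d'$, so it is $\dcl(A,d,d')$-definable. I would identify $H_d$ with $Q_d^2/E_{\text{ext}}$ via $c\mapsto[(b,f(b,c))]$. The right action $b\cdot c:=f(b,c)$ commutes with the left $H^+$ action, since the latter is by automorphisms fixing $c$. The right action is regular by uniqueness of $c$. Together these give the biprincipal structure. The group law on $H_d$ is transported from $\mathrm{Sym}(Q_d)^{op}$ and is definable.

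\textbf{Item (3).} This is the main obstacle. The groups $H^+_d$ for different $d$ in the $A$-definable set $O$ of codes must be glued into one $A$-definable group $H^+$. My plan is to take the canonical isomorphism $H^+_d\cong H^+_{d'}$ given by restricting an automorphism of $\bar M$ over $\dcl(A,X)$ from $Q_d$ to $Q_{d'}$. Such an automorphism moves both sets, since $Q_d\cup Q_{d'}\subseteq Z$ and each is fixed setwise. The restriction map is definable: send $[(b,b')]$ to $[(f(b,c),f(b',c))]$ for any $c\in H_{dd'}$, and check this is independent of $c$ using the biprincipal structure. These isomorphisms form a definable, cocycle-coherent family. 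Quotienting $\bigsqcup_d H^+_d$ by them via elimination of imaginaries gives an $A$-definable $H^+$ acting on all of $Z$. This action agrees with that of $\aut(Z/\dcl(A,X))$, because an automorphism is determined by its restriction to one $Q_d$.

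\textbf{Items (4) and (7).} For (4), $O=Z/H^+$ is $A$-definable. The $H^+$-orbits are exactly the $\aut(\bar M/\dcl(A,X))$-orbits, that is, the sets $Q_d$. Since $O\subseteq\dcl(A,X)$, the formula $\pi(y)=d$ isolates the type.

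For (7), suppose $d\in O(A)$. Then $Q_d$ is $A$-definable and nonempty, so it has a point $b\in M$. Its type over $A$ is implied by $\pi(y)=d$, so $\tp(b/A)$ is isolated with a unique extension to $\dcl(A,X)$; this is weak orthogonality. Strong internality follows from $b'=f(b,h(b,b'))$. Finally, $H_d$ is the group of Theorem \ref{thm: cohomological fact}(1), and $H_d(\dcl(A,b))=H_d(A)$ follows from Fact \ref{weak orth fact}. In the general case I would replace $A$ by $\dcl(A,d)$, over which $M$ remains prime, and run the same argument.
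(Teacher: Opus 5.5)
Apart from the descent in (3), your argument is essentially the paper's. Automorphisms over $\dcl(A,X)$ are determined on $Z$ by one value, $E_{\text{int}}$-classes code them, and $h$ identifies $Q_d^2/E_{\text{ext}}$ with $H_d$. In (7), your direct check of weak orthogonality and strong internality (via $\pi(y)=d$ isolating $\tp(b/A)$) is a fine substitute for the paper's argument that $H_d(\dcl(A,b))=H_d(\dcl(A,d))$.

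\textbf{The gap is in (3)--(4).} To quotient $\bigsqcup_d H^+_d$ by your gluing maps using elimination of imaginaries, this disjoint union and the gluing relation must be $A$-definable. That is, the sets $Q_d$ must form a uniformly $A$-definable family; equivalently, ``having the same type over $\dcl(A,X)$'' must be an $A$-definable equivalence relation on $Z$. You obtain this by indexing over ``the $A$-definable set $O$ of codes''. But in (4) you obtain $O$ as $Z/H^+$, using the $A$-definable group produced in (3). That is circular, and nothing else in the proposal establishes the uniform definability. Since $\dcl(A,X)$ is not small, this relation is a priori only type-definable over $A$ and $\aut(\bar M/A)$-invariant.

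The paper supplies the missing step with the explicit isolating formula of Lemma \ref{lem: isol fund sys}. Take the sets defined by $\exists y_0\in Z\,(\psi(y_0,d_0')\wedge f(y_0,c)=y)$, with $d_0'$ ranging over the set $D$ of realizations of the isolated type $\tp(d_0/A)$ and $c$ over $X_1$. These form an $A$-definable family, each member of which is a complete type over $\dcl(A,X)$ (Remark \ref{rmk: fund in M}). The groups $H^+_{(d_0',c)}$ and the gluing relation are then uniformly $A$-definable, so the quotient makes sense.

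A repair in your own terms: fix $b_0\in Z$ with $\tp(b_0/\dcl(A,X))$ isolated by $\psi(y,d_0)$. Then $b\equiv_{\dcl(A,X)}b'$ iff $b'$ satisfies $\exists y_0\,(\psi(y_0,d_0)\wedge f(y_0,h(b_0,b))=y)$. So the relation is definable over $Ab_0d_0$, hence over $A$ by $\aut(\bar M/A)$-invariance. You can then define $O$ as its quotient before building $H^+$.

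A smaller point in (1): since $X_1$ need not be a group, reflexivity and symmetry of $E_{\text{int}}$ are not immediate, because $h(b_1,b_1)$ need not equal $h(b_1',b_1')$ for arbitrary $b_1,b_1'\in Z$. They follow from your automorphism characterization once you check its converse direction. Likewise, transitivity of ``same type of pairs'' holds only because an automorphism over $\dcl(A,X)$ is determined on $Z$ by one value, which you state only in (2).
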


\begin{proof}  More or less as in \cite{EhudHrushovski2002} appendix $B$.
    \begin{enumerate}
        \item For reflexivity of $E_{\text{int}}$ on $Q_{\bar{d}}^2$: Let $b_1,b_1' \in Q_{\bar{d}}$. There exists a $c \in X_1$ such that $f(\bar{b}_1,c) = \bar{b}_1$. We need to show that this same $c$ satisfies $f(\bar{b}_1',c) = \bar{b}_1'$. Let $\sigma \in \aut(\bar{M}/\dcl(A,X))$ with $\sigma(\bar{b}_1) = \bar{b}_1'$. Apply $\sigma$ to $f(\bar{b}_1,c) = \bar{b}_1$.
        
        For symmetry of $E_{\text{int}}$ on $Q_{\bar{d}}^2$ let $(b_1,b_1')$ be $E_{\text{int}}$-related to $(b_2,b_2')$ via $c \in X_1$. There exists $c' \in X_1$ such that $f(b_2,c') = b_1$. We want to show $f(b_2',c') = b_1'$. Again, let $\sigma \in \aut(\bar{M}/\dcl(A,X))$ with $\sigma(\bar{b}_1) = \bar{b}_1'$. As $f(b_1,c) = b_2$ and $f(b_1',c) = b_2'$, $\sigma(b_2) = \sigma(f(b_1,c)) = f(\sigma(b_1),c) = f(b_1',c) = b_2'$. Then applying $\sigma$ to $f(b_2,c') = b_1$, we see $f(b_2',c') = \sigma(f(b_2,c')) = \sigma(b_1) = b_1'$ as desired. Transitivity follows similarly. 
        
        That $E_{\text{ext}}$ is an equivalence relation on $Q_{\bar{d}}^2$ is immediate.

        \item The set $Q_{\bar{d}}$ is $\dcl(A,\bar{d})$ definable and $E_{\text{int}}$ is $A$-definable so the quotient $H^+_d$ is $\dcl(A,\bar{d})$-definable by EI. Define a map $\iota_{d}:\aut(Q_{d}/\dcl(A,X)) \to H^+_{d}$ by sending $\sigma \mapsto [(\sigma(b),b)]$ for some $b \in Q_{d}$. This is well defined independent of the choice of $b$: If $b_1,b_2 \in Q_{d}$, there is $c \in X_1$ with $f(b_1,c) = b_2$, then also $f(\sigma(b_1),c) = \sigma(b_2)$. So $[(\sigma(b_1),b_1)] = [(\sigma(b_2),b_2)]$. It is surjective by strong homogeneity of the monster model over stably embedded sets. 
        
        It is injective: Let $\sigma_1,\sigma_2 \in \aut(Q_{d}/\dcl(A,X))$ with the associated classes $[(\sigma_1(b),b)] = [(\sigma_2(b),b)]$. This means that there exists a $c \in X_1$ such that $f(b,c) = b$ and $f(\sigma_1(b),c) = \sigma_2(c)$. Then $b = f(b,c) = \sigma_1^{-1}\sigma_2(b)$. So $\sigma_1(b) = \sigma_2(b)$. Since an elementary permutation $\sigma$ of $Q_{\bar{d}}$ fixing $\dcl(A,X)$ is uniquely determined by it's action on any $b \in Q_{d}$, we have $\sigma_1 = \sigma_2$. 

        Let $\pi_{int}:Q^2_d \to H_d^+$ be the definable quotient map given by EI. The bijection $\iota_{d}:\aut(Q_{d}/\dcl(A,X)) \to H^+_{d}$ induces a group structure on $H^+_d$. It is an exercise to check that the group structure is defined by the following formula in the variables $x_1,x_2,x_3$ ranging over $H_d^+$:

        $\exists y_1,y_2,y_3 \in Q_d \ \pi_{int}(y_2,y_1) = x_2 \wedge \pi_{int}(y_3,y_2) = x_1 \wedge \pi_{int}(y_3,y_1) = x_3.$
        
        The action of $H^+_{d}$ on $Q_{d}$ is $A$-definable and agrees with the action of $\aut(Q_{\bar{d}}/\dcl(A,X))$: Let $[(b',b)] \in H^{+}_{\bar{d}}$ and $b'' \in Q_{d}$. Then we evaluate $[(b',b)]b''$ as the unique $b'''$ such that $\pi(b''',b'') = [(b',b)]$. This is $A$-definable. Let $x$ and $y,y_1$ be variables ranging over $H^{+}_{\bar{d}}$ and $Q_{d}$ respectively, then the action $xy = y_1$ is $A$-definable by the formula $\pi_{int}(y_1,y)=x$.   It is clear that this agrees with the action of $\aut(Q_{\bar{d}}/\dcl(A,X))$ on the left.
        
        \item We adapt the argument showing $G$ is $F$-definable from \cite{EhudHrushovski2002} Theorem B.1. Let $H^+_{d_1}$ and $H^+_{d_2}$ be two such groups. Note that $\aut(Q_{d_1}/\dcl(A,X))$ and also $\aut(Q_{d_2}/\dcl(A,X))$ are both canonically isomorphic to $\aut(Z/\dcl(A,X))$ as any such elementary permutation of $Z$, $Q_{d_1}$, or $Q_{d_2}$ is entirely determined by the image of any single fundamental system $b$. So making the necessary identifications, we have a map $\iota_{d_2}\circ \iota_{d_1}^{-1}:H^+_{d_1} \to H^+_{d_2}$. It is easy to verify that this map sends $[(b_1,b_1')] \in H^+_{d_1}$ to $[(b_2,b_2')] \in H^+_{d_2}$ such that $\exists c \in X_1$ with $f(b_1,c) = b_2$ and $f(b_1',c) = b_2'$. Then $\iota_{d_2}\circ \iota_{d_1}^{-1}(x_1) = x_2$ is definable over $d_1d_2$ via $\exists y_1,y_2 \in Q_{d_1}, \exists y_3,y_4 \in Q_{d_2}, \ \exists c \in X_1, \  \pi_{\text{int}}(y_2,y_1) = x_1 \wedge \pi_{\text{int}}(y_4,y_3) = x_2 \wedge f(y_2,c) = y_4 \wedge f(y_1,c) = y_3$.

        Let $\phi(x)$ isolate $\tp(d_0/A)$ as in the proof of Lemma \ref{lem: isol fund sys} and $D$ be the set of realizations. Note that each $d$ is a concatenation of $d_0$ and an element $c \in X_1$. Consider the $A$-definable set $x_0 \in D  \wedge  z \in X_1 \wedge x \in H_{x_0z}^+$ i.e. the parameterized family of $H^+_d$. Define a relation via: $(x_0,z,x) \equiv (x_0',z',x')$ if $\iota_{x_0'z'}\circ \iota_{x_0z}^{-1}(x) = x'$. It is clear that this identifies distinct elements of the different $H_d^+$ which correspond to the same elementary permutation of $\aut(Z/\dcl(A,X))$ from which we see that it is an $A$-definable equivalence relation. In fact it is the equivalence relation whose classes are fibers of the piecewise map $\sqcup_d\iota_d^{-1}:\sqcup H_{d}^{+} \to \aut(Z/\dcl(A,X))$. Let $H^+$ be the quotient. It is clear that $H^+$ acts on $Z$ on the right $A$-definably isomorphically to the action of $\aut(Z/\dcl(A,X))$ on $Z$.

        \item That $O$ is contained in $\dcl(A,X)$ is because the action of $H^+$ is the action of $\aut(Z/\dcl(A,X))$ on $Z$. The orbits (equivalently, the fibers of $\pi:Z \to O$) are exactly the sets $Q_d$.

        \item This is a definition. 

        \item Define $h_{\text{ext}}:Q_d^2 \to H_d$ via $(b,b') \mapsto c$ such that $f(b,c) = b'$. For $b \in Q_d$, define $\rho_b:\aut(Q_d/\dcl(A,X)) \to H_d$ via $\sigma \mapsto c_\sigma \in H_d$ such that $f(b,c_\sigma) = \sigma(b)$. It is clear that $\rho_b$ is a bijection and that the identity automorphism gets sent to an element in $\dcl(A,d)$. The formula $\exists y \in Q_d f(f(y,x_1),x_2) = f(y,x_3)$ defines the induced group operation on $H_d$ and the action of $H_d$ on the right on $Q_d$ is given by $f$. Let $\sigma \in \aut(Q_d/\dcl(A,X))$, $c \in H_d$, and $b \in Q_d$. The equality $\sigma(f(b,c)) = f(\sigma(f),c)$ shows the compatibility of the biprincipal homogeneous space as the action of $H^+$ is via elementary permutation.

        \item By Theorem \ref{thm: cohomological fact} part 1, as $(Q_d,H_d)$ is a $\dcl(A,d)$ definable torsor, for $b \in Q_d$, $\dcl(A,b)$ is a definable Galois extension of $\dcl(A,d)$ if  $H_d(\dcl(A,b)) = H_d(\dcl(A,d))$. But this follows: Let $g(b) = c \in H_d(\dcl(A,b))$. Let $\sigma \in \aut(\bar{M}/\dcl(A,d))$, then $\sigma$ fixes $\tp(b/\dcl(A,X))$ which we called $Q_d$. As $g(y) = \sigma(c) \in \tp(b/\dcl(A,X))$ so $\sigma(c) = c$, so $c \in \dcl(A,d)$.
        
        Note that if $d \in O(A)$ then $\exists b \in Q_d(M)$ (See Remark \ref{rmk: fund in M}) and then $\dcl(A,b)$ is a definable Galois extension of $A$.
        \end{enumerate}
\end{proof}

\section{Multiplicity of definable Galois extensions for internality data}\label{sec: mult}

In this section we prove an analogue of the following result in the totally transcendental setting.

\begin{theorem}\cite{DeligneMilne2022} Let $Y$ be an ordinary homogeneous linear differential equation over a differential field $K$. Assume that there is at least one nontrivial Picard-Vessiot extension $L/K$ for $K$ in $K^{\diff}$, a differential closure of $K$. Let $H(C_{K^{\diff}})$, a linear algebraic group defined over $C_K$, be the (extrinsic) Picard-Vessiot differential Galois group of the extension $L/K$. Then the usual algebraic Galois cohomology $$\HH^1_{\alg}(C_{K^{\diff}}/C_K,H(C_K))$$
classifies (counts) exactly the set of distinct Picard-Vessiot extensions for $Y$ in $K^{\diff}$ with the base point of the pointed cohomology set corresponding to $L/K$.
\end{theorem}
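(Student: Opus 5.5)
The plan is to recover this statement as the $\DCF_0$ specialization of the general multiplicity theorem announced in the introduction. Take $A = K$ ($\dcl$-closed), $M = K^{\diff}$, $Y$ the solution set of the linear equation, and $X$ the constants $C$. By the example at the end of Section \ref{sec: internality}, after replacing fundamental systems by fundamental matrices, $Z$ is the set of fundamental matrices and $X_1 = GL_n(C)$, acting on the right by matrix multiplication. So assumption (2) holds. By Hilbert 90 for $GL_n$, $\HH^1_{\alg}(C_K, GL_n)$ is trivial; since the constants are stably embedded and purely an algebraically closed field, $\HH^1_{\defin}(K^{\diff}/K, GL_n(C))$ agrees with this algebraic cohomology, so assumption (3) holds. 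Assumption (1) is the hypothesis that a nontrivial Picard-Vessiot extension $L = \dcl(K,b_0)$ exists: $C_L = C_K$ is exactly the condition $\dcl(K,b_0)\cap \dcl(K,C) = K$ (Fact \ref{weak orth fact}). Thus $L/K$ is a definable Galois extension relative to $C$, with extrinsic group $G_0 = H_d$ from Proposition \ref{prop: binding groupoid}, where $d = \pi(b_0)\in O(K)$.

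The argument then runs as in the general case, so I would prove it directly. Let $Q = Q_d$ be the orbit of $b_0$; it is a $K$-definable right torsor for $H = H_d \le GL_n(C)$. Definable Galois extensions for $(Y,C)$ are generated by fundamental matrices $b$ with $\pi(b) = d' \in O(K)$. Such $b$ exist in $M$ for every $d' \in O(K)$ by Remark \ref{rmk: fund in M}, and $\dcl(K,b)$ depends only on $d'$ by strong internality. So the extensions correspond to the points of $O(K)$. Since $O = Z/H^+$ and $Z \cong GL_n$, one can identify $O(K)$ with $(GL_n/H)(K)$, the $K$-definable left cosets of $H$; this uses the biprincipal structure of Proposition \ref{prop: binding groupoid}(6) to pass between left $H^+$-orbits and right $H$-cosets. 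Theorem \ref{thm: LES} gives the exact sequence
\[
GL_n(C)(K) \to (GL_n/H)(K) \xrightarrow{\delta} \HH^1_{\defin}(K,H) \to \HH^1_{\defin}(K,GL_n) = 1,
\]
so $\delta$ is surjective. Its fibres are the $GL_n(C_K)$-orbits on cosets. Two cosets in one orbit correspond to $b, bg$ with $g \in GL_n(C_K)$, which generate the same field. Conversely, the same field means $b' = bg$ with $g \in \dcl(K,b)\cap C = C_K$. Hence the distinct Picard-Vessiot extensions are in bijection with $\HH^1_{\defin}(K,H)$, with the base point $\delta(d)$ corresponding to $L$.

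It remains to translate $\HH^1_{\defin}(K,H)$ into $\HH^1_{\alg}(C_{K^{\diff}}/C_K, H(C_K))$. Since $C_K = \dcl(K)\cap C$ and $C$ is stably embedded with induced structure a pure algebraically closed field, $H$ is, up to $K$-definable isomorphism, a linear algebraic group over $C_K$; the extrinsic group is in fact defined over $C_K$ here because $d\in O(K)$. Definable cocycles from $\aut(K^{\diff}/K)$ factor through the restriction to $C_{K^{\diff}} = C_K^{\alg}$ by weak orthogonality, and the resulting cocycles are continuous, i.e. algebraic, Galois cocycles of $C_K$. This translation is the step I expect to be the main obstacle, in particular checking surjectivity of $\aut(K^{\diff}/K)\to \aut(C_K^{\alg}/C_K)$ and injectivity on classes. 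For these I would use the short exact sequence of Theorem \ref{thm: SES} with $B = \dcl(K, C(M))$, which is normal, together with the triviality of $\HH^1_{\defin}(M/B, H)$ coming from Theorem \ref{thm: cohomological fact}(1) applied over $B$.
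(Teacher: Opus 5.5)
Your outline is the route of the paper's Section~7 theorem: parametrize the extensions by $O(K)$ via the binding groupoid, identify $O(K)$ with the $K$-definable left cosets of $H$ in $GL_n(C)$, and read the count off the exact sequence. The statement itself is only cited from Deligne--Milne. The identification works via $d'\mapsto H_{d',d}$, not via a $K$-definable $Z\cong GL_n$, which does not exist.

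The genuine gap is your verification of assumption (3). Stable embeddedness of $C$ only controls $K$-definable torsors that live inside the constants, and a $K$-definable $GL_n(C)$-torsor need not. $Z$ itself is one, and under your hypothesis $Z(K)=\emptyset$: if $b\in Z(K)$, then $b^{-1}b_0\in GL_n(C)\cap GL_n(L)=GL_n(C_K)$, so $b_0\in GL_n(K)$ and $L=K$. Hence $\HH^1_{\defin}(K^{\diff}/K,GL_n(C))\neq 1$. The same argument shows that assumptions (1) and (2) always force $Z(A)=\emptyset$, so (3), read as definable cohomology over $A$, can never hold alongside them. Consequently $\delta$ is not onto $\HH^1_{\defin}(K,H)$: the class of the orbit $Q_d$ of $b_0$ is nontrivial and maps to $[Z]\neq 1$. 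Your final translation fails for the same reason, and $\HH^1_{\defin}(K,H)$ is strictly larger than $\HH^1_{\alg}(C_K,H)$. The definable cocycle $\sigma\mapsto b_0^{-1}\sigma(b_0)$ does not factor through $\aut(C_K^{\alg}/C_K)$. Also $\HH^1_{\defin}(M/B,H)\neq 1$ for $B=\dcl(K,C(M))$, because $Q_d(B)=\emptyset$.

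The repair is to stop at the image of $\delta$. Your first steps actually give a bijection with $(GL_n/H)(K)/GL_n(C_K)$, and here stable embeddedness does apply. $H$ is $C_K$-definable, and the $K$-definable cosets of $H$ in $GL_n(C)$ are exactly the points of $(GL_n/H)(C_K)$ computed in the pure field $C$. Hilbert 90 then enters through the exact sequence of \emph{algebraic} Galois cohomology over $C_K$,
\[
GL_n(C_K)\to (GL_n/H)(C_K)\to\HH^1_{\alg}(C_K,H)\to\HH^1_{\alg}(C_K,GL_n)=1 .
\]
This identifies $(GL_n/H)(C_K)/GL_n(C_K)$ with $\HH^1_{\alg}(C_K,H)$, with the identity coset (that is, $d$, that is, $L$) going to the base point. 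Equivalently, the set being counted is $\ker(\HH^1_{\defin}(K,H)\to\HH^1_{\defin}(K,GL_n(C)))$, the classes of torsors living in the constants.

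A second correction is needed when $C_K\neq C_K^{\alg}$: the field $\dcl(K,b)$ is not determined by $\pi(b)$. For $c\in H(C_K^{\alg})\setminus H(C_K)$, $b_0$ and $b_0c$ lie in the same fibre $Q_d$ but generate different subfields of $K^{\diff}$. Only the isomorphism type over $K$ is determined, since $\pi(y)=\pi(b)$ isolates the type of $b$ over $K$. So the bijection is with $K$-isomorphism classes of Picard--Vessiot extensions, which is the form in which the Deligne--Milne count holds. Literally distinct subfields can be far more numerous: for $y'=y$ over $K=\mathbb{R}(x)$, the fields $K(ce^x)$ with $c\in\mathbb{C}^*/\mathbb{R}^*$ are pairwise distinct, while $\HH^1_{\alg}(\mathbb{R},\mathbb{G}_m)=1$.
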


Let $(Y,X)$ be fixed internality data. Construct $X_1$ and $Z$ as in the construction of the binding groupoid. We now focus on the case where $X_1$ is a $A$-definable group and $Z$ is a $A$-definable right PHS for $X_1$.

\begin{theorem}
    Let $Z$ be a right $A$-definable PHS for an $A$-definable group $X_1$. 
    \begin{enumerate}
        \item $E_{\text{int}}$ and $E_{\text{ext}}$ are then $A$-definable equivalence relations on $Z^2$. 
        \item $Z^2/E_{\text{int}}$ is an $A$-definable group which acts on the left on $Z$ as the set of all permutations commuting with the right action of $X_1$. It has $H^+$ as the $A$-definable subgroup of elementary permutations commuting with the right action of $X_1$.
        
        \item Assume there exists at least one $b_0 \in Z(M)$ with $\pi(b_0) = d_0 \in O(A)$ and extrinsic group $H_{d_0}$. Then the set of definable Galois  extensions in $M$ generated by elements of $Z(M)$ embeds in $\HH^{1}_{\defin}(M/A,H_{d_0}(M))$. If the cohomology $\HH^{1}_{\defin}(M/A,X_1(M))$ is trivial then this embedding is an isomorphism. This extends Proposition 3.2.19 of \cite{MeretzkyThesis}.
    \end{enumerate}
\end{theorem}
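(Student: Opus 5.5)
Parts (1) and (2) follow from the PHS structure, and part (3) from the torsor correspondence between orbits $Q_d$ and $H_{d_0}$-torsors, together with the long exact sequence (Theorem \ref{thm: LES}).

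For (1) and (2), write $b\cdot c$ for $f(b,c)$ and $h(b,b')$ for the unique $c$ with $b\cdot c=b'$. Then $(b_1,b_1')E_{\text{int}}(b_2,b_2')$ iff $h(b_1,b_2)=h(b_1',b_2')$, and $(b_1,b_1')E_{\text{ext}}(b_2,b_2')$ iff $h(b_1,b_1')=h(b_2,b_2')$. Both are pullbacks of equality along $A$-definable maps, so they are $A$-definable equivalence relations on all of $Z^2$ without restricting to a $Q_d$.

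Every permutation of $Z$ commuting with the right $X_1$-action is determined by the image of a single point. Conversely, each pair $(b',b)$ defines such a permutation $\tau_{(b',b)}\colon b\cdot c\mapsto b'\cdot c$. Two pairs define the same permutation iff they are $E_{\text{int}}$-equivalent. The group law on $Z^2/E_{\text{int}}$ is the formula of Proposition \ref{prop: binding groupoid}(2), now holding on all of $Z$, so this quotient is an $A$-definable group, isomorphic to $X_1^{op}$ after fixing a base point.

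Elementary permutations fixing $\dcl(A,X)$ commute with the right $X_1$-action, because $X_1\subseteq\dcl(A,X)$ and $f$ is $A$-definable. By Proposition \ref{prop: binding groupoid}(3) these form $H^+$. Its embedding into $Z^2/E_{\text{int}}$ is given by $\sigma\mapsto[(\sigma(b),b)]$. The image is $A$-definable by the descent of Proposition \ref{prop: binding groupoid}(3), and it is a subgroup.

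For (3), fix $b_0$ and $d_0\in O(A)$ as in the statement. Each $d\in O(A)$ gives an $A$-definable set $Q_d=\pi^{-1}(d)$. The map $b\mapsto h(b_0,b)$ identifies $Q_d$ with the set $H_{d_0d}\subseteq X_1$. This set is a right $H_{d_0}$-torsor via right multiplication, equivalently via $f$ on $Q_d$. Moreover it is an $A$-definable left coset $h(b_0,b)H_{d_0}$ of $H_{d_0}$ in $X_1$, since $H_{d_0d}=cH_{d_0}$ for any $c\in H_{d_0d}$, and this coset is $A$-definable because $d_0,d\in O(A)$.

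Every definable Galois extension generated by some $b\in Z(M)$ has $\pi(b)\in A$. This is because $\pi(b)\in\dcl(A,b)\cap\dcl(A,X)=A$, using weak orthogonality (Fact \ref{weak orth fact}). So such extensions are indexed by $d\in O(A)$ realized in $M$. Two elements $b,b'$ generate the same extension iff they lie in the same $Q_d$, since $Q_d$ is a single $\aut(M/A)$-orbit and $\dcl(A,b)=\dcl(A,b')$ for $b,b'\in Q_d$.

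Thus the data are the $A$-definable left cosets in $(X_1/H_{d_0})(A)$ arising from points of $M$. The connecting map $\delta$ of Theorem \ref{thm: LES} sends them into $\HH^1_{\defin}(A,H_{d_0})=\HH^1_{\defin}(M/A,H_{d_0}(M))$, and $d_0$ goes to the base point. I will need to check injectivity. The fibers of $\delta$ are the orbits of $X_1(A)$ acting on the left of $(X_1/H_{d_0})(A)$. Left translation by $g\in X_1(A)$ sends $H_{d_0d}$ to $H_{d_0d'}$ where $Q_{d'}=\{b_0\cdot gc\}$. I expect the quotient by $X_1(A)$ to correspond to the redundancy of re-choosing the base point.

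The main obstacle is to make this identification precise. I would try to show that $X_1(A)$-translates of $Q_d$ correspond to the same orbit of $\aut(M/A)$ only after recentering at $b_0$. Otherwise I would restate the embedding at the level of classes of torsors. Concretely, I would assign to $Q_d$ the class of the $A$-definable $H_{d_0}$-torsor $Q_d$ itself. If $Q_d\cong Q_{d'}$ as $A$-definable torsors, the isomorphism is given by left multiplication by some element of $X_1$ commuting with the action, that is, by an element of $Z^2/E_{\text{int}}$ defined over $A$. I would then need to argue, using part (2), that this forces $d=d'$ or identifies the two extensions.

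For surjectivity when $\HH^1_{\defin}(M/A,X_1(M))=1$, apply exactness in Theorem \ref{thm: LES}. Every class in $\HH^1_{\defin}(A,H_{d_0})$ then comes from an $A$-definable coset $cH_{d_0}$. By Fact \ref{fact: mcl}-style atomicity, $c$ can be taken in $M$, and $b_0\cdot cH_{d_0}$ is then a $Q_d$ with $d\in O(A)$ and a point in $M$. By Proposition \ref{prop: binding groupoid}(7), such a point generates a definable Galois extension.
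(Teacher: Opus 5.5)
Parts (1) and (2) are fine. Your skeleton for (3) matches the paper's: attach to $\dcl(A,b)$, with $d=\pi(b)\in O(A)$, an $A$-definable coset of $H_{d_0}$, and use exactness of the long exact sequence for surjectivity. But the injectivity step, which you flag as the main obstacle, is left open, and the bookkeeping it rests on is false. The claim ``$b,b'$ generate the same extension iff they lie in the same $Q_d$'' fails in both directions.
If $g\in X_1(A)\setminus H_d$, then $b$ and $f(b,g)$ generate the same extension but lie in different $Q$'s.
Conversely, for $b,b'\in Q_d(M)$, the equality $\dcl(A,b)=\dcl(A,b')$ forces $h(b,b')\in\dcl(A,b)\cap X_1=X_1(A)$. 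This fails as soon as $H_d(M)\neq H_d(A)$. For example, for $y''=-y$ over $K=\mathbb{R}(t)$, the systems $b_0=(\cos t,\sin t)$ and $f(b_0,R)$ with $R\in\mathrm{SO}_2(\mathbb{C})\setminus\mathrm{SO}_2(\mathbb{R})$ both lie in $Q_{d_0}$ but generate different subfields of $K^{\diff}$.
So a map that is constant on each $Q_d$ (yours, and the paper's) can only be injective on extensions up to $\aut(M/A)$-conjugacy. These conjugacy classes are exactly the orbits of the right action $d\cdot g:=\pi(f(b,g))$ of $X_1(A)$ on $O(A)$. Indeed, if $\sigma(\dcl(A,b_1))=\dcl(A,b_2)$ with $\sigma\in\aut(M/A)$, then $\sigma(b_1)=f(b_2,g)$ with $g\in X_1(A)$ by weak orthogonality, so $d_1=d_2\cdot g$. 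Conversely, if $d_2=d_1\cdot g$, then $f(b_1,g)$ and $b_2$ realize the same isolated type $\pi(y)=d_2$ over $A$, and $\dcl(A,f(b_1,g))=\dcl(A,b_1)$. In particular your fallback ``this forces $d=d'$'' cannot work.

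What remains is to match this action with the fibres of $\delta$. You correctly say these fibres are the left $X_1(A)$-orbits on $(X_1/H_{d_0})(A)$: an $A$-definable equivariant bijection $\phi\colon gH_{d_0}\to g'H_{d_0}$ is $x\mapsto kx$ with $k=\phi(x)x^{-1}$ independent of $x$, hence $k\in X_1(A)$.
Here your sides are wrong. With the paper's definition, $H_{d_0d}=\{c: f(Q_{d_0},c)\subseteq Q_d\}$ equals $H_{d_0}c=cH_d$ for any $c$ in it. It is a left $H_{d_0}$-torsor and a right $H_d$-torsor, not $cH_{d_0}$. Likewise $Q_d$ with $f$ is an $H_d$-torsor. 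So neither is a right $H_{d_0}$-torsor as you claim.
Use instead $H_{dd_0}=\{h(b,b_0):b\in Q_d\}$, as the paper does. For $d=d_0\cdot c$ it equals $c^{-1}H_{d_0}$, so $d\mapsto H_{dd_0}$ is a bijection $O(A)\to(X_1/H_{d_0})(A)$. Under it, left translation by $k\in X_1(A)$ becomes $d\mapsto d\cdot k^{-1}$, i.e.\ replacing $b$ by $f(b,k^{-1})$, which does not change $\dcl(A,b)$. Combined with the previous paragraph, this gives injectivity on conjugacy classes. Note that the paper's sentence labelled ``injectivity'' only checks the converse direction, well-definedness.
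Similarly, in your surjectivity step an $A$-definable coset $cH_{d_0}$ is realized by $Q_{d_1}\ni f(b_0,c^{-1})$. It is not realized by $b_0\cdot cH_{d_0}$, which is not an $H^+$-orbit unless $c$ normalizes $H_{d_0}$.
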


\begin{proof}
    \begin{enumerate}
        \item Reflexivity of $E_{\text{int}}$: Let $e \in X_1$ be the identity. Then $f(b_1,e) = b_1$ and $f(b_1',e) = b_1'$ for any $(b_1,b_1') \in Z^2$. Symmetry of $E_{\text{int}}$: Let $(b_1,b_1')$ be $E_{\text{int}}$-equivalent to $(b_2,b_2')$ witnessed by $c \in X_1$. Then $c^{-1} \in X_1$ witnesses that $(b_2,b_2')$ is $E_{\text{int}}$-equivalent to $(b_1,b_1')$. Transitivity of $E_{\text{int}}$: Let $(b_1,b_1')$ be $E_{\text{int}}$-equivalent to $(b_2,b_2')$ witnessed by $c_{12} \in X_1$. Let $(b_2,b_2')$ be $E_{\text{int}}$-equivalent to $(b_3,b_3')$ witnessed by $c_{23} \in X_1$. Then the product in $X_1$ of $c_{12}c_{23}$ witnesses that $(b_1,b_1')$ is $E_{\text{int}}$-equivalent to $(b_3,b_3')$.

        Reflexivity, symmetry, and transitivity of $E_{ext}$ are all immediate using the fact that $Z$ is a PHS for $X_1$.

        \item The definition of $E_{int}$ and an easy computation show that $Z^2/E_{int}$ determines uniquely a permutation of $Z$ commuting with the right action of $X_1$. It is clear that $H^+$ is just those permutations preserving type over $\dcl(A,X)$ and is defined by exactly the formula in the variable $x\in Z^2/E_{int}$ given as $\exists y_1,y_2 \in Z \  \exists d \in D \times X_1\ \pi_{int}(y_1,y_2) = x \wedge y_1 \equiv_d y_2$ where $y_1 \equiv_d y_2$ says $y_1$ and $y_2$ both satisfy same type over $d$, i.e. the both satisfy the formula given in Remark \ref{rmk: fund in M}. Recall that $D$ is defined in the proof of part 3) of Proposition \ref{prop: binding groupoid}.

        The induced group operation on $Z^2/E_{int}$ is $A$-definable: The product of two $E_{\text{int}}$-classes $[(b_1,b_1')]$ and $[(b_2,b_2')]$ is computed as follows. As $Z$ is a PHS for $X_1$ there is a unique $c \in X_1$ such that $f(b_2,c) = b_1'$. Then $[(b_2,b_2')] = [(b_1',f(b_2',c))]$. So $[(b_1,b_1')][(b_2,b_2')] = [(b_1,b_1')][(b_1',f(b_2',c))]$ we then define $[(b_1,b_1')][(b_1',f(b_2',c))] := [(b_1,f(b_2',c))]$.

        The action of $Z^2/E_{int}$ or $H^+$ on $Z$ is defined as before via $[(b,b')]b' := b$.

        %Let $[(b,b')]$ be an element of $H^+$. 
        
        %If $b'$ has the same type as $b$ over $\dcl(A,X)$, then by the stable embeddedness of $X$, there is a $\sigma \in \aut(Z/\dcl(A,X))$ such that $\sigma(b') = b$ in which case $[(b,b')]b' = \sigma(b') = b$ and the actions agree.
        
\item We adapt Proposition 3.2.19 of \cite{MeretzkyThesis}. 
    
Let $b_1 \in Z(M)$, such that $\dcl(A,b_1)$ is a definable Galois extension for the equation. By the last part of Proposition \ref{prop: binding groupoid}, $f(b_1) = d_1 \in O(A)$ and therefore $H_{d_1,d_0}(M)$ is an $A$-definable right PHS for $H_{d_0}(M)$ which gives the desired class. Recall that $H_{d_1,d_0}(M)$ is defined as $$\{c \in X_1(M) \ : \   
\exists b_1 \in Q_{d_1}(M), \  \exists b_0 \in Q_{d_0}(M), \  f(b_1,c) = b_0\}$$ 
It is clear that if $b_1,b_2 \in Z(M)$ generate the same definable Galois extension then there is a $c \in X_1(A)$ with $f(b_1,c) = b_2$. This yields an $A$-definable bijection between the right PHSs $H_{d_1,d_0}(M)$ and $H_{d_2,d_0}(M)$, and so the resulting cohomology classes are isomorphic, giving injectivity.

Note also that $H_{d_0,d_1}(M)$ is an $A$-definable left PHS for $H_{d_0}(M)$ so we could phrase everything in terms of left PHSs/cohomology if we wanted to.

For the moreover clause we need the long exact sequence in cohomology. Let $P(M)$ be a right $A$-definable PHS for $H_{d_0}(M)$. By the assumption that $\HH^1_{\alg}(M/A,X_1(M))$ is trivial, the class associated to $P(M)$ must be in the image of the connecting map $\delta^1$:
    \begin{align*}
        1  \to H_{d_0}(A) &\to X_1(A) \to (X_1/H_{d_0})(A) \xrightarrow{\delta^1} \\
        & \HH^1_{\alg}(M/A,H_{d_0}(M)) \to 1 \  (= \HH^1_{\alg}(M/A,X_1(M)))
    \end{align*}

    To see this, note first that the action of $H_{d_0}$ gives an $A$-definable equivalence relation on the $A$-definable group $X_1$. So by EI, the quotient is an $A$-definable set of left cosets $(X_1/H_{d_0})$. Note also the quotient map $\pi$ is a $A$-definable function. Given an $A$-point, $p \in (X_1/H_{d_0})(A)$, we have that $\dcl(A,p) = A$. The preimage $\pi^{-1}(p)$ is then an $A$-definable left coset of $H_{d_0}(M)$ in $X_1(M)$ which is then a right $A$-definable PHS for 
    $H_{d_0}(M)$. 
    
    So the assumption requiring the vanishing of the cohomology together with the long exact sequence implies that any definable right PHS for $H_{d}(M)$ must be a $A$-definable coset of $H_{d}(M)$ in $X_1(M)$. But this is exactly something of the form $H_{d_1,d_0}(M)$ because $Z(M)$ is a right PHS for $X_1(M)$: Take an element $c$ in this coset. Apply the inverse $c^{-1}$ to $b_0$ to get a new solution $b_1$ with $f(b_1) = d_1$. Then the coset is actually equal to $H_{d_1d_0}(M)$ as they intersect nontrivially. In the previous step we use that there is already one proper definable Galois extension, so that we may take such an element $b_0$.
    \end{enumerate}
\end{proof}

\begin{remark}\label{rmk: assmp ii}
In general there seems to be no reason why $X_1$ needs to carry a group structure making $(Z,X_1)$ into an $A$-definable PHS with action $f$. However this assumption is satisfied whenever the internality data we begin with $(Y,X)$ has the form of an $A$-definable PHS. 

Very often the internality data given is in the form of an equation on a group. By this, concretely we mean an equation coming from a $G$-primitive in the model-theoretic sense of Pillay-Sokolovic or $V$-primitive in the model-theoretic sense defined in \cite{MeretzkyThesis} as in Definition \ref{def: prim elt} above. Indeed, starting with a fixed extension, Theorem \ref{thm: cohomological fact} \cite{OmarDavid2026} tells us exactly that we can find a torsor which gives rise to the extension i.e. the extension has a generator which is a primitive element in this sense.  So in practice, this assumption is very often satisfied. In the differential algebraic setting of $T=\DCF_{0,m}$, such equations are the logarithmic differential equations of Kolchin on a (differential) algebraic groups \cite{OmarDavid2026}. %For a concrete example, in the Picard-Vessiot setting, where the internality data is exactly an homogeneous linear ordinary diferential equation, the set of fundamental systems $Z$ is right PHS for all of $GL_n(M) = X_1(M)$.
\end{remark}%(Aside: What if anything can be said when the internality data is assumed to be related to the covering map context of Scanlon?) 

\section{Colimit formulas and boundedness for differential fields}\label{sec: colim diff}

We begin by reviewing the basic relationships between algebraic algebraic Galois cohomology and Galois extensions. We will then survey, prove, and propose some analogous statements in the differential algebraic setting.  

\begin{fact}\label{fact: alg coh triv} \cite{Serre1979}
    Let $F$ be a perfect field. The following are equivalent:
    
    \begin{enumerate}
        \item $F = F^{\alg}$.
        \item For any finite algebraic group $G$ defined over $F$, $$\HH^1_{\alg}(F^{\alg}/F,G(F^{\alg})) = 1.$$
        \item For any algebraic group $G$ defined over $F$, $$\HH^1_{\alg}(F^{\alg}/F,G(F^{\alg})) = 1.$$
    \end{enumerate}
\end{fact}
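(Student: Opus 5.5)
The plan is to prove the cycle of implications (1)$\Rightarrow$(3)$\Rightarrow$(2)$\Rightarrow$(1), working throughout with the identification of $\HH^1_{\alg}(F^{\alg}/F,G(F^{\alg}))$ with $F$-isomorphism classes of $F$-torsors for $G$ that acquire an $F^{\alg}$-point. Since $F$ is perfect, $F^{\alg}$ is the separable closure. So every torsor for an algebraic group (a variety) has an $F^{\alg}$-point, and this is the usual $\HH^1_{\alg}(F,G)$.

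(1)$\Rightarrow$(3). If $F=F^{\alg}$, then $\aut(F^{\alg}/F)$ is trivial. The only cocycle is the constant map to the identity, so the cohomology set is a single point. Equivalently, every $F$-torsor has an $F$-point by the Nullstellensatz and is therefore trivial.

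(3)$\Rightarrow$(2). This is immediate, since a finite algebraic group is in particular an algebraic group.

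(2)$\Rightarrow$(1). This is the only step with content. Suppose $F\neq F^{\alg}$. By perfectness there is a nontrivial finite Galois extension $E/F$; write $\Gamma=\operatorname{Gal}(E/F)$, a nontrivial finite group. Take $G$ to be the constant finite group scheme $\Gamma_F$, with trivial Galois action on its points. Because the action is trivial, a continuous cocycle $\aut(F^{\alg}/F)\to\Gamma$ is just a continuous homomorphism. Two such homomorphisms are cohomologous exactly when they are conjugate by an element of $\Gamma$. The restriction map $\rho:\aut(F^{\alg}/F)\to\Gamma$ is continuous and surjective, so it is not conjugate to the trivial homomorphism. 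Hence $\HH^1_{\alg}(F^{\alg}/F,\Gamma)\neq 1$, contradicting (2). In torsor language, $\operatorname{Spec}E$ with its natural $\Gamma$-action is a nontrivial $\Gamma_F$-torsor, since it has no $F$-point.

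The only obstacle I expect is bookkeeping: checking that nonabelian $\HH^1$ with trivial action really is homomorphisms modulo conjugation. The cocycle condition $\varphi(\sigma\tau)=\varphi(\sigma)\varphi(\tau)$ and the coboundary relation $\varphi(\sigma)=g^{-1}\psi(\sigma)g$ give this directly. A smaller point is that perfectness is what guarantees a separable, hence Galois, proper extension whenever $F$ is not algebraically closed. Nothing from the model-theoretic parts of the paper is needed; this is the classical statement from \cite{Serre1979}.
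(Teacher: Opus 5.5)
Your proof is correct. The paper gives no proof of its own for this Fact; it simply cites \cite{Serre1979}, and your cycle (1)$\Rightarrow$(3)$\Rightarrow$(2)$\Rightarrow$(1) is the standard argument behind that citation. The only step with content is (2)$\Rightarrow$(1), and you handle it correctly: the constant finite group $\operatorname{Gal}(E/F)$ together with the surjective restriction homomorphism gives a nontrivial class. Equivalently, the torsor $\operatorname{Spec}E$ has no $F$-point.
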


We next recall the usual colimit formula for Galois cohomology. 

\begin{fact}\label{fact: alg colim}\cite{Serre1979}
    Let $F$ be a perfect field and $G$ be an algebraic group defined over $F$. Then $$\HH^1_{\alg}(F^{\alg}/F,G(F^{\alg})) = \varinjlim \HH^1_{\alg}(E/F,G(E))$$ where the limit is taken over the directed system of finite Galois extensions $E$ of $F$.
\end{fact}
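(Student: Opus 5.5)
The statement immediately above is Fact \ref{fact: alg colim}: for a perfect field $F$ and an algebraic group $G$ over $F$, $\HH^1_{\alg}(F^{\alg}/F,G(F^{\alg})) = \varinjlim \HH^1_{\alg}(E/F,G(E))$, with $E$ ranging over finite Galois extensions. I would prove it by the same scheme as Lemma \ref{lem: colim}, with continuity of cocycles for the profinite topology playing the role that definability plays there. Since $F$ is perfect, $F^{\alg}$ is the separable closure, so $\mathrm{Gal}(F^{\alg}/F) = \varprojlim \mathrm{Gal}(E/F)$ as $E$ runs over finite Galois subextensions. These subextensions form a directed system under inclusion whose union is $F^{\alg}$. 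The argument for $\aut(B/A)\cong\varprojlim\aut(B_i/A)$ in Lemma \ref{lem: colim} applies verbatim.

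\textbf{Transition and comparison maps.} For $E\subseteq E'$, inflation sends a cocycle $\varphi:\mathrm{Gal}(E/F)\to G(E)$ to $\varphi\circ\mathrm{res}$, viewed in $G(E')$. This respects cohomologous pairs, and composing inflations gives a directed system of pointed sets. Each inflation into $\mathrm{Gal}(F^{\alg}/F)$ yields a continuous cocycle, since it factors through the discrete finite quotient. This defines a compatible family of maps and hence a map $\varinjlim \HH^1_{\alg}(E/F,G(E)) \to \HH^1_{\alg}(F^{\alg}/F,G(F^{\alg}))$. These are the algebraic analogues of the maps supplied by Theorem \ref{thm: SES}.

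\textbf{Surjectivity.} This is the main step. Take a continuous cocycle $\varphi:\mathrm{Gal}(F^{\alg}/F)\to G(F^{\alg})$, with $G(F^{\alg})$ discrete. Continuity on the compact group means $\varphi$ takes finitely many values and is locally constant. So there is an open normal subgroup $N=\mathrm{Gal}(F^{\alg}/E_0)$ on whose cosets $\varphi$ is constant. The finitely many values lie in $G(E_1)$ for some finite extension $E_1$. Let $E$ be the Galois closure of $E_0E_1$; this is finite Galois. For $\tau\in \mathrm{Gal}(F^{\alg}/E)$ we have $\varphi(\tau)=\varphi(1)=1$. The cocycle identity then gives $\varphi(\sigma\tau)=\varphi(\sigma)\sigma(\varphi(\tau))=\varphi(\sigma)$, so $\varphi$ factors through $\mathrm{Gal}(E/F)$. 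Its values lie in $G(E_1)\subseteq G(E)$, so $\varphi$ is inflated from a cocycle for $E/F$. The point that needs care is that the values must lie in $G(E)$, not merely in $G(F^{\alg})$. This is why I enlarge $E_0$ by adjoining the coordinates of the finitely many values before taking the Galois closure.

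\textbf{Injectivity.} Suppose classes over $E$ and $E'$ become cohomologous in $\HH^1_{\alg}(F^{\alg}/F,G(F^{\alg}))$. This is witnessed by a single $g\in G(F^{\alg})$ with $\varphi(\sigma)=g^{-1}\psi(\sigma)\sigma(g)$ for all $\sigma$. Choose a finite Galois $E''$ containing $E$, $E'$ and the coordinates of $g$. The same identity, read through restriction to $\mathrm{Gal}(E''/F)$, shows the two inflated classes agree in $\HH^1_{\alg}(E''/F,G(E''))$, so they are equal in the colimit. This mirrors the injectivity argument of Lemma \ref{lem: colim}.
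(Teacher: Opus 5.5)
Your proof is correct: the paper gives no proof of this fact and only cites Serre, and your argument is the standard one there ($\HH^1$ of a profinite group with discrete coefficients is the colimit of $\HH^1(\Gamma/U,A^U)$ over open normal $U$), following the same scheme as the paper's Lemma \ref{lem: colim} with continuity playing the role of definability. As a minor remark, adjoining the coordinates of the values is harmless but unnecessary: if a cocycle factors through $\Gamma/U$ with $U$ normal, then for $\tau\in U$ you get $\tau(\varphi(\sigma))=\varphi(\tau)\tau(\varphi(\sigma))=\varphi(\tau\sigma)=\varphi(\sigma)$, so the values already lie in $G(E_0)$.
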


We finally recall the notion of boundedness of a field and some cohomological consequences.

\begin{fact}\cite{Serre1979}
    A perfect field $F$ is said to be bounded if any of the following equivalent conditions hold

    \begin{enumerate}
        \item $F$ has finitely many Galois extensions of each finite degree. 
        \item For any linear algebraic group $G$ over $F$, $\HH^1_{\alg}(F^{\alg}/F,G(F^{\alg}))$ is finite.
        \item For any linear algebraic group $G$ over $F$, there is a finite Galois extension $E$ of $F$ such that  $\HH^1_{\alg}(E/F,G(E))\cong \HH^1_{\alg}(F^{\alg}/F,G(F^{\alg}))$.
    \end{enumerate}
\end{fact}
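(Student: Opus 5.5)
The plan is to prove the cycle $(1)\Rightarrow(2)\Rightarrow(3)\Rightarrow(1)$. The two implications out of $(2)$ and $(3)$ are formal consequences of Fact \ref{fact: alg colim}. The implication $(1)\Rightarrow(2)$ is the substantive finiteness theorem of Serre for fields of ``type (F)'', and I will follow its structure.

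For $(2)\Rightarrow(3)$, let $G$ be a linear algebraic group over $F$. By $(2)$, $\HH^1_{\alg}(F^{\alg}/F,G(F^{\alg}))$ is a finite set $\{\xi_1,\dots,\xi_r\}$. By Fact \ref{fact: alg colim}, each $\xi_j$ lies in the image of $\HH^1_{\alg}(E_j/F,G(E_j))$ for some finite Galois extension $E_j$. Let $E$ be the Galois closure of the compositum $E_1\cdots E_r$. Each inflation map $\HH^1_{\alg}(E_j/F,G(E_j))\to\HH^1_{\alg}(E/F,G(E))$ is compatible with inflation to $F^{\alg}$. Moreover, inflation $\HH^1_{\alg}(E/F,G(E))\to \HH^1_{\alg}(F^{\alg}/F,G(F^{\alg}))$ is always injective, by the inflation--restriction sequence, or in the paper's language by Theorem \ref{thm: SES}. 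Together these give the required bijection.

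For $(3)\Rightarrow(1)$, fix $n$ and apply $(3)$ to the finite constant group $G=S_n$, which is linear. The set $\HH^1_{\alg}(F,S_n)$ classifies étale $F$-algebras of degree $n$. A class lies in the image of $\HH^1_{\alg}(E/F,S_n)$ exactly when the Galois group of $F^{\alg}/F$ acts through $\mathrm{Gal}(E/F)$, that is, when every field factor of the algebra embeds in $E$. Every Galois extension $L/F$ of degree $n$ gives such a class, via $L$ viewed as an étale algebra. By $(3)$, this class comes from $E$, so $L\subseteq E$. Since $E/F$ is finite Galois, it has only finitely many intermediate fields. Hence $F$ has finitely many Galois extensions of degree $n$.

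The main work is $(1)\Rightarrow(2)$. I would first treat finite $G$. A continuous cocycle of $\mathrm{Gal}(F^{\alg}/F)$ with values in the finite module $G(F^{\alg})$ is trivial on an open normal subgroup whose index is bounded in terms of $|G(F^{\alg})|$ and the finite field of definition of its points. By $(1)$ there are only finitely many such subgroups, and for each one the set of cocycles is finite, so $\HH^1$ is finite.

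For general linear $G$, I would reduce to the finite case by twisting. Using the exact sequence $1\to G^0\to G\to G/G^0\to 1$ and the twisting argument on fibers (Serre, I.5), it suffices to show finiteness of $\HH^1$ for connected linear groups and all their twists. Then I would use the fact that every class in $\HH^1(F,G^0)$ comes from the normalizer $N$ of a maximal torus $T$, and the sequence $1\to T\to N\to N/T\to 1$ with $N/T$ finite. This reduces matters to the finiteness of $\HH^1(F,T')$ for tori $T'$ split over a finite extension.

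For a torus split by a finite Galois extension $E$ of degree $m$, the group $\HH^1$ is killed by $m$. It injects into a group that can be controlled by the $m$-torsion and Kummer theory: one uses the sequence $1\to T[m]\to T\xrightarrow{m}T\to 1$ together with finiteness of $\HH^1(F,T[m])$ from the finite case.

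I expect this torus and normalizer step to be the main obstacle. There I would cite Serre's argument (Galois Cohomology, III.4.3) rather than reprove it, checking only that perfectness of $F$ suffices for the reductions used.
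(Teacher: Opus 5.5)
Your proposal is correct and follows the route the paper defers to; the paper gives no proof and only cites Serre. There, $(2)\Rightarrow(3)$ and $(3)\Rightarrow(1)$ are the formal consequences of the colimit formula and of applying $(3)$ to $S_n$, and $(1)\Rightarrow(2)$ is Serre's finiteness theorem for fields of type (F), proved along exactly your chain (finite groups, reduction to $G^0$ by twisting, normalizer of a maximal torus, tori).

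The one slip is in the torus step, where nothing injects. Since $m$ kills $\HH^1_{\alg}(F,T)$ and multiplication by $m$ is surjective on $T(F^{\alg})$, the map $\HH^1_{\alg}(F,T[m])\to \HH^1_{\alg}(F,T)$ is surjective, and that surjectivity is what gives finiteness.
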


We now put together some results in the literature which give differential algebraic analogues of the above, and propose notions of boundedness suitable for a differential field.

In the rest of this section $K$ is a differential field of characteristic $0$ with algebraically closed field of constants, $C_K = (C_K)^{\alg}$. We use the conventions of \cite{meretzky2026galoistheoryautomorphismgroups} for finite and infinite Picard-Vessiot (PV) extensions. Namely a PV extension $L$ of $K$ is an extension generated by fundamental systems (in $K^{\diff}$) of solutions to ordinary homogeneous linear differential equations over $K$. If $L$ is generated over $K$ as a differential field by the fundamental system of a single equation (it is equivalent to ask for finitely many) we will call $L$ a PV extension of finite type. 

We recall the definition of the sequence of Picard-Vessiot envelopes of a differential field $K$ and the Picard-Vessiot closure from \cite{meretzky2026galoistheoryautomorphismgroups} for example. Let $K^{PV_1}$ be the union of all Picard-Vessiot extensions of $K$. (It is enough to ask for them inside of $K^{\diff}$.) As is well known, see Example 3.33 of \cite{magid1994lectures},  $K^{PV_1}$ may itself not be closed in $K^{\diff}$, and the union of all PV extensions of $K^{PV_1}$ in $K^{\diff}$ we write as $K^{PV_2}$ and so forth. We define $K^{PV_\infty}=\cup_n K^{PV_n}$ which is itself $PV$ closed and is called the Picard Vessiot closure of $K$. As in  \cite{meretzky2026galoistheoryautomorphismgroups}, this yields a chain of normal differential subfields of the minimal closure of $K$, $$K \subseteq K^{PV_1} \subseteq K^{PV_2} \subseteq ... K^{PV_\infty}\subseteq \mcl(K)\subseteq K^{\diff}.$$

There are then at least three intermediate differential fields between $K$ and $K^{\diff}$ (or intermediate sets between $A$ and $M$) which each, in different senses, generalize the algebraic closure. Namely, $K^{PV_\infty}\subseteq \mcl(K)\subseteq K^{\diff}$. 

The following differential algebraic version of Fact \ref{fact: alg coh triv} was shown in both \cite{PILLAY2017809} and \cite{Minchenko_2019}: In fact, \cite{PILLAY2017809}  covers the generalized strongly normal theory, while \cite{Minchenko_2019} works in multiple commuting derivations in the linear case with an additional assumption.  

\begin{theorem}\label{thm: pv closed}\cite{PILLAY2017809}
    Let $K$ be a differential field. The following are equivalent:
    
    \begin{enumerate}
        \item $K$ is algebraically closed and PV-closed. 
        \item For any $G$ a linear algebraic group in the constants then $$H^1_{\delta}(K^{\diff}/K,G(K^{\diff})) = 1$$
        \item For any $G$ be a linear differential algebraic group over $K$ we have $$H^1_{\delta}(K^{\diff}/K,G(K^{\diff})) = 1$$
    \end{enumerate}
\end{theorem}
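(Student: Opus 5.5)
Theorem \ref{thm: pv closed} is cited from \cite{PILLAY2017809}, and I would prove it with the tools already set up in the paper: the torsor theorem (Theorem \ref{thm: cohomological fact}) and the long exact sequence (Theorem \ref{thm: LES}). The cycle of implications is (3)$\Rightarrow$(2)$\Rightarrow$(1)$\Rightarrow$(3).

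(3)$\Rightarrow$(2) is immediate, since a linear algebraic group in the constants is a linear differential algebraic group over $K$.

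For (2)$\Rightarrow$(1) I argue contrapositively, in two cases.
\begin{itemize}
\item Suppose $K$ is not algebraically closed. Take a finite Galois extension $E/K$ with Galois group $\Gamma$, and view $\Gamma$ as a finite constant group. The set of $K$-embeddings of $E$ into $K^{\diff}$ is a $K$-definable torsor for $\Gamma$ with no $K$-point. This gives a nontrivial class in $\HH^1_\delta(K^{\diff}/K,\Gamma)$.
\item Suppose $K$ has a proper PV extension $L=K\langle b\rangle$ of finite type. Its set of fundamental matrices $Q$ is a $K$-definable torsor for the Galois group $H(C)\le GL_n(C)$, by the binding groupoid construction (Proposition \ref{prop: binding groupoid}(6)). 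If $Q$ had a $K$-point $a$, then $b=a\cdot h$ with $h\in H(C_{K^{\diff}})$. Since $C_L=C_K$, the weak orthogonality of Fact \ref{weak orth fact} forces $h\in H(C_K)$, so $b\in K$, a contradiction. Hence $[Q]\neq 1$.
\end{itemize}

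(1)$\Rightarrow$(3) is the main step. Let $G$ be linear, say $G\le GL_n$ as a differential algebraic subgroup over $K$, and let $P$ be a $K$-definable torsor for $G$.
\begin{enumerate}
\item Show first that $\HH^1_\delta(K,GL_n)=1$. Since $K$ is algebraically closed, Hilbert 90 gives the algebraic input. The differential input is that every $GL_n$-torsor is, up to isomorphism, given by a logarithmic derivative equation $\partial y\, y^{-1}=A$. The solutions of such an equation generate a PV extension of finite type. Because $K$ is PV-closed, that extension is $K$ itself, so the equation already has a $K$-point.
\item Theorem \ref{thm: LES} for $G\le GL_n$ then shows that every class in $\HH^1_\delta(K,G)$ comes from a $K$-point of $GL_n/G$. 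Concretely, $P$ is a $K$-definable coset $aG$.
\item To show $P$ has a $K$-point, apply Theorem \ref{thm: cohomological fact}(1). Take $b\in P(K^{\diff})$. The extension $K\langle b\rangle$ has a definable Galois part relative to $G$ and the constants, and I would show that every such part is built from PV and algebraic extensions.
\end{enumerate}

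The main obstacle is the last point. It requires the structure theory of linear differential algebraic groups, namely Cassidy's result that $G$ has a normal series whose quotients are either unipotent or of the form $H(C)$ up to algebraic data, together with an induction on this series in which the torsor is reduced step by step. At each step the quotient torsor is either a unipotent one, which is split using the fact that torsors for the additive group are solved by integration (a PV extension), or one of constant type, which is split by PV-closedness. Nonabelian twisting when passing to subgroups is handled by the long exact sequence again. I expect the careful bookkeeping of this dévissage, and the need to stay over $K$ at each stage, to be the hardest part. This is where I would follow \cite{PILLAY2017809} closely.
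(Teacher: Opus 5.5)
The paper gives no proof of this theorem. It is quoted from \cite{PILLAY2017809}, so your argument has to stand on its own. Your (3)$\Rightarrow$(2) and (2)$\Rightarrow$(1) are fine. In the PV case you could even skip the binding groupoid: the set of all fundamental matrices of $y'=Ay$ is a $K$-definable $GL_n(C)$-torsor, and it has a $K$-point iff $y'=Ay$ has a fundamental matrix in $K$.

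The gap is in (1)$\Rightarrow$(3).

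\textbf{Step 1 conflates $GL_n$ with $GL_n(C)$.} The solution set of $\partial y\,y^{-1}=A$ is a torsor for $GL_n(C)$, not for $GL_n(K^{\diff})$. So your argument can at best give $H^1_\delta(K,GL_n(C))=1$, and even that goes through the long exact sequence for $GL_n(C)\le GL_n$. What step 2 needs, for an arbitrary $G\le GL_n$, is $H^1_\delta(K,GL_n)=1$ for the full algebraic group. That holds over every differential field: Kolchin identifies $H^1_\delta(K,H)$ with ordinary Galois cohomology for algebraic $H$, and then Hilbert 90 applies. It has nothing to do with PV-closedness.

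\textbf{Step 3 does not work as stated.} Theorem \ref{thm: cohomological fact}(1) only characterizes when $\dcl(K,b)$ is Galois relative to $G$; it never produces a $K$-point. Also, an arbitrary $b\in P(K^{\diff})$ need not generate anything assembled from PV and algebraic extensions. Suppose $K$ is algebraically closed and PV-closed but $K\neq K^{\diff}$. Take $P=G=\mathbb{G}_a$ and $b\in K^{\diff}\setminus K$. Then $K\langle b\rangle\neq K$, even though $P$ has a $K$-point.

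So the whole hard direction rests on the d\'evissage in your last paragraph, which is only named. To make it a proof you need three things.

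\begin{enumerate}
\item \emph{A $K$-definable series.} You need $G=G_0\trianglerighteq G_1\trianglerighteq\cdots\trianglerighteq G_k=1$ with each $G_{i+1}$ normal in $G_i$. Then $P/G_{i+1}$ is a right $G_i/G_{i+1}$-torsor, and its fibre over a $K$-point is a $K$-definable right $G_{i+1}$-torsor. No twisting is needed.

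\item \emph{A complete list of factor types with vanishing $H^1_\delta$ over $K$.}
\begin{itemize}
\item Finite groups and algebraic groups $H(K^{\diff})$: these follow from $\acl(K)=K$ and Kolchin's theorem.
\item Proper $\delta$-subgroups of vector groups, i.e.\ kernels of linear differential operators: their torsors are inhomogeneous linear equations, solved by PV-closedness.
\item Constant groups $H(C)$ with $H$ over $C_K$: these follow from the logarithmic derivative and PV-closedness.
\end{itemize}
Your phrase ``unipotent or $H(C)$ up to algebraic data'' leaves out the algebraic factors, such as $SL_2(K^{\diff})$ or $\mathbb{G}_m(K^{\diff})$.

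\item \emph{Handling forms of $H(C)$.} Cassidy's theorem presents a proper Zariski-dense $\delta$-subgroup $S$ of a simple $H$ as $gH(C)g^{-1}$ only with $g\in H(K^{\diff})$. So over $K$, such a factor is merely a form of $H(C)$. You must first show that the $K$-definable set $\{g : gH(C)g^{-1}=S\}$ has a $K$-point. This set is a torsor for the normalizer of $H(C)$, which is $H(C)$ itself for semisimple $H$, so PV-closedness applies again. Only then does the constant-type case take over.
\end{enumerate}

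Without these, ``follow \cite{PILLAY2017809} closely'' carries the entire content of (1)$\Rightarrow$(3).
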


\begin{remark}
    If $K = K^{\diff}$, then from the definition,  $H^1_{\delta}(K^{\diff}/K,G(K^{\diff})) = 1$ for any linear differential algebraic group $G$ over $K$. We show below in Proposition 9.9 (in the more general model theoretic context) that if $K=\mcl(K)$ then again $H^1_{\delta}(K^{\diff}/K,G(K^{\diff})) = 1$ for any linear differential algebraic group $G$ over $K$. However, the above Theorem \ref{thm: pv closed} together with abundant examples where the inclusions $K^{PV_\infty}\subseteq \mcl(K)\subseteq K^{\diff}$ are strict show that the converses are false.
\end{remark}

\begin{corollary}\label{cor: pv closed}\cite{AnandDavidComm}
    Let $K$ be a differential field and $G$ a linear algebraic group in the constants of $K$. Then $$H^1_{\delta}(K^{PV_\infty}/K,G(K^{PV_\infty})) \cong H^1_{\delta}(K^{\diff}/K,G(K^{\diff})).$$
\end{corollary}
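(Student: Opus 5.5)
Combine Theorem \ref{thm: pv closed} with the short exact sequence of Theorem \ref{thm: SES}, applied to the tower $K \subseteq K^{PV_\infty} \subseteq K^{\diff}$. Throughout, $K^{PV_\infty}$ is normal in $K^{\diff}$ over $K$, and the paper notes it lies inside $\mcl(K)$; $K^{\diff}$ is trivially normal. Setting $A = K$, $B = K^{PV_\infty}$, $C = K^{\diff}$ in Theorem \ref{thm: SES} gives an exact sequence of pointed sets
$$1 \to \HH^1_{\delta}(K^{PV_\infty}/K, G(K^{PV_\infty})) \to \HH^1_{\delta}(K^{\diff}/K, G(K^{\diff})) \to \HH^1_{\delta}(K^{\diff}/K^{PV_\infty}, G(K^{\diff})).$$
The first map is injective (inflation). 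It therefore suffices to show that the third term is trivial, because then exactness at the middle term makes the first map surjective.

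To kill the third term, apply Theorem \ref{thm: pv closed} to the differential field $L = K^{PV_\infty}$. The group $G$ is a linear algebraic group in the constants defined over $K \subseteq L$, so it is in particular a linear differential algebraic group over $L$. Also, $L^{\diff} = K^{\diff}$ (we can take $K^{\diff}$ as the differential closure of $L$, since $K^{\diff}$ is prime over any normal subset). So it remains to check condition (1) of Theorem \ref{thm: pv closed} for $L$: that $L$ is algebraically closed and PV-closed.

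\begin{itemize}
\item \emph{PV-closed.} By construction $K^{PV_\infty}$ is PV-closed. Any fundamental system of a linear equation over $L$ has coefficients in some $K^{PV_n}$, so it lies in $K^{PV_{n+1}}$.
\item \emph{Algebraically closed.} Every finite algebraic extension is contained in a Picard–Vessiot extension. Concretely, an algebraic element $a$ over a differential field $F$ satisfies a linear differential equation over $F$, namely the one annihilating the $F$-span of its conjugates, and its Galois closure is generated by a fundamental system. Hence $F^{\alg} \subseteq F^{PV}$, and so $K^{PV_\infty}$ is algebraically closed. We also need $C_L = C_K$ algebraically closed so that these extensions add no new constants; this holds since $C_K = (C_K)^{\alg}$ by the standing assumption of the section, and all constants of $K^{\diff}$ are then $C_K$.
\end{itemize}

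Theorem \ref{thm: pv closed}(2) then gives $\HH^1_{\delta}(K^{\diff}/L, G(K^{\diff})) = 1$, and the conclusion follows.

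The main obstacle is making Theorem \ref{thm: SES} apply with the correct base. That theorem requires the definable cohomology of the top layer to be computed over $B$, with $M$ still prime over $B$. Here $B = K^{PV_\infty}$ is normal, so $M$ remains prime over it; this must be checked rather than assumed. A second subtlety is the footnote's twisted action: triviality of the third term for $G$ itself suffices for exactness at the basepoint. Surjectivity is needed at every class, though, and for that the cleaner route is this: any $K$-definable torsor $P$ for $G$ acquires a $K^{\diff}$-point, and that point lies in $K^{PV_\infty}$ by the triviality over $L$ (all twists of $G$ are again linear differential algebraic groups over $L$). I would phrase the surjectivity argument this way, directly through torsors having an $L$-point, to avoid the twisting issue.
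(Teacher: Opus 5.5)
Your proposal is correct and follows the paper's argument: apply Theorem~\ref{thm: SES} to $K \subseteq K^{PV_\infty} \subseteq K^{\diff}$, then kill the third term with Theorem~\ref{thm: pv closed} for $L = K^{PV_\infty}$, which you rightly check is algebraically closed and PV-closed using the standing assumption $C_K = (C_K)^{\alg}$. Two small corrections: first, the twisting worry is unnecessary, because once $\HH^1_{\delta}(K^{\diff}/K^{PV_\infty},G(K^{\diff}))$ is a single point, exactness at the middle term already says every class of $\HH^1_{\delta}(K^{\diff}/K,G(K^{\diff}))$ comes from $K^{PV_\infty}$; second, the linear equation for an algebraic $a$ should annihilate the $C_F$-span (not the $F$-span) of its conjugates, or more simply it comes from the $F$-linear dependence of $a, a', a'', \dots$ in the finite-dimensional space $F(a)$.
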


\begin{proof}
    Immediate from Theorem \ref{thm: SES} applied to $K \subseteq K^{PV_\infty} \subseteq K^{\diff}$ and Theorem \ref{thm: pv closed}.
\end{proof}

In preparation to show some differential algebraic versions of the colimit formula of Fact \ref{fact: alg colim}, we make some remarks about various kinds of subfields of $K^{PV_\infty}$. We will comment on finitely differentially generated subfields, normal subfields, Picard-Vessiot extensions, and iterated Picard-Vessiot extensions. 

We contine to follow \cite{meretzky2026galoistheoryautomorphismgroups} for notation and cite results from \cite{magid2022completepicardvessiotclosure}. 

An iterated PV (IPV) extension of $K$ is a differential field extension $K \subseteq L\subseteq K^{PV_\infty}$, such that there exists an infinite sequence of proper Picard-Vessiot extensions $L_i/L_{i-1}$, 
not necessarily of finite type, such that $L$ is the union of that chain, or there exists a finite such chain $K = L_0 \subseteq L_1 \subseteq L_2 \subseteq \cdots L_n$ with $L_n = L$.  

Theorem 2 of \cite{magid2022completepicardvessiotclosure} says that $K \subseteq L \subseteq K^{PV_{\infty}}$ is finitely differentially generated if and only if $L$ is a differential subfield of an IPV extension which is the union of a finite chain where each $L_i/L_{i-1}$ is a PV extension of finite type. We will call IVP extensions of this form IVP extensions of finite type. 

Proposition 3.2 of \cite{meretzky2026galoistheoryautomorphismgroups}, shows that any normal extension, $K \subseteq L \subseteq K^{PV_\infty}$, is an IPV extension. 

Lemma 2 of \cite{magid2022completepicardvessiotclosure} says that the compositum of two (or finitely many) IPV  extensions (in our sense) is again an IPV extension. The proof shows that if the two extensions are of finite type the compositum will be of finite type.  So normal IPV extensions form a directed system whose union is $K^{PV_\infty}$. This statement is tautological because $K^{PV_\infty}$ is itself a normal IPV extension. We address this as follows:

Firstly, restricting to finitely differentially generated normal IPV extensions yields exactly the family of PV extensions of finite type: 

\begin{lemma}\label{lem: normal IPV is PV}
    Let $K\subseteq L\subseteq K^{PV_\infty}$. The following are equivalent: 
    \begin{enumerate}
        \item $L$ is normal and finitely differentially generated over $K$.
        \item $L$ is a normal IPV extension of finite type over $K$.
        \item $L$ is a PV extension of finite type over $K$.
    \end{enumerate} 
\end{lemma}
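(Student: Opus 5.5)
I will prove the cycle $(3)\Rightarrow(2)\Rightarrow(1)\Rightarrow(3)$, with the first two implications essentially formal.

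\emph{$(3)\Rightarrow(2)$.} A PV extension of finite type $L/K$ is the one-step chain $K=L_0\subseteq L_1=L$. So it is an IPV extension of finite type. It remains to see that $L$ is normal in $K^{\diff}$ over $K$. Say $L=K\langle b\rangle$ with $b$ a fundamental matrix for $\mathcal{L}(y)=0$. Any $K$-conjugate $b'$ of $b$ in $K^{\diff}$ is again a fundamental matrix for the same equation. Since $C_{K^{\diff}}=C_K$ (here we use $C_K=(C_K)^{\alg}$), we get $b'=bc$ with $c\in GL_n(C_K)$, so $b'\in L$. Hence every element of $K^{\diff}$ realizing $\tp(a/K)$ for some $a\in L$ lies in $L$, because $a$ is a differential rational function of $b$ and the conjugating automorphism (strong $\omega$-homogeneity) sends $b$ to a conjugate $b'$.

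\emph{$(2)\Rightarrow(1)$.} This is immediate. An IPV extension of finite type is a finite chain of extensions, each finitely differentially generated over the previous one, so $L$ is finitely differentially generated over $K$.

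\emph{$(1)\Rightarrow(3)$.} This is the main step. Suppose $L=K\langle a\rangle$ is normal. By the realization of normal subfields as IPV extensions (Proposition 3.2 of \cite{meretzky2026galoistheoryautomorphismgroups}) together with Theorem 2 of \cite{magid2022completepicardvessiotclosure}, $L$ sits inside $K^{PV_\infty}$ and is finitely generated. I would argue as follows.

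First, normality plus finite generation makes $L$ a definable Galois extension. Let $p=\tp(a/K)$. Since $L$ is normal, every realization of $p$ in $K^{\diff}$ lies in $L=\dcl(K,a)$. Because $L\subseteq K^{PV_\infty}\subseteq \mcl(K)$ and $C_{K^{\diff}}=C_K$, this is the PV-style situation, and the plan is to show $p$ is strongly internal and weakly orthogonal to the constants $\mathcal{C}$. Weak orthogonality is immediate from Fact \ref{weak orth fact}, since $\mathcal{C}(L)=C_K$. For strong internality: the orbit of $a$ under $\aut(L/K)$ is a set of realizations of an isolated type. So $\aut(L/K)$ is relatively definable, and I expect to realize it as the $\mathcal{C}$-points of the binding group of a constants-internal set by working inside some $K^{PV_n}$, where $a$ is built from PV generators.

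Second, once $L/K$ is a definable Galois extension relative to $\mathcal{C}$ with Galois group a definable subgroup of some $GL_n(\mathcal{C})$, I would apply Theorem \ref{thm: cohomological fact}. This makes $L$ generated by a primitive $(GL_n,H)$-type element over $K$, using $\HH^1_{\defin}(K,GL_n(\mathcal{C}))=1$ since $C_K$ is algebraically closed. Such a primitive satisfies a linear logarithmic-derivative equation $\partial(g)g^{-1}\in\mathfrak{gl}_n(K)$, so $L$ is a PV extension of finite type.

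The expected obstacle is the strong internality claim in the first part of $(1)\Rightarrow(3)$. The worry is that a normal finitely generated subfield of $K^{PV_2}$ could a priori be internal only to an iterated structure (e.g.\ Galois over $K^{PV_1}$) rather than to $\mathcal{C}$ over $K$. To handle this I would first try using normality to show that $\aut(L/K)$ acts on $L$ through $\aut(K^{PV_\infty}/K)$ with relatively definable image. I would then invoke Poizat's theorem that a group of definable automorphisms of a type in an $\omega$-stable theory, when the type is internal to $\mathcal{C}$, is definable in $\mathcal{C}$; here internality itself would come from the finite-type IPV tower, each step of which is $\mathcal{C}$-internal.
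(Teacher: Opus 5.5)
Your implications $(3)\Rightarrow(2)\Rightarrow(1)$ are fine. The normality argument via $b'=bc$ with $c\in GL_n(C_K)$ is correct, and so is weak orthogonality to $\mathcal{C}$. However, $(1)\Rightarrow(3)$ has a genuine gap exactly where you expect it. You never show that $\tp(a/K)$ is strongly internal to $\mathcal{C}$, i.e.\ that $L/K$ is strongly normal, and that is the whole content of the implication.

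The remedy you sketch does not work. A finite tower of $\mathcal{C}$-internal (PV) steps gives only $\mathcal{C}$-analysability, not internality. For example, take $K=C(x)$ with $x'=1$. Then $L=K(\log x,\log\log x)$ is an IPV extension of finite type with no new constants. Yet for any constant $c$, the pair $(\log x+c,\log(\log x+c))$ realizes the same type over $K$, and for $c\neq 0$ its second coordinate is not in $L\langle\mathcal{C}\rangle$ (by Ostrowski's theorem). Poizat's binding-group theorem presupposes internality and cannot produce it. What is missing is an argument that \emph{normality} collapses the tower to a single $\mathcal{C}$-internal step.

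The paper does not try to build internality out of the tower. It takes $L/K$ to be strongly normal from normality together with $C_K=(C_K)^{\alg}$. It then uses normality again, inside the proof of Magid's Theorem 2, to see that $L$ is exactly an IPV extension of finite type.

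Your second step has two further problems.

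First, you assume the Galois group embeds in $GL_n(\mathcal{C})$. A strongly normal extension can have a non-linear Galois group, e.g.\ an elliptic curve in the Weierstrass case. The paper gets linearity precisely from the IPV tower: its PV steps give a subnormal series of the Galois group with linear algebraic quotients, so the group is linear. It then cites Kolchin's theorem (Fact 1.20 of \cite{AnandDavidComm}) that a strongly normal extension with linear Galois group is PV of finite type.

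Second, $\HH^1_{\defin}(K,GL_n(\mathcal{C}))=1$ is false in general. The nonzero solutions of $y'=y$ over $C(x)$ form a $K$-definable torsor for $GL_1(\mathcal{C})$ with no $K$-point. Indeed, by Theorem \ref{thm: pv closed}, such vanishing for all linear groups in the constants characterizes algebraically closed PV-closed fields. Algebraic closedness of $C_K$ only kills $\HH^1_{\alg}(C_K,\cdot)$, which is a different object. Also, a $(GL_n(\mathcal{C}),H)$-primitive would be a constant matrix with zero logarithmic derivative, so it could not generate $L$.

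The version that works takes $G=GL_n$ as an algebraic group over $K$, whose definable cohomology is trivial by Hilbert 90 and Kolchin's comparison with algebraic cohomology. The Galois group is then viewed as a subgroup of $GL_n(\mathcal{C})\le GL_n$, and a $(GL_n,H)$-primitive $g$ satisfies $\partial(g)g^{-1}\in M_n(K)$. Once strong normality and linearity are established, this primitive-element route is a valid alternative to citing Kolchin.
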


\begin{proof}
    We show (1) implies (2) implies (3). The remaining (3) implies (1) is immediate.
    
    Let $L$ be normal and finitely differentially generated over $K$. As we assumed $C_K=C_K^{\alg}$, $L$ is then a strongly normal extension of $K$. 

    As $L$ is finitely differentially generated, Theorem 2 of \cite{magid2022completepicardvessiotclosure} says that $L$ is a differential subfield of an IPV extension of finite type. In the proof of Theorem 2 of \cite{magid2022completepicardvessiotclosure}, an additional assumption of normality yields that $L$ is exactly an IPV extension of $K$ of finite type.
    %As $L$ is finitely differentially generated and then by Proposition 3.2 of \cite{meretzky2026galoistheoryautomorphismgroups}, $L$ must be a subfield of an IVP extension of finite type. 
    But then the Galois group of $L$ must have a composition series of linear algebraic groups and so must be linear. 
    
    By Fact 1.20 of \cite{AnandDavidComm}, a strongly normal extension with a linear algebraic Galois group must be a PV extension of finite type.

\end{proof}

\begin{proposition}
    Let $G$ be a linear differential algebraic group defined over $K$. Then $$\varinjlim H^1_{\delta}(L/K,G(L)) \cong H^1_{\delta}(K^{PV_1}/K,G(K^{PV_1}))$$ where the limit is again taken over the family of normal, finitely differentially generated extensions $K \subseteq L\subseteq K^{PV_\infty}$ or equivalently, the family of Picard-Vessiot extensions of finite type. 
\end{proposition}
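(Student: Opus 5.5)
The plan is to apply Lemma \ref{lem: colim} with $A=K$, $M=K^{\diff}$, $B=K^{PV_1}$, and the family $\{B_i\}$ equal to the Picard-Vessiot extensions of finite type of $K$ inside $K^{\diff}$. By Lemma \ref{lem: normal IPV is PV}, this family is the same as the family of normal, finitely differentially generated subfields $K\subseteq L\subseteq K^{PV_\infty}$. That settles the ``equivalently'' clause, so from now on I work only with PV extensions of finite type. The subscript $\delta$ cohomology is the definable Galois cohomology of $\DCF_0$, and $\DCF_0$ is totally transcendental and eliminates imaginaries. So the general machinery of Section \ref{sec: Gal coho} applies with $G$ a $K$-definable group.

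There are three hypotheses of Lemma \ref{lem: colim} to check.

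\begin{enumerate}
\item $K^{PV_1}$ is normal in $K^{\diff}$ over $K$. Take $b\in K^{PV_1}$ and $b'\in K^{\diff}$ with $\tp(b'/K)=\tp(b/K)$. Then $b$ lies in some PV extension of finite type $L=K\langle \bar{b}\rangle$, with $\bar b$ a fundamental system for some $\mathcal{L}(y)=0$. By strong $\omega$-homogeneity of the prime model there is $\sigma\in\aut(K^{\diff}/K)$ with $\sigma(b)=b'$. Then $\sigma(\bar b)$ is again a fundamental system of $\mathcal{L}(y)=0$ whose constants are $C_K$, so $\sigma(L)$ is a PV extension of finite type. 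Hence $b'\in\sigma(L)\subseteq K^{PV_1}$. The same argument shows that each PV extension of finite type is normal.
\item The family is directed under inclusion. If $L_1,L_2$ are generated by fundamental systems of $\mathcal{L}_1,\mathcal{L}_2$, then their compositum is generated by fundamental systems of the least common left multiple of $\mathcal{L}_1$ and $\mathcal{L}_2$. Its constants remain $C_K$ because it sits inside $K^{\diff}$, whose constant field is $C_K=C_K^{\alg}$. So the compositum is again a PV extension of finite type. Alternatively, one can quote Lemma 2 of \cite{magid2022completepicardvessiotclosure} together with Lemma \ref{lem: normal IPV is PV}.
\item The union of the family is $K^{PV_1}$. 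This holds by definition: every element of a PV extension lies in one generated by a single equation, since finitely many equations can be combined into one.
\end{enumerate}

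With these in hand, Lemma \ref{lem: colim} gives $\varinjlim \aut(L/K)\cong\aut(K^{PV_1}/K)$ and the desired bijection of pointed sets $\varinjlim \HH^1_{\delta}(L/K,G(L))\cong \HH^1_{\delta}(K^{PV_1}/K,G(K^{PV_1}))$. The transition maps are inflation, which is injective by Theorem \ref{thm: SES}. Linearity of $G$ is not used beyond $G$ being $K$-definable.

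The only real point to watch is that Theorem \ref{thm: SES} and Lemma \ref{lem: colim} require the intermediate sets to be $\dcl$-closed and normal in $M=K^{\diff}$. In $\DCF_0$, $\dcl(K,b)$ is the differential field $K\langle b\rangle$, so differential subfields are $\dcl$-closed. Normality is item 1 above. With both in place, the rest is a direct citation.
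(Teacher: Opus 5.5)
Your proposal is correct and follows the paper's proof: identify the family with the finite-type PV extensions via Lemma \ref{lem: normal IPV is PV}, observe that it is directed with union $K^{PV_1}$, and apply Lemma \ref{lem: colim}. You spell out the normality, directedness and union checks that the paper leaves implicit; one small point is that normality of each individual finite-type $L$ needs $\sigma(L)=L$, not just $\sigma(L)\subseteq K^{PV_1}$, which holds because $\sigma(\bar b)$ lies in the $C_{K^{\diff}}=C_K$-span of $\bar b$ (or directly by Lemma \ref{lem: normal IPV is PV}).
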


\begin{proof}
    The normal, finitely differentially generated extensions $K \subseteq L\subseteq K^{PV_\infty}$ form a directed system under inclusion. By the previous Lemma \ref{lem: normal IPV is PV}, these are exactly the PV-extensions of finite type, so their direct limit is exactly $K^{PV_1}$. We then apply Lemma \ref{lem: colim}.
\end{proof}

Lemma \ref{lem: normal IPV is PV} above, verifies the remark (immediately following Definition 4) of \cite{magid2022completepicardvessiotclosure}, that an IPV extension of finite type which is not a PV extension of finite type cannot be normal. Therefore, to obtain a colimit formula for $K^{PV_{\infty}}$ this we must work with the set of normal closures of IPV extensions of $K$ of finite type:

\begin{proposition}\label{prop: colim ipv}
      Let $G$ be a linear differential algebraic group $G$ over $K$. Then $$\varinjlim H^1_{\delta}(L/K,G(L)) \cong H^1_{\delta}(K^{PV_\infty}/K,G(K^{PV_\infty}))$$ where the limit is taken over the family of normal closures of IPV extensions of finite type. 
\end{proposition}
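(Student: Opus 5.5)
The plan is to reduce Proposition \ref{prop: colim ipv} to Lemma \ref{lem: colim}, applied with $B = K^{PV_\infty}$, which is normal in $K^{\diff}$ over $K$. The family $\{B_i\}$ will be the normal closures in $K^{\diff}$ of IPV extensions of finite type of $K$. Lemma \ref{lem: colim} has three hypotheses, and I will check them in turn: each $B_i$ is a normal subset of $K^{PV_\infty}$, the family is directed under inclusion, and its union is all of $K^{PV_\infty}$. Once these hold, Lemma \ref{lem: colim} gives
$$\varinjlim H^1_{\delta}(L/K,G(L)) \cong H^1_{\delta}(K^{PV_\infty}/K,G(K^{PV_\infty})).$$
By Theorem \ref{thm: diff triviality in PV}, or Corollary \ref{cor: pv closed} in the constant case, this is in turn isomorphic to $H^1_\delta(K,G)$.

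\textbf{Step 1: the normal closure stays inside $K^{PV_\infty}$.} Let $L$ be an IPV extension of finite type, generated by a finite tuple $a$, and let $\widetilde{L}$ be the differential field generated by all realizations in $K^{\diff}$ of $\tp(a/K)$. Since $K^{PV_\infty}$ is normal in $K^{\diff}$ over $K$ (it is one of the normal fields in the chain recalled from \cite{meretzky2026galoistheoryautomorphismgroups}), every conjugate of $a$ lies in $K^{PV_\infty}$. Hence $\widetilde{L}\subseteq K^{PV_\infty}$, and $\widetilde{L}$ is normal by construction. By Proposition 3.2 of \cite{meretzky2026galoistheoryautomorphismgroups}, $\widetilde{L}$ is itself an IPV extension.

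\textbf{Step 2: directedness.} Given two IPV extensions of finite type $L, L'$, Lemma 2 of \cite{magid2022completepicardvessiotclosure} says the compositum $LL'$ is again an IPV extension of finite type. Its normal closure contains $\widetilde{L}$ and $\widetilde{L'}$, because the conjugates of a generator of $L$ are among the coordinates of conjugates of a generator of $LL'$. So the family is directed.

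\textbf{Step 3: the union is $K^{PV_\infty}$.} Take $x\in K^{PV_\infty}$. Then $K\langle x\rangle$ is finitely differentially generated inside $K^{PV_\infty}$. By Theorem 2 of \cite{magid2022completepicardvessiotclosure}, $K\langle x\rangle$ lies in some IPV extension of finite type $L$, and therefore $x\in \widetilde{L}$.

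The main obstacle I expect is the normality and definability bookkeeping in Step 1. Each $\widetilde{L}$ is a union of possibly infinitely many conjugates, so it need not be finitely generated. Fortunately Lemma \ref{lem: colim} only requires normality, not finite generation. Still, I must make sure that normality of $\widetilde{L}$ within $K^{\diff}$, which is what Theorem \ref{thm: SES} uses, agrees with normality within $K^{PV_\infty}$; this follows because $K^{PV_\infty}$ is itself normal. The other point to confirm is that Lemma \ref{lem: colim} applies to $H^1_\delta$ in the form stated. This holds because $H^1_\delta$ is $H^1_{\defin}$ for $T=\DCF_0$ with $M=K^{\diff}$, which is prime over $K$.
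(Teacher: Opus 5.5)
Your proposal is correct and follows the same route as the paper: you apply Lemma \ref{lem: colim} to the family of normal closures of IPV extensions of finite type, using Proposition 3.2 of \cite{meretzky2026galoistheoryautomorphismgroups} and Lemma 2 of \cite{magid2022completepicardvessiotclosure} to get normality and directedness. You are more explicit than the paper: you check normality via the normality of $K^{PV_\infty}$, and you use Theorem 2 of \cite{magid2022completepicardvessiotclosure} to show that the union exhausts $K^{PV_\infty}$, a point the paper simply asserts.
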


\begin{proof}
    By Proposition 3.2 of \cite{meretzky2026galoistheoryautomorphismgroups}, the normal closure of an IPV extension of finite type is again an IPV extension (although not necessarily of finite type). Then by Lemma 2 of \cite{magid2022completepicardvessiotclosure}, the normal closures of IPV extensions of finite type form a directed system whose direct limit is $K^{PV_\infty}$. We then apply Lemma \ref{lem: colim}.
\end{proof}

From Corollary \ref{cor: pv closed} and from Proposition \ref{prop: colim ipv} we obtain the following analogue of Fact \ref{fact: alg colim}, the usual colimit formula in the differential algebraic setting:

\begin{proposition}
    Let $G$ be a linear differential algebraic group defined over a differential field $K$. Then $$\varinjlim\HH^1_{\delta}(L/K,G(L)) \cong \HH^1_{\delta}(K^{\diff}/K,G(K^{\diff}))$$ where the limit is taken over the directed system of normal closures of iterated PV extensions of $K$ of finite type.
\end{proposition}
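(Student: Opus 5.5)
The statement follows by composing two results already in place: Proposition \ref{prop: colim ipv}, which identifies the colimit with the cohomology of $K^{PV_\infty}$, and Corollary \ref{cor: pv closed} or Theorem \ref{thm: diff triviality in PV}, which identifies that with the cohomology of $K^{\diff}$. The plan is therefore to write down the chain
$$\varinjlim\HH^1_{\delta}(L/K,G(L)) \cong \HH^1_{\delta}(K^{PV_\infty}/K,G(K^{PV_\infty})) \cong \HH^1_{\delta}(K^{\diff}/K,G(K^{\diff})),$$
where $L$ ranges over normal closures of IPV extensions of finite type. The work is in checking that each isomorphism is the natural inflation map, so that the composite is the canonical map from the colimit.

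For the first isomorphism I would invoke Proposition \ref{prop: colim ipv} directly. Its ingredients are the following, in order.
\begin{enumerate}
\item Normal closures of IPV extensions of finite type are normal, by Proposition 3.2 of \cite{meretzky2026galoistheoryautomorphismgroups}.
\item They form a directed system under compositum, by Lemma 2 of \cite{magid2022completepicardvessiotclosure}. The compositum of two normal subsets is normal and contains the normal closure of the compositum of the underlying finite-type IPV extensions.
\item Their union is $K^{PV_\infty}$: every element of $K^{PV_\infty}$ lies in some $K^{PV_n}$, hence in a finite tower of finite-type PV extensions, hence in such a normal closure.
\end{enumerate}
Given these, Lemma \ref{lem: colim} applies with $B = K^{PV_\infty}$, which is normal in $K^{\diff}$ over $K$.

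For the second isomorphism I would apply Theorem \ref{thm: SES} to $K \subseteq K^{PV_\infty} \subseteq K^{\diff}$. This gives injectivity of the inflation map $\HH^1_\delta(K^{PV_\infty}/K,G(K^{PV_\infty})) \to \HH^1_\delta(K^{\diff}/K,G(K^{\diff}))$. For surjectivity I would use Theorem \ref{thm: diff triviality in PV}, which says every torsor for a linear differential algebraic group $G$ over $K$ has a $K^{PV_\infty}$-point. This is the statement of Theorem \ref{thm: diff triviality in PV}, and it generalizes Corollary \ref{cor: pv closed} from linear algebraic groups in the constants to linear differential algebraic groups.

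The main obstacle is that second step: the triviality of $\HH^1_\delta(K^{\diff}/K^{PV_\infty},G)$ for arbitrary linear differential algebraic $G$, rather than only for groups in the constants. I would take this from \cite{AnandDavidComm}, as Theorem \ref{thm: diff triviality in PV} does. If only Theorem \ref{thm: pv closed} were available, I would argue as follows: $K^{PV_\infty}$ is algebraically closed and PV-closed, so by Theorem \ref{thm: pv closed}(3) every linear differential algebraic group over $K^{PV_\infty}$, in particular $G$, has trivial cohomology over it. Combined with exactness in Theorem \ref{thm: SES}, this yields surjectivity. I would also confirm that the standing assumption $C_K=(C_K)^{\alg}$ is in force, since it is needed for Lemma \ref{lem: normal IPV is PV} and the Magid–Meretzky correspondence behind the directed system.
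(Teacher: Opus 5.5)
Your proposal is correct and matches the paper's argument. The paper composes Proposition \ref{prop: colim ipv} with Corollary \ref{cor: pv closed}, and that corollary is itself Theorem \ref{thm: SES} applied to $K \subseteq K^{PV_\infty} \subseteq K^{\diff}$ together with Theorem \ref{thm: pv closed}. Your extra step of applying Theorem \ref{thm: pv closed}(3) over $K^{PV_\infty}$ (using that $K^{\diff}$ remains prime over the normal subfield $K^{PV_\infty}$) is a sensible refinement. It is needed because Corollary \ref{cor: pv closed} is literally stated only for linear algebraic groups in the constants, while the proposition concerns arbitrary linear differential algebraic $G$.
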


We then propose the following definition of boundedness for a differential field which can be added to the list of such notions in Section 2.3 of \cite{AnandDavidComm}:

\begin{definition}\label{def: diff bdd}
    A differential field $K$ is said to be bounded with respect to a differential algebraic group $G$ defined over $K$ if there is an iterated PV extension of finite type, whose normal closure $L$, satisfies $$\HH^1_{\delta}(L/K,G(L)) \cong \HH^1_{\delta}(K^{\diff}/K,G(K^{\diff})).$$
\end{definition}

\begin{remark}
    Note that Magid's example in Section 4 of \cite{magid2022completepicardvessiotclosure} of an normal IPV extension $E$, is remarked to be exactly the normal closure of an IPV extension of finite type, $E_0$. 
\end{remark}

\section{The minimal and definable Galois closures and colimit formulas for definable Galois cohomology}\label{sec: tt setting}

Motivated by the above observations in the differential algebraic setting, we give some analogous results in the model theoretic setting. Let $T$ be a t.t. theory with EI. Let $A$ be a small set of parameters and $M$ a prime model over $A$. Let $X$ be an $A$-definable set.

We make the following definitions and adjustments to the terminology to be consistent with the previous section.  

\begin{definition}
    Let $A$ be a $\dcl$-closed set of parameters. Let $X$ be an $A$-definable set.  
    \begin{enumerate}
        \item In this section, a definable Galois extension over $A$ relative to $X$ as defined in Definition \ref{def: def gal ext} will be called a definable Galois extension of finite type. 
        \item By a definable Galois extension $A \subseteq B \subseteq M$ with respect to $X$ we mean a union of definable Galois extensions of finite type over $A$ relative to $X$. 
        \item By $A^{X\text{DG}_1}$ we mean the union of all $X$-definable Galois extensions $A\subseteq B$ in $M$.  
        \item Having defined $A^{X\text{DG}_n}$ we define $A^{X\text{DG}_{n+1}}$ to be the union of all $X$-definable Galois extensions $A^{X\text{DG}_n}\subseteq B$ in $M$.  
        \item By $A^{X\text{DG}_\infty}$ we mean $\cup_n A^{X\text{DG}_n}$.
        \item An iterated definable Galois extension $A \subseteq B \subseteq M$ relative to $X$ is a $\dcl$-closed subset such that either there exists an infinite sequence of $\dcl$-closed subsets $A=B_0 \subseteq B_1 \subseteq \cdots $ with each $B_i$ a proper definable Galois extension of $B_{i-1}$ such that $B= \cup_iB_i$, or there exists a finite such sequence $A=B_0 \subseteq B_1 \subseteq \cdots B_n=B$. If $B$ is the union of a finite such chain where each $B_i/B_{i-1}$ is a definable Galois extension of finite type, we call $B$ an iterated definable Galois extension of finite type of $A$ relative to $X$. 
    \end{enumerate}
\end{definition}

\begin{lemma}\label{lem: mcl points}
    Let $G$ be an $A$-definable group. Let $P$ be an $A$-definable PHS for $G$. If $G(M) = G(\mcl(A))$ then $P(M) = P(\mcl(A))$. 
\end{lemma}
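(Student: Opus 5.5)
Let $p \in P(M)$ and let $\gamma : M \to M$ be an arbitrary elementary embedding; by Fact \ref{fact: mcl}(1) and the definition of $\mcl(A)$ as $\cap_\gamma \gamma(M)$, it suffices to show that $p \in \gamma(M)$. Since $\gamma(M) \prec M$ (elementary embeddings fix $A$ pointwise, as they range over $A$-elementary embeddings), and $P$ is $A$-definable and nonempty in $M$, the sentence $\exists x\, P(x)$ holds in $\gamma(M)$. So we can choose some $p_0 \in P(\gamma(M))$. This plays the same role as the realization $\bar{b}_0$ in the proof of part (3) of Fact \ref{fact: def Galois corresp.}.

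Because $P$ is a right PHS for $G$, there is a unique $g \in G$ with $p = p_0 \cdot g$. Both $p$ and $p_0$ lie in $M$, and $g$ is definable from them: it is the unique solution of $p_0 \cdot x = p$. Hence $g \in G(M)$. By hypothesis $G(M) = G(\mcl(A))$, so $g \in \mcl(A) \subseteq \gamma(M)$.

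Now $p = p_0 \cdot g$ is the value of an $A$-definable function, the action map, at the tuple $(p_0, g)$. Both entries of this tuple lie in $\gamma(M)$. Since $\gamma(M)$ is an elementary substructure containing $A$, it is $\dcl$-closed over its own elements together with $A$, so $p \in \gamma(M)$. As $\gamma$ was arbitrary, $p \in \mcl(A)$, and therefore $P(M) = P(\mcl(A))$.

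The only point needing care is that $\mcl(A)$ is the intersection over \emph{all} elementary self-embeddings of $M$, each of which is over $A$. This is what guarantees that $A \subseteq \gamma(M)$ and that $\gamma(M) \prec M$, so that $P(\gamma(M)) \neq \emptyset$. Once that is in place, the argument is a direct transfer of the torsor trick used in Fact \ref{fact: def Galois corresp.}(3); I do not expect a genuine obstacle.
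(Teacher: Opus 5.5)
Your proof is correct and is essentially the paper's argument: choose a point $p_0\in P(\gamma(M))$, take the unique $g\in G(M)=G(\mcl(A))\subseteq\gamma(M)$ with $p=p_0\cdot g$, and conclude $p\in\dcl(A,p_0,g)\subseteq\gamma(M)$ for every $A$-elementary self-embedding $\gamma$. The appeal to Fact \ref{fact: mcl}(1) is not needed; the definition of $\mcl(A)$ as $\bigcap_\gamma\gamma(M)$ is all you use.
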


\begin{proof}
    By definition $P$ is nonempty and therefore has a point $p$ in $M$. Let $\gamma:M \to M$ be any $A$-elementary embedding. $\gamma(M) \prec M$ must also have a realization of $P$, $p_1$. By assumption $G(M) = G(\mcl(A)) = G(\gamma(M))$. Both $p$ and $p_1$ are realizations of $P$ in $M$ and so there is a $g \in G(\mcl(A))$ with $p_1g = p$. Hence $p \in \dcl(p_1,g) \subseteq \gamma(M)$. Since $\gamma$ was arbitrary, $p \in \mcl(A)$.
\end{proof}

\begin{remark}
    If $G$ is strongly minimal or lives on a strongly minimal set then $G(M) = G(\mcl(A))$. For example, let $K$ be a differential field of characteristic $0$ and the ambient theory be the theory of differentially closed fields of characteristic $0$, then one can take $G$ to be an algebraic group defined over the constants $C_K$ and the above condition will be satisfied.
\end{remark}

Lemma \ref{lem: mcl points} has the following consequences:

\begin{theorem}\label{thm: mcl points}
    Let $G$ be an $A$-definable group with $G(M) = G(\mcl(A))$. Then
    \begin{enumerate}
        \item The definable Galois cohomology of $G$ over $\mcl(A)$ is trivial: $$\HH^1_{\defin}(M/\mcl(A),G(M)) = 1.$$
        \item Using the short exact sequence $$\HH^1_{\defin}(\mcl(A)/A,G(\mcl(A))) \cong \HH^1_{\defin}(A,G)$$ Namely, all of the definable Galois cohomological information for $G$ in $M$ is contained in $\mcl(A)$.
        \item Any definable Galois extension relative to $G$ is contained in $\mcl(A)$.
    \end{enumerate}
\end{theorem}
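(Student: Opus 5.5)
The three parts all reduce to Lemma \ref{lem: mcl points}, applied over $A$ and over the intermediate normal set $\mcl(A)$; I would prove them in order.

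\emph{Part (1).} By Fact \ref{fact: mcl}, $M$ is still prime over $\mcl(A)$, and $\mcl(A)$ is normal in $M$ over $A$: if $b\in\mcl(A)$ and $b'\models\tp(b/A)$ lies in $M$, then some $\sigma\in\aut(M/A)$ sends $b$ to $b'$, and $\sigma(\mcl(A))=\mcl(A)$. So Theorem \ref{thm: SES} applies to $A\subseteq\mcl(A)\subseteq M$, and $\HH^1_{\defin}(M/\mcl(A),G(M))$ classifies $\mcl(A)$-definable right PHSs for $G$ up to $\mcl(A)$-definable isomorphism. Let $P$ be such a PHS, defined over a finite tuple $e\in\mcl(A)$. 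I would apply Lemma \ref{lem: mcl points} over $A'=\dcl(A,e)$:
\begin{itemize}
\item $M$ is prime over $A'$;
\item $\mcl(A')=\mcl(A)$, since $A\subseteq A'\subseteq\mcl(A)$ and $\mcl$ is monotone and idempotent (Fact \ref{fact: mcl}(2)) inside a model that is prime over both sets;
\item hence $G(M)=G(\mcl(A'))$.
\end{itemize}
The lemma then gives $P(M)=P(\mcl(A))$, so $P$ has an $\mcl(A)$-point $p$. The map $g\mapsto pg$ is then an $\mcl(A)$-definable isomorphism $G\cong P$, and the class of $P$ is trivial.

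The step I expect to be the main obstacle is checking $\mcl(A')=\mcl(A)$. I would argue it directly from the definition. Every $A'$-elementary self-embedding of $M$ is in particular an $A$-embedding, so $\mcl(A)\subseteq\mcl(A')$. Conversely, any $A$-embedding $\gamma$ fixes $\mcl(A)$ setwise and lands inside it, which suggests using the atomically-algebraic characterization to get the reverse inclusion. Alternatively, I would avoid the equality altogether by rerunning the proof of Lemma \ref{lem: mcl points} over $A'$ directly:
\begin{itemize}
\item for an $A'$-embedding $\gamma$, $\gamma(M)$ contains a point $p_1\in P$, since $P$ is $A'$-definable;
\item $G(M)=G(\mcl(A))\subseteq G(\gamma(M))$, because $\mcl(A)\subseteq\mcl(A')\subseteq\gamma(M)$;
\item so $p=p_1g\in\gamma(M)$.
\end{itemize}
Therefore $P(M)\subseteq\mcl(A')$. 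To conclude I only need $\mcl(A')\subseteq\mcl(A)$, and for that I would cite Fact \ref{fact: mcl}(2).

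\emph{Part (2).} Theorem \ref{thm: SES} for $A\subseteq\mcl(A)\subseteq M$ gives an exact sequence
\[
1\to\HH^1_{\defin}(\mcl(A)/A,G(\mcl(A)))\to\HH^1_{\defin}(A,G)\to\HH^1_{\defin}(M/\mcl(A),G(M)).
\]
By (1) the last term is trivial. Exactness means the kernel of the second map is the image of the first, so the first map is onto as well as injective, giving the claimed bijection. For a direct check of surjectivity: Lemma \ref{lem: mcl points} over $A$ shows that every $A$-definable PHS for $G$ has an $\mcl(A)$-point, i.e.\ its class lies in $\HH^1_{\defin}(\mcl(A)/A,G(\mcl(A)))$.

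\emph{Part (3).} Let $B=\dcl(A,b)$ be a definable Galois extension relative to $G$. By the torsor theorem (Proposition \ref{prop: binding groupoid} with Theorem \ref{thm: cohomological fact}), $b$ lies in an $A$-definable right PHS $Q$ for a definable group $H\le\dcl(A,G)$, namely the extrinsic Galois group. Any $A$-embedding $\gamma$ fixes $G(M)\subseteq\mcl(A)$ pointwise, so $H(M)\subseteq\dcl(A,G(M))\subseteq\mcl(A)$, i.e.\ $H(M)=H(\mcl(A))$. Lemma \ref{lem: mcl points} applied to $Q$ and $H$ then gives $b\in\mcl(A)$, hence $B\subseteq\mcl(A)$. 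Alternatively, the argument of Fact \ref{fact: def Galois corresp.}(3) works with $G(M)\subseteq\mcl(A)$ in place of strong minimality: $b_1\in\dcl(A,G(M),b_0)\subseteq\gamma(M)$. Unions of such extensions are then contained in $\mcl(A)$ as well.
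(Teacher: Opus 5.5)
Your proof is correct. For parts (2) and (3) it is the paper's argument: Theorem \ref{thm: SES} for $A\subseteq\mcl(A)\subseteq M$ combined with Lemma \ref{lem: mcl points}, and Lemma \ref{lem: mcl points} applied to the torsor $(Q,H)$ attached to the extension.

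The real difference is in (1). The paper applies Lemma \ref{lem: mcl points} only to $A$-definable PHSs. Strictly speaking, this shows only that the restriction map $\HH^1_{\defin}(A,G)\to\HH^1_{\defin}(M/\mcl(A),G(M))$ is trivial. That is exactly what (2) needs. However, (1) as stated concerns all torsors definable with parameters from $\mcl(A)$. You reduce to $A'=\dcl(A,e)$, using Fact \ref{fact: mcl}(3) to keep $M$ prime over $A'$. You then use Fact \ref{fact: mcl}(2) to get $\mcl(A')\subseteq\mcl(\mcl(A))=\mcl(A)$; the reverse inclusion is immediate. This is the step needed for the full triviality claim, so your version of (1) is the more complete one. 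Your direct surjectivity check in (2) also shows that (2) does not depend on that extra step.

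One slip in (3): an $A$-elementary self-embedding of $M$ need not fix $\mcl(A)$, or $G(M)$, pointwise. Already elements of $\aut(M/A)$ move points of $\acl(A)\setminus\dcl(A)$; for example, take $G$ the additive group in $\operatorname{ACF}_0$ over $\mathbb{Q}$. So ``fixes $G(M)$ pointwise'' is false. The inclusion you want, $H(M)\subseteq\dcl(A,G(M))\subseteq\mcl(A)$, holds for a different reason: $\mcl(A)$ is an intersection of elementary submodels, hence $\dcl$-closed. Also, the witnesses from $G$ for a point of $H(M)$ can be chosen in $M$ because $M\prec\bar{M}$. Your alternative for (3), rerunning the proof of Fact \ref{fact: def Galois corresp.}(3) with $G(M)\subseteq\mcl(A)$ in place of strong minimality, is sound. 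It is slightly more economical than the paper's route, since it uses only strong internality and not the torsor structure.
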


\begin{proof}
    \begin{enumerate}
        \item Let $P$ be an $A$-definable right PHS for $G$. By Lemma \ref{lem: mcl points}, $P$ has an $mcl(A)$ point so by definition $\HH^1_{\defin}(M/\mcl(A),G(M)) = 1$.
        \item By Theorem \ref{thm: SES}, the intermediate extension $A \subseteq \mcl(A) \subseteq M$ induces a short exact sequence in cohomology $$1 \to \HH^1_{\defin}(\mcl(A)/A,  G(\mcl(A))) \to \HH^1_{\defin}(A, G) \to \HH^1_{\defin}(M/\mcl(A), G(M)).$$ As the last term of the above sequence is $1$ we have $$\HH^1_{\defin}(\mcl(A)/A,G(\mcl(A))) \cong \HH^1_{\defin}(A,G).$$
        \item Let $A \subseteq B = \dcl(A,b)$ be a definable Galois extension relative to $G$. Let $(Q,H)$ be the associated $A$-definable torsor. As $H$ lives on $G$, $H(M) = H(\mcl(A))$. Then by Lemma \ref{lem: mcl points}, $Q(M) = Q(\mcl(A))$. As $B$ is generated by a $M$-point of $Q$, $B$ is contained in $\mcl(A)$. This is a slight strengthening of Fact \ref{fact: mcl} (3).
    \end{enumerate}
\end{proof}

The next proposition generalizes the relationships between the series Picard-Vessiot envelopes, the Picard-Vessiot closure, the minimal closure, and differential closure of a differential field $K \subseteq K^{PV_1} \subseteq K^{PV_2}  \cdots \subseteq  K^{PV_\infty}$ as in \cite{meretzky2026galoistheoryautomorphismgroups}. 

\begin{proposition}\label{prop: struct of M}
    Let $A$ be a $\dcl$-closed set of parameters. Let $X$ be an $A$-definable set such that $X(M) = X(\mcl(A))$. Then each $A^{X\text{DG}_n}$ is contained in $\mcl(A)$ and we have $$A \subseteq A^{X\text{DG}_1} \subseteq A^{X\text{DG}_2} \cdots \subseteq A^{X\text{DG}_\infty} \subseteq \mcl(A) \subseteq M.$$ Furthermore each $A^{X\text{DG}_n}$ is normal. 
\end{proposition}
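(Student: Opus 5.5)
The plan is an induction on $n$, proving simultaneously that $A^{X\text{DG}_n}\subseteq \mcl(A)$ and that $A^{X\text{DG}_n}$ is normal in $M$ over $A$. The chain of inclusions is then immediate: $A^{X\text{DG}_\infty}$ is the union of the $A^{X\text{DG}_n}$, and $\mcl(A)\subseteq M$ by definition. The case $n=0$ is $A$ itself, which is trivially normal and contained in $\mcl(A)$.

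For the inductive step, put $B_n = A^{X\text{DG}_n}$ and assume $B_n$ is $\dcl$-closed, normal, and contained in $\mcl(A)$.

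\emph{Containment.} By Fact \ref{fact: mcl}(3), $M$ is still prime over $B_n$, so it makes sense to take minimal closures over $B_n$. Let $C=\dcl(B_n,b)$ be a definable Galois extension of finite type of $B_n$ relative to $X$. I would repeat the argument of Theorem \ref{thm: mcl points}(3) and Lemma \ref{lem: mcl points}, but with $X$ as the reference set rather than a group. Let $\gamma:M\to M$ be any $A$-elementary embedding. Since $B_n$ is normal and $\gamma$ preserves types over $A$, we get $\gamma(B_n)\subseteq B_n$. Moreover $B_n\subseteq\mcl(A)\subseteq\gamma(M)$. Hence $\tp(b/B_n)$ has a realization $b_0$ in $\gamma(M)$, provided $\gamma(M)$ is prime over $B_n$. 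I expect to get that primeness by applying Fact \ref{fact: mcl}(3) inside $\gamma(M)\cong M$, using that $B_n$ lies in $\mcl(A)$ computed in $\gamma(M)$. By strong internality, $b\in\dcl(B_n,X,b_0)$, and then $b\in\dcl(B_n,X(M),b_0)$. Since $X(M)=X(\mcl(A))\subseteq\gamma(M)$, it follows that $b\in\gamma(M)$. As $\gamma$ was arbitrary, $b\in\mcl(A)$. This is exactly the argument of Fact \ref{fact: def Galois corresp.}(3), with the strong minimality hypothesis replaced by $X(M)=X(\mcl(A))$.

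\emph{Normality.} Let $c\in B_{n+1}$ and $c'\in M$ with $c'\equiv_A c$. By Fact \ref{fact: mcl}(4), or by strong $\omega$-homogeneity, there is $\sigma\in\aut(M/A)$ with $\sigma(c)=c'$. Since $B_n$ is normal, $\sigma(B_n)=B_n$. Also $\sigma$ carries a definable Galois extension $\dcl(B_n,b)$ of $B_n$ relative to $X$ to $\dcl(B_n,\sigma(b))$. Here $\tp(\sigma(b)/B_n)=\sigma(\tp(b/B_n))$, and since $X$ is $A$-definable, being strongly internal and weakly orthogonal to $X$ is preserved under $\sigma$. So $\dcl(B_n,\sigma(b))$ is again such an extension, and therefore $c'\in B_{n+1}$. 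Finally, $B_{n+1}$ is $\dcl$-closed: it is a union of a directed family, since the compositum of two such extensions is generated by the concatenated tuple, whose type should still be strongly internal and weakly orthogonal. If that directedness is awkward, I would simply take $\dcl$ of the union, which is harmless for normality.

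The step I expect to be the main obstacle is the realization of $\tp(b/B_n)$ inside $\gamma(M)$ when $B_n$ is infinite, that is, justifying that $\gamma(M)$ is atomic over $B_n$ and that $\tp(b/B_n)$ is isolated. For isolation I would use Fact \ref{fact: def Galois corresp.}(1) over the base $B_n$. For atomicity, I would argue that $\gamma$ restricts to an automorphism of $\mcl(A)$, because $\gamma(\mcl(A))\subseteq\mcl(A)$ and elements of $\mcl(A)$ are atomically algebraic. Then $\gamma(M)$ is prime over $\gamma(B_n)=B_n$, being the image of $M$, which is prime over $B_n$.
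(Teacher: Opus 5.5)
Your proof is correct, and it reaches the containment by a somewhat different and more elementary route than the paper. The paper just invokes Theorem \ref{thm: mcl points}(3). That means taking the torsor $(Q,H)$ attached to the extension by the torsor theorem, noting that $H$ lives in $\dcl(A,X)$ so that $H(M)=H(\mcl(A))$, and concluding with Lemma \ref{lem: mcl points}: two points of $Q$ differ by an element of $H(\mcl(A))$. The levels $n\ge 2$ are left implicit there; handling them that way requires re-basing at $A^{X\text{DG}_n}$ and knowing $\mcl(A^{X\text{DG}_n})=\mcl(A)$, which follows from Fact \ref{fact: mcl}(2),(3).

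You never pass to a torsor. Strong internality gives $b\in\dcl(B_n,X(M),b_0)$ directly; this is Poizat's argument from Fact \ref{fact: def Galois corresp.}(3), with strong minimality replaced by the hypothesis $X(M)=X(\mcl(A))$. By quantifying only over $A$-embeddings and using $B_n\subseteq\mcl(A)\subseteq\gamma(M)$, you make the induction explicit without ever comparing $\mcl(B_n)$ with $\mcl(A)$. The paper's route packages the key step in Lemma \ref{lem: mcl points}, which also drives the cohomological parts of Theorem \ref{thm: mcl points}. Yours is self-contained and needs neither the torsor theorem nor any group structure. Your normality argument is the paper's, with the detail $\sigma(B_n)=B_n$ supplied.

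Two side remarks. First, the obstacle you anticipate is not real. Once $\tp(b/B_n)$ is isolated by a formula with parameters from $B_n\subseteq\gamma(M)$, it is realized in $\gamma(M)$ simply because $\gamma(M)\prec\bar{M}$. So neither primeness or atomicity of $\gamma(M)$ over $B_n$ nor $\gamma(B_n)\subseteq B_n$ is needed.

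Second, your claim that $B_{n+1}$ is $\dcl$-closed because the family of extensions is directed is false under $X(M)=X(\mcl(A))$ alone. Work in $\DCF_0$ with $K=\mathbb{R}(x)$, $X$ the constants, and $e'=e\neq 0$ in $K^{\diff}$. Then $K\langle e\rangle$ and $K\langle ie\rangle$ are both definable Galois extensions relative to $X$. Their compositum contains the new constant $i=ie/e$, so the concatenated tuple is not weakly orthogonal to $X$. Directedness is exactly what Lemma \ref{lem: IDG form directed system} extracts from the stronger hypothesis $X(M)=X(A)$. Since nothing in your induction actually uses that $B_n$ is $\dcl$-closed, simply drop that clause.
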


\begin{proof}
    This follows from Theorem \ref{thm: mcl points} (3). Normality follows because each conjugate under $\aut(M/A)$ of an $X$-definable Galois extension is again an $X$-definable Galois extension.
\end{proof}

We now strengthen the assumption $X(M)=X(\mcl(A))$ to the assumption that $X(M)=X(A)$. This is a generalization of the assumption that the field of constants of the base field be algebraically closed in the context of Picard-Vessiot differential Galois theory. It has the consequence that each $X$-definable Galois extension is then normal. An immediate consequence is that the family of $X$-definable Galois extensions and the family of normal infinite $X$-iterated definable Galois extensions form directed systems:

\begin{lemma}\label{lem: IDG form directed system}
    Let $B$ and $C$ be iterated definable Galois extensions of $A$ of finite type relative to $X$ in $M$. Then $\dcl(B,C)$ is again an iterated definable Galois extension of $A$ of finite type relative to $X$ in $M$.
\end{lemma}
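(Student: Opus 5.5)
The plan is to build a single chain for $\dcl(B,C)$ by first running the chain for $B$, then \emph{transporting} the chain for $C$ over $B$, step by step. Write $A=B_0\subseteq B_1\subseteq\cdots\subseteq B_n=B$ and $A=C_0\subseteq C_1\subseteq\cdots\subseteq C_m=C$. Here $B_i=\dcl(B_{i-1},b_i)$ and $C_j=\dcl(C_{j-1},c_j)$, with $\tp(b_i/B_{i-1})$ and $\tp(c_j/C_{j-1})$ strongly internal and weakly orthogonal to $X$. I would set $D_j=\dcl(B,c_1,\dots,c_j)$ and aim to show that each $D_{j}/D_{j-1}$ is either trivial, in which case it is dropped from the chain, or a definable Galois extension of finite type relative to $X$. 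Concatenating $B_0\subseteq\cdots\subseteq B_n=D_0\subseteq D_1\subseteq\cdots\subseteq D_m=\dcl(B,C)$ then gives the required finite chain.

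For each step $j$, let $q_j=\tp(c_j/D_{j-1})$, which extends $p_j=\tp(c_j/C_{j-1})$.

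\emph{Strong internality.} Any two realizations $c,c'$ of $q_j$ also realize $p_j$. Strong internality of $p_j$ gives $c'\in\dcl(C_{j-1},X,c)\subseteq\dcl(D_{j-1},X,c)$, so $q_j$ is strongly internal to $X$.

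\emph{Weak orthogonality.} By Fact~\ref{weak orth fact} it suffices to show $X(\dcl(D_{j-1},c_j))=X(D_{j-1})$. This is where the hypothesis $X(M)=X(A)$ enters. All of these sets lie in $M$, since $M$ is prime over each finitely generated stage and the types are atomic by Fact~\ref{fact: def Galois corresp.}(1). Hence
\[X(\dcl(D_{j-1},c_j))\subseteq X(M)=X(A)\subseteq X(D_{j-1}),\]
so weak orthogonality is automatic.

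\emph{Finite generation.} Each $D_j=\dcl(D_{j-1},c_j)$ is generated by a finite tuple over $D_{j-1}$, so each nontrivial step is of finite type.

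The main obstacle is the definitional bookkeeping rather than any deep step. Three points need checking. First, the chain must consist of \emph{proper} extensions, which is why the steps with $D_j=D_{j-1}$ are discarded. Second, the definitions must allow the base of each step to be a $\dcl$-closed set such as $D_{j-1}$ rather than $A$ itself. Third, the weak orthogonality at each stage must really be measured inside $M$, so that $X(M)=X(A)$ applies; here I would invoke that $M$ is prime, hence atomic, over each $D_{j-1}$, since $D_{j-1}$ is finitely $\dcl$-generated over $A$. If one insists on weak orthogonality in the monster sense, via the unique extension to $\dcl(D_{j-1},X)$, the fallback is to use the isolation of types of fundamental systems over $\dcl(A,X)$ from Lemma~\ref{lem: isol fund sys}. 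Those isolated types are realized in $M$, and the equality $X(M)=X(A)$ then forces the extension to be unique.
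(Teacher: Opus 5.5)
Your proposal is correct and is essentially the paper's argument. The paper inducts on the length of the chain for $C$, adjoining $c_n$ over $\dcl(B,C_{n-1})$ (your $D_{n-1}$); strong internality is inherited by restriction, and weak orthogonality comes from Fact~\ref{weak orth fact} together with $X(M)=X(A)$. You unroll that induction, and your discarding of trivial steps handles the properness requirement, which the paper leaves implicit. The fallback in your last paragraph is unnecessary: the condition $X(\dcl(D_{j-1},c))=X(D_{j-1})$ is invariant under $\aut(\bar{M}/D_{j-1})$, so checking it for the single realization $c_j\in M$ already suffices.
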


\begin{proof}
    We follow Lemma 2 of \cite{magid2022completepicardvessiotclosure}. Let $A=B_0 \subseteq B_1 \subseteq \cdots B_m=B$ and $A=C_0 \subseteq C_1 \subseteq \cdots C_n=C$ witness that $B$ and $C$ are iterated definable Galois extensions of finite type. The claim is proved by induction on $n$. If $n$ is $0$ then $C=A$ and $\dcl(B,C) = B$. Assume true for $n-1$. So $\dcl(B_m,C_{n-1})$ is an iterated definable Galois extension of $A$. By assumption $C_n = \dcl(C_{n-1},c_n)$ where $tp(c_n/C_{n-1})$ is weakly orthogonal to $X$ and strongly internal to $X$. Then by Fact \ref{weak orth fact}, $\tp(c_n/B_m,C_{n-1})$ is still weakly orthogonal to $X$ as $X(M) = X(A)$. Adding parameters preserves strong internality to $X$. So $\dcl(B,C)$ is a definable Galois extension of $\tp(c_n/B_m,C_{n-1})$ of finite type. So $\dcl(B,C)$ is still an iterated definable Galois extension of finite type.
\end{proof}

\begin{lemma}\label{lem: finite type iterated is dg}
    Let $X(M)=X(A)$. A normal iterated definable Galois extension $B$ of $A$ of finite type relative to $X$ must be a definable Galois extension of finite type
\end{lemma}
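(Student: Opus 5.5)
The plan is to produce a single finite tuple $b$ with $B=\dcl(A,b)$ and then check the two conditions of Definition \ref{def: def gal ext}: weak orthogonality and strong internality of $\tp(b/A)$ to $X$.

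\emph{The generator and weak orthogonality.} Let $A=B_0\subseteq B_1\subseteq\cdots\subseteq B_n=B$ witness that $B$ is an iterated definable Galois extension of finite type. For each $i$, write $B_i=\dcl(B_{i-1},b_i)$, where $q_i=\tp(b_i/B_{i-1})$ is strongly internal and weakly orthogonal to $X$. Put $b=(b_1,\dots,b_n)$, so that $B=\dcl(A,b)$. Since $X(M)=X(A)$ and $B\subseteq M$, we get $X(\dcl(A,b))=X(A)$. By Fact \ref{weak orth fact}, $\tp(b/A)$ is therefore weakly orthogonal to $X$.

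\emph{Strong internality, inside $M$.} The substantive step is to show that every realization $b'$ of $p=\tp(b/A)$ lies in $\dcl(A,X,b)$. I would first handle realizations $b'$ in $M$, by induction on $i$, showing that $b'_i\in\dcl(A,X,b)$ for each $i$.
\begin{itemize}
\item By strong homogeneity of $M$ over $A$, pick $\sigma\in\aut(M/A)$ with $\sigma(b)=b'$. By normality of $B$, $\sigma$ restricts to an automorphism of $B$, so $b'\in B$.
\item Assume $B'_{i-1}:=\dcl(A,b'_1,\dots,b'_{i-1})$ is contained in $\dcl(A,X,b)$. The type $\sigma(q_i)$ over $B'_{i-1}$ is again strongly internal to $X$, since strong internality is preserved under automorphisms.
\item $M$ is prime over the finitely generated set $B'_{i-1}$, so $\sigma(q_i)$ has a realization $e\in M$. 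Indeed $e$ can be taken to be $\sigma(b_i)=b'_i$ itself. The point is rather that normality of $B$ puts every such realization inside $B=\dcl(A,b)$.
\item Strong internality of $\sigma(q_i)$ then gives, for any realization $e$ of $\sigma(q_i)$ in $B$, that $b'_i\in\dcl(B'_{i-1},X,e)\subseteq\dcl(A,X,b)$.
\end{itemize}
This completes the induction in $M$. Alternatively, and more simply, normality gives $b'\in B=\dcl(A,b)$ directly, which already yields $b'\in\dcl(A,b)\subseteq\dcl(A,X,b)$ for $b'\in M$.

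\emph{Passing to the monster model.} I expect this to be the main obstacle, because strong internality quantifies over all realizations of $p$ in $\bar M$, not just those in $M$.
\begin{itemize}
\item $p$ is isolated, say by $\varphi(y)$ over $A$, since $B\subseteq M$ is atomic.
\item For $b'\in\varphi(M)$, normality gives $b'\in\dcl(A,b)$. Since $\varphi(M)=p(M)$ is $\aut(M/A)$-invariant and contained in $\dcl(A,b)$, I would show that $\varphi(M)$ is finite. The plan is to use isolation together with $b'\in\dcl(A,b)$ for every $b'\models p$ in $M$, via the Galois definability fact: $p(M)$ is an $\aut(M/\dcl(A,b))$-fixed set of $\aut(M/A)$-conjugates of $b$.
\item Finiteness would make $p$ algebraic, and the relation "$y'=g(b)$ for one of finitely many $A$-definable functions $g$" would then transfer to $\bar M$ by elementarity.
\item If finiteness fails, the fallback is a compactness argument. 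Suppose the type $p(y')\cup\{\,y'\neq g(b,x) : g \text{ an } A\text{-definable function},\ x\in X\,\}$ were consistent with $\tp(b/A)$. Then, together with the isolation of $\tp(b'/\dcl(A,X),b)$ from Lemma \ref{lem: isol fund sys}, one would expect to realize such a $b'$ inside $M$, contradicting the previous step.
\end{itemize}

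With strong internality in $\bar M$ established, $B=\dcl(A,b)$ is a definable Galois extension of finite type.
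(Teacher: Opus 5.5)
Your first two steps are the paper's entire proof. The paper takes a generator $b$ of $B$, gets weak orthogonality from Fact~\ref{weak orth fact} via $X(M)=X(A)$, and gets internality from normality plus homogeneity of $M$. In other words, it only checks that realizations of $p=\tp(b/A)$ \emph{lying in $M$} are in $\dcl(A,b)$, which is your ``alternatively, more simply'' remark; the inductive bullet list is circular and can be dropped. You are right that Definition~\ref{def: def gal ext} asks for this for all realizations in $\bar M$, but your third step does not supply it.

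\begin{itemize}
\item \textbf{Finiteness of $\varphi(M)$.} This fails as soon as $p$ is non-algebraic. We have $\varphi(M)=p(M)$, and each $A$-sentence $\exists^{\geq k}y\,\varphi(y)$ true in $\bar M$ is true in $M$. For a Picard--Vessiot extension, $p(M)$ is a torsor for $G(C_K)$.
\item \textbf{The compactness fallback.} It would require $M$ to realize a partial type over $A\cup\{b\}$ that is in general non-isolated. Since $M$ is atomic over $A\cup\{b\}$, it omits such types.
\item \textbf{Lemma~\ref{lem: isol fund sys}.} It does not help: by Remark~\ref{rmk: fund in M}, its isolating parameter lies in $\dcl(A,X(M))$ exactly when the realization is already in $M$.
\end{itemize}

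Moreover, this step cannot be repaired at this level of generality. Consider the following three-sorted structure:
\begin{itemize}
\item an infinite $\mathbb{F}_2$-vector space $V$, with constants naming a countably infinite subgroup $V_0$;
\item $P_1=V$, viewed as a regular $V$-set;
\item $P_2=P_1\times V$, with projection $\pi$, $V$ acting on the second coordinate, and maps $T_v(y,u)=(y+v,u)$ for $v\in V_0$.
\end{itemize}
Its theory is $\omega$-stable, since types over a model reduce to types in $V$ plus finite combinatorial data; pass to $T^{\mathrm{eq}}$ for elimination of imaginaries.

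Let $A=\dcl(\emptyset)$, $X=V$, $b_1=0\in P_1$ and $b_2=(0,0)$. The prime model has $V(M)=P_1(M)=V_0$ and $P_2(M)=V_0\times V_0$, all inside $\dcl(A,b_1,b_2)$. Hence:
\begin{itemize}
\item $X(M)=X(A)$;
\item $B=\dcl(A,b_1,b_2)$ is normal;
\item $A\subsetneq\dcl(A,b_1)\subsetneq B$ is a tower of definable Galois extensions of finite type relative to $V$. Each step has a $V$-torsor as its realization set, and weak orthogonality is automatic from $X(M)=X(A)$.
\end{itemize}

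Now take $w\in V(\bar M)\setminus V_0$.
\begin{itemize}
\item The pair $(w,(w,0))$ realizes $\tp(b_1b_2/A)$: use $y\mapsto y+w$ on $P_1$ and $(y,u)\mapsto(y+w,u)$ on $P_2$.
\item Fix $u_0\neq 0$. Adding $u_0$ to the second coordinate over $w+V_0$, and fixing everything else, is an automorphism. It fixes $V$, $P_1$ and $b_2$, but moves $(w,0)$.
\end{itemize}
Hence $(w,0)\notin\dcl(A,X,b_1,b_2)$. Every generator of $B$ is $A$-interdefinable with $(b_1,b_2)$, so $B$ is not a definable Galois extension of finite type in the sense of Definition~\ref{def: def gal ext}.

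So what your first part and the paper's argument actually establish is only $p(M)\subseteq\dcl(A,b)$. Strong internality in $\bar M$ needs an additional, theory-specific input.
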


\begin{proof}
    Let $b$ $\dcl$-generate $B$ over $A$. Then by normality and strong homogeneity any other realization of the type of $b$ over $A$ must be contained in $\dcl(A,b,X(A))$. Again as $X(M)=X(A)$, $\tp(b/A)$ is weakly orthogonal to $X$ by Fact \ref{weak orth fact}. 
\end{proof}

\begin{proposition}\label{prop: idg colim}
    Let $G$ be an $A$ definable group living on an $A$-definable set $X$. Assume $X(M)=X(A)$. Then $$\varinjlim \HH^{1}_{\defin}(B/A, G(B)) \cong \HH^{1}_{\defin}(A^{XDG_1}/A, G(A^{XDG_1}))$$ where the limit is taken over the family of normal iterated definable Galois extensions of finite type $B$ of $A$ relative to $X$, equivalently, the family of definable Galois extensions of finite type.
\end{proposition}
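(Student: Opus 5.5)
The plan is to reduce the statement to Lemma \ref{lem: colim} with $B = A^{XDG_1}$ and the index family given by the definable Galois extensions of finite type relative to $X$ inside $M$. Three things must be checked: (i) $A^{XDG_1}$ is normal in $M$ over $A$; (ii) each definable Galois extension of finite type $\dcl(A,b)$ is normal, and these extensions form a directed system under inclusion; (iii) their union is $A^{XDG_1}$. Once these hold, Lemma \ref{lem: colim} gives the isomorphism directly. The parenthetical equivalence of index families is handled separately via Lemma \ref{lem: finite type iterated is dg}.

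For (i), since $X(M)=X(A)$ in particular gives $X(M)=X(\mcl(A))$, Proposition \ref{prop: struct of M} shows that $A^{XDG_1}$ is normal. Statement (iii) holds by definition: $A^{XDG_1}$ is the union of all definable Galois extensions relative to $X$, and each of these is itself a union of extensions of finite type.

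For normality in (ii), let $b'$ realize $\tp(b/A)$ in $M$. Strong internality gives $b' \in \dcl(A,X,b)$, so $b' \in \dcl(A,X(M),b)$; this uses that $M$ is atomic over $A b$ and that the witnessing tuple from $X$ can be found in $M$ by elementarity. Since $X(M)=X(A)$, we get $b' \in \dcl(A,b)$. For directedness, given $\dcl(A,b)$ and $\dcl(A,c)$, Lemma \ref{lem: IDG form directed system} shows that $\dcl(A,b,c)$ is an iterated definable Galois extension of finite type. It is normal, because any $A$-conjugate of $(b,c)$ has coordinates in $\dcl(A,b)$ and $\dcl(A,c)$ respectively. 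Lemma \ref{lem: finite type iterated is dg} then says it is a definable Galois extension of finite type. The same two lemmas give the equivalence of the two index families: a definable Galois extension of finite type is trivially a (normal) iterated one of length one, and conversely a normal iterated one of finite type is a definable Galois extension of finite type.

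The expected obstacle is the normality step. The concern is that $\dcl(A,X,b)$ is taken in the monster model while $M$ is only prime, so one must make sure the witnessing element of $X$ actually lies in $M$. Here is how I would argue it. Choose an $A$-formula $\varphi(b,x,b')$ with a unique solution $b'$ and $x$ ranging over $X$. The formula $\exists x\in X\,\varphi(b,x,b')$ is over $A b b' \subseteq M$, so by elementarity a witness exists in $X(M)=X(A)$. If this goes through, everything else is bookkeeping.
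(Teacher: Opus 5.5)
Your proposal is correct and follows the paper's proof: identify normal iterated definable Galois extensions of finite type with definable Galois extensions of finite type via Lemma \ref{lem: finite type iterated is dg}, get directedness from Lemma \ref{lem: IDG form directed system}, note that the union is $A^{XDG_1}$, and apply Lemma \ref{lem: colim}. Your explicit normality checks, which the paper only asserts in passing before Lemma \ref{lem: IDG form directed system}, are sound provided the formula you transfer to $M$ also says that $\varphi(b,x,z)$ has at most one solution $z$, so that the witness found in $X(M)=X(A)$ still defines $b'$.
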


\begin{proof}
    By Lemma \ref{lem: finite type iterated is dg}, a normal iterated definable Galois extension of finite type $B$ of $A$ relative to $X$ is a definable Galois extension of finite type. Then by Lemma \ref{lem: IDG form directed system}, the family of definable Galois extensions of finite type $B$ of $A$ relative to $X$ form a directed system. It is clear that their union is $A^{XDG_1}$. The result then follows from Lemma \ref{lem: colim}. 
\end{proof}

As in the PV setting of the previous section, to obtain a colimit representation of $\HH^{1}_{\defin}(A^{XDG_\infty}/A, G(A^{XDG_\infty}))$ we must then work with the family of normal closures of iterated definable Galois extensions of finite type $B$ of $A$ relative to $X$.

\begin{theorem}\label{thm: idg colim}
    Let $G$ be an $A$ definable group living on an $A$-definable set $X$. Assume $X(M)=X(A)$. Then $$\varinjlim \HH^{1}_{\defin}(B/A, G(B)) \cong \HH^{1}_{\defin}(A^{XDG_\infty}/A, G(A^{XDG_\infty}))$$ where the limit is taken over the family of normal closures of iterated definable Galois extensions of finite type $B$ of $A$ relative to $X$.
\end{theorem}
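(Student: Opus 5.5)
The plan is to reduce the statement to Lemma \ref{lem: colim}, applied with $B = A^{XDG_\infty}$ and with $\{B_i\}$ the family of normal closures (inside $M$) of iterated definable Galois extensions of finite type of $A$ relative to $X$. This mirrors the proof of Proposition \ref{prop: colim ipv} in the differential setting. Three hypotheses of Lemma \ref{lem: colim} must be checked:
\begin{enumerate}
\item $A^{XDG_\infty}$ is normal in $M$ over $A$;
\item each member of the family is a normal subset of $A^{XDG_\infty}$;
\item the family is directed under inclusion and its union is $A^{XDG_\infty}$.
\end{enumerate}

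First, $X(M)=X(A)$ gives $X(M)=X(\mcl(A))$, so Proposition \ref{prop: struct of M} makes every $A^{XDG_n}$ normal; a union of normal sets is normal, which gives (1).

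For (2), let $B$ be an iterated definable Galois extension of finite type, $B=\dcl(A,b)$ for a finite tuple $b$ obtained by concatenating the successive generators. Its normal closure is $\dcl$ of all $\aut(M/A)$-conjugates of $b$, which is normal by construction. It lies in $A^{XDG_\infty}$ because each conjugate $\sigma(B)$ is again iterated definable Galois of finite type: $\sigma$ carries the witnessing chain to a chain with the same strong internality and weak orthogonality properties over the conjugate base. Since $A^{XDG_\infty}$ is normal, it contains every conjugate of $b$.

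For (3), the union of the family is all of $A^{XDG_\infty}$. Indeed, an element $a\in A^{XDG_n}$ lies in a finite iterated chain: $a$ is in some finite-type definable Galois extension of a finitely generated piece of $A^{XDG_{n-1}}$, and by induction on $n$ that piece lies in an iterated extension of finite type, which can be extended by one more step. For directedness, given two normal closures $N(B)$ and $N(C)$, Lemma \ref{lem: IDG form directed system} shows $\dcl(B,C)$ is iterated of finite type, and its normal closure contains both $N(B)$ and $N(C)$.

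With (1)--(3) in hand, Lemma \ref{lem: colim} gives the isomorphism immediately.

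The main obstacle is the extension step inside the exhaustion argument in (3). Strictly, $A^{XDG_n}$ is built from definable Galois extensions of the infinite set $A^{XDG_{n-1}}$. A generator $c$ with $\tp(c/A^{XDG_{n-1}})$ strongly internal and weakly orthogonal to $X$ has a type isolated over some finite part $D=\dcl(A,d)$. I would first show that $\tp(c/D')$ inherits strong internality, for suitable finite-type iterated $D'\supseteq D$ in $A^{XDG_{n-1}}$, by picking $D'$ to contain the finitely many parameters used in the $\dcl$-witnessing formula; weak orthogonality is automatic via Fact \ref{weak orth fact} since $X(M)=X(A)$. This is exactly where $X(M)=X(A)$ is needed, just as in Lemma \ref{lem: IDG form directed system}. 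A secondary subtlety is that Lemma \ref{lem: colim} needs the transition maps from Theorem \ref{thm: SES}, which require the $B_i$ to be normal; this is why the family consists of normal closures rather than the iterated extensions themselves.
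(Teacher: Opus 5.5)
Your proposal is correct and takes the paper's route: apply Lemma~\ref{lem: colim} to the family of normal closures of iterated definable Galois extensions of finite type, with directedness coming from Lemma~\ref{lem: IDG form directed system}. You go further than the paper by checking normality and the exhaustion $\bigcup = A^{XDG_\infty}$, which the paper simply calls clear; your isolation-plus-compactness sketch for that exhaustion step, using that $X(M)=X(A)$ makes weak orthogonality automatic, is sound.
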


\begin{proof}
    By Lemma \ref{lem: IDG form directed system}, the family of normal closures of iterated definable Galois extensions of finite type $B$ of $A$ relative to $X$ form a directed system. It is clear that their union is $A^{XDG_\infty}$. The result then follows from Lemma \ref{lem: colim}. 
\end{proof}

We then propose the following definition of a bounded set of parameters $A$.  

\begin{definition}
    Let $X$ be an $A$-definable set satisfying $X(A)= X(M)$. Let $G$ be an $A$-definable group living on $X$.  The set of parameters $A$ is said to be bounded with respect to $G$ and $X$ if there exists an iterated definable Galois extension of finite type relative to $X$ whose normal closure $B$ satisfies $$\HH^{1}_{\defin}(B/A, G(B)) \cong \HH^{1}_{\defin}(A^{XDG_\infty}/A, G(A^{XDG_\infty})).$$
\end{definition}

This definition is a generalization of Definition \ref{def: diff bdd}.  
However by Propositions \ref{thm: mcl points} and \ref{prop: idg colim}, a general colimit formula and a definition of boundedness for all of $\HH^1_{\defin}(M/A,G(M))$, i.e., with reference to the extension $M/A$, would require and understanding of the difference between $\HH^{1}_{\defin}(A^{XDG_\infty}/A, G(A^{XDG_\infty}))$ and  $\HH^{1}_{\defin}(\mcl(A)/A, G(\mcl(A)))$. This is sensitive to the theory chosen and it is false in general that they are isomorphic:
        
In $DCF_0$ and the linear case of $\DCF_{0,m}$, the results of \cite{PILLAY2017809}, \cite{Minchenko_2019}, and \cite{Chatzidakis2017GeneralizedPE} give the isomorphism in various cases. We note that difference algebra falls outside of the model theoretic totally transcendental setting of this paper. However, results of \cite{Chatzidakis2017GeneralizedPE} and \cite{Bachmayr_Wibmer_2026} give appropriate isomorphisms in this setting. Generalizing the definable cohomological machinery to coincide with the Galois cohomology measuring difference algebraic torsors defined by Bachmayr and Wibmer is also left to future work.

%\newpage
\bibliography{main.bib}{}
\bibliographystyle{plain}
\nocite{*}

\end{document}